\documentclass[12pt]{article}

\usepackage{geometry}
\geometry{a4paper,
total={170mm,260mm},
left=20mm, right=20mm,top=15mm
}

\usepackage{amsthm}
\usepackage{amssymb}
\usepackage{amsmath}
\usepackage[normalem]{ulem}
\usepackage{graphicx}
\usepackage[colorlinks=true,
linkcolor=webgreen,
filecolor=webbrown,
citecolor=webgreen]{hyperref}

\usepackage{color}
\usepackage{xcolor}

\definecolor{webgreen}{rgb}{0,.5,0}
\definecolor{webbrown}{rgb}{.6,0,0}
\definecolor{red}{rgb}{1,0,0}

\theoremstyle{plain}
\newtheorem{theorem}{Theorem}[section]
\newtheorem{corollary}[theorem]{Corollary}
\newtheorem{lemma}[theorem]{Lemma}

\theoremstyle{definition}

\newtheorem{example}[theorem]{Example}

\theoremstyle{remark}
\newtheorem{remark}[theorem]{Remark}

\usepackage{booktabs}

\usepackage{float}

\usepackage[pageref]{backref}
\renewcommand*{\backref}[1]{}
\renewcommand*{\backrefalt}[4]{%
\ifcase #1 (Not cited.)%
\or (Cited on page~#2.)%
\else (Cited on pages~#2.)%
\fi}






\allowdisplaybreaks

\numberwithin{equation}{section}

\begin{document}

\title{General convolution sums involving Fibonacci $m$-step numbers}

\date{}
\author{Robert Frontczak${}^{1}$ and Karol Gryszka${}^{2}$}
\maketitle
\begin{center}
\footnotesize
\textsc{
${}^{1}$Independent Researcher \\ Reutlingen, Germany \\
${}^{2}$Corresponding author \\ Institute of Mathematics \\ University of the National Education Commission, Krakow \\
    Podchor\c{a}\.{z}ych 2, 30-084 Krak{\'o}w, Poland}\par\nopagebreak
  \textit{E-mail addresses}: \texttt{robert.frontczak@web.de} \\ \texttt{karol.gryszka@uken.krakow.pl}
\end{center}

\begin{abstract}
In this paper, using a generating function approach, we derive several new convolution
sum identities involving Fibonacci $m$-step numbers. As special instances of the results
derived herein, we will get many new and known results involving Fibonacci, Tribonacci, Tetranacci and Pentanacci numbers.
In addition, we establish some general results providing insights into the inner structure of such convolutions.
Finally, some mixed convolutions involving Fibonacci $m$-step numbers and Jacobsthal and Pell numbers will be stated.
\end{abstract}

\indent Keywords: Convolution, Fibonacci $m$-step number, generating function.

\indent MSC 2020: 05A15, 11B37, 11B39.

\section{Motivation and Preliminaries}

The motivation for writing this paper comes from three recently published papers by Dresden and his collaborators:
Dresden and Wang \cite{Dresden1,Dresden3} and Dresden and Tulskikh \cite{Dresden2}.
In these papers, convolutions involving important number sequences like Fibonacci numbers $F_n$, Lucas numbers $L_n$,
Pell numbers $\mathcal{P}_n$, Jaconsthal numbers $J_n$, Tribonacci numbers $T_n$, and others are studied.
Dresden and Wang close their article \cite{Dresden3} with a short discussion of the two seemingly unrelated convolutions
\begin{equation}\label{eq:conv_sum_JF}
\sum_{j=0}^n J_j F_{n-j} = J_{n+1} - F_{n+1} \quad (\mbox{see}\,\,\cite{Koshy})
\end{equation}
and
\begin{equation}\label{F-T-conv}
\sum_{j=0}^n T_j F_{n-j} = T_{n+2} - F_{n+2} \quad (\mbox{see}\,\,\cite{BenBook,Frontczak2018,Frontczak2019}).
\end{equation}
A third example of this nature is the following convolution
\begin{equation}\label{P-F-conv}
\sum_{j=0}^n \mathcal{P}_j F_{n-j} = \mathcal{P}_{n} - F_{n} \quad (\mbox{see}\,\,\cite{Greubel,Seiffert}).
\end{equation}
A hidden link between \eqref{F-T-conv} and \eqref{P-F-conv} will be revealed later. At this point we can clearly see 
that identities \eqref{eq:conv_sum_JF}--\eqref{P-F-conv} look suspiciously similar.
Dresden and Wang \cite{Dresden3} ask "... if there are other general convolution formulas waiting to be discovered."?
The answer to this questions is "Definitely Yes!". Building on a generating function approach
(see also \cite{Frontczak2019,Komatsu2018,Komatsu2019,KomatsuLi2019,Prodinger2019})
for Fibonacci $m$-step numbers we will prove many new convolution identities, recovering known results as special cases, 
including the identities presented above.

We note that the generating function approach is not a novel discovery. However, the referenced articles show
that this approach has received a lot of attention in the recent years. In this context, we mention the latest article by
Gessel and Kar \cite{GesselKar} who provide an extensive analysis of (binomial) convolutions of sequences having rational generating functions.

As for the Fibonacci $m$-step sequences, there seems to be a lack of a general approach to some of the convolution identities involving these sequences and other recurrence relations, mentioned at the beginning of the article and many more. Our response to this is a step forward towards a better understanding of the general inner structure of these convolutions. To keep things coherent and focused on a single topic,
this article is devoted exclusively to Fibonacci $m$-step numbers. Thus, identities related to Lucas ($m$-step) sequences
(see for example \cite{Dresden1, Dresden2}) will be the subject of another study.

We start with a definition. The $m$-step Fibonacci numbers are defined for all $m\geq 1$ by 
\begin{equation*}
F_{1}^{(m)} = 1,\quad \text{and}\quad F_n^{(m)} = 0 \quad\mbox{for all}\quad n=-(m-2),\ldots,0
\end{equation*}
($F_0^{(1)}=0$) and for all $n\geq 2$,
\begin{equation*}
F_n^{(m)} = \sum_{j=1}^{m} F_{n-j}^{(m)}.
\end{equation*}
Their arithmetic structure was studied in a recent article by the second author \cite{Gryszka2022b}.
The ordinary generating function for $m$-step Fibonacci numbers is given by
\begin{equation*}
F^{(m)}(x) = \frac{x}{1-x-x^2-\cdots - x^m}.
\end{equation*}

We introduce the following notation for a selection of particular Fibonacci $m$-step numbers.
Let $F_n^{(2)}=F_n$, $F_n^{(3)}=T_n$, $F_n^{(4)}=Q_n$, and $F_n^{(5)}=P_n$ denote the Fibonacci, Tribonacci, Tetranacci, and Pentanacci numbers, respectively. So, the following can be stated.
\begin{align*}
    F_n&=F_{n-1}+F_{n-2}, & &F_0=0, F_1=1,\\
    T_n&=T_{n-1}+T_{n-2}+T_{n-3}, & & T_0=0, T_1=T_2=1,\\
    Q_n&=Q_{n-1}+Q_{n-2}+Q_{n-3}+Q_{n-4}, & & Q_0=0, Q_1=Q_2=1, Q_3=2,\\
    P_n&=P_{n-1}+P_{n-2}+P_{n-3}+P_{n-4}+P_{n-5}, & &P_0=0, P_1=P_2=1, P_3=2, P_4=4,
\end{align*}
Let us denote by $F(x)$, $T(x)$, $Q(x)$, and $P(x)$ the ordinary generating functions of these sequences, respectively. Hence, we have
$$F(x)=F^{(2)}(x),\quad T(x)=F^{(3)}(x), \quad Q(x)=F^{(4)}(x)\quad \text{and}\quad P(x)=F^{(5)}(x).$$

These special candidates will be used later to highlight particular cases of the results obtain in this paper.

\section{General convolution identities with Fibonacci $m$-step numbers}

This section is for general formulas for Fibonacci $m$-step numbers. We consider several forms of formulas.
First, we investigate convolutions of $F_n^{(m)}$ with $F_n^{(m')}$ for $m\neq m'$.
Then we show a few somewhat curious identities with convolutions exhibiting what we call a ''switch effect''.
One such convolution is
\begin{equation*}
\sum_{j=0}^{n} F_{j}\big (T_{n-j} - Q_{n-j}\big ) = \sum_{j=0}^{n-1} \big ( F_{j} - T_{j}\big ) Q_{n-1-j},
\end{equation*}
where ''switching'' is referred to ''switch the parentheses''. Finally, we consider convolutions those steps differ by $2$.

\subsection{Mixed convolution}

\begin{theorem}\label{th:general_conv_m_step}
For all $n\geq m$ we have
\begin{equation}\label{eq:simplest_conv_sum_in_the_article}
    \sum_{j=0}^{n-m} F_j^{(m)} F_{n-m-j}^{(m+1)}= F_n^{(m+1)} - F_n^{(m)}.
\end{equation}
\end{theorem}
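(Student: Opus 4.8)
The plan is to pass to ordinary generating functions. Since the left-hand side of \eqref{eq:simplest_conv_sum_in_the_article} is a Cauchy product, it is the coefficient of a power of $x$ in a product of the two rational generating functions recalled above, so the whole statement should reduce to a single identity between formal power series.

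Concretely, write $D_k(x) = 1 - x - x^2 - \cdots - x^k$, so that $F^{(m)}(x) = x/D_m(x)$ and $F^{(m+1)}(x) = x/D_{m+1}(x)$. Setting $N = n-m \ge 0$, the sum $\sum_{j=0}^{n-m} F_j^{(m)} F_{n-m-j}^{(m+1)}$ equals the coefficient of $x^{N}$ in $F^{(m)}(x)\,F^{(m+1)}(x)$, that is, the coefficient of $x^{n}$ in $x^{m}F^{(m)}(x)\,F^{(m+1)}(x)$. Hence \eqref{eq:simplest_conv_sum_in_the_article} is equivalent to the formal identity
\[
x^{m}\, F^{(m)}(x)\, F^{(m+1)}(x) \;=\; F^{(m+1)}(x) - F^{(m)}(x).
\]

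The key observation is that the denominators telescope: $D_m(x) - D_{m+1}(x) = x^{m+1}$. Using this,
\[
F^{(m+1)}(x) - F^{(m)}(x) = \frac{x}{D_{m+1}(x)} - \frac{x}{D_m(x)} = \frac{x\bigl(D_m(x) - D_{m+1}(x)\bigr)}{D_m(x)\,D_{m+1}(x)} = \frac{x^{m+2}}{D_m(x)\,D_{m+1}(x)},
\]
while on the other hand $x^{m}F^{(m)}(x)\,F^{(m+1)}(x) = x^{m}\cdot \dfrac{x}{D_m(x)}\cdot \dfrac{x}{D_{m+1}(x)} = \dfrac{x^{m+2}}{D_m(x)\,D_{m+1}(x)}$, so the two sides coincide. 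Comparing coefficients of $x^{n}$ then yields \eqref{eq:simplest_conv_sum_in_the_article}; the hypothesis $n \ge m$ is needed only to make the convolution sum well-defined (for $n < m$ both sides vanish).

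I do not anticipate a genuine obstacle: the only delicate points are the bookkeeping of the index shift by $m$ between the convolution (a coefficient of $x^{n-m}$) and the quantities $F_n^{(m)}$, $F_n^{(m+1)}$ (coefficients of $x^{n}$), and — if one wishes to be scrupulous about the stated range — a quick check of the boundary cases $m \le n \le m+1$, where both sides equal $0$.
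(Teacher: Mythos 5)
Your proof is correct and takes essentially the same route as the paper: both reduce the claim to the functional equation $F^{(m+1)}(x)-F^{(m)}(x)=x^{m}F^{(m)}(x)F^{(m+1)}(x)$ and then compare coefficients of $x^{n}$. The only difference is cosmetic — you obtain that equation by a direct common-denominator computation with $D_m(x)-D_{m+1}(x)=x^{m+1}$, whereas the paper rearranges the relation $1-x-\cdots-x^{m}=x/F^{(m)}(x)$ after subtracting $x^{m+1}$.
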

\begin{proof}
Notice that
\begin{equation*}
    1-x-x^2-\cdots-x^{m}=\frac{x}{F^{(m)}(x)}.
\end{equation*}
Subtracting $x^{m+1}$ and rearranging we obtain
\begin{equation*}
    1-x-x^2-\cdots - x^{m} - x^{m+1} = \frac{x-x^{m+1}F^{(m)}(x)}{F^{(m)}(x)},
\end{equation*}
and thus
\begin{equation*}
    \frac{F^{(m)}(x)}{x-x^{m+1}F^{(m)}(x)} = \frac{F^{(m+1)}(x)}{x}.
\end{equation*}
This gives
\begin{equation}\label{eq:F^m+1-F^m-x^mF^m+1F^m}
    F^{(m+1)}(x) - F^{(m)} (x) = x^m F^{(m)}(x)F^{(m+1)}(x)
\end{equation}
or
\begin{align*}
   \sum_{n=0}^\infty (F_n^{(m+1)} - F_n^{(m)}) x^n
		&= x^m \Big (\sum_{n=0}^\infty F_n^{(m)}x^n\Big )\Big (\sum_{n=0}^\infty F_n^{(m+1)}x^n\Big ) \\
    &= \sum_{n=0}^\infty \sum_{j=0}^n F_j^{(m)} F_{n-j}^{(m+1)} x^{n+m} \\
    &= \sum_{n=m}^\infty \sum_{j=0}^{n-m} F_j^{(m)} F_{n-m-j}^{(m+1)} x^{n}.
\end{align*}
This proves the theorem as for all $n<m$ we have
\begin{equation*}
    \sum_{j=0}^{n-m} F_j^m F_{n-m-j}^{m+1} = 0 \quad\mbox{and}\quad F_n^{m+1} - F_n^m = 0.
\end{equation*}
\end{proof}

As a corollary, we obtain Theorem 2.1 from \cite{Frontczak2018} (identity \eqref{eq:T=F+FT} below) and many more.
\begin{corollary}\label{cor:convFT/TQ/QP}
Let $n\geq 0$ be an integer. Then
\begin{align}
	 \label{eq:T=F+FT} \sum\limits_{j=0}^{n} F_j T_{n-j} &= T_{n+2} - F_{n+2}, \\
   \label{eq:Q=T+TQ} \sum\limits_{j=0}^{n} T_j Q_{n-j} &= Q_{n+3} - T_{n+3},\\
   \label{eq:P=Q+QP} \sum\limits_{j=0}^{n} Q_j P_{n-j} &= P_{n+4} - Q_{n+4}.
\end{align}
\end{corollary}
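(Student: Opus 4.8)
The plan is to deduce Corollary~\ref{cor:convFT/TQ/QP} directly from Theorem~\ref{th:general_conv_m_step} by specialising the step parameter $m$ and then shifting indices. Recall the naming conventions $F_n^{(2)}=F_n$, $F_n^{(3)}=T_n$, $F_n^{(4)}=Q_n$, $F_n^{(5)}=P_n$. Setting $m=2$ in \eqref{eq:simplest_conv_sum_in_the_article} gives, for $n\geq 2$,
\begin{equation*}
\sum_{j=0}^{n-2} F_j T_{n-2-j} = T_n - F_n.
\end{equation*}
Now replace $n$ by $n+2$: for all $n\geq 0$ the right-hand side becomes $T_{n+2}-F_{n+2}$ and the left-hand side becomes $\sum_{j=0}^{n} F_j T_{n-j}$, which is exactly \eqref{eq:T=F+FT}. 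Identity \eqref{eq:Q=T+TQ} is the case $m=3$ shifted by $3$, and \eqref{eq:P=Q+QP} is the case $m=4$ shifted by $4$; in each case the theorem supplies the identity for $n\geq m$, and the index shift $n\mapsto n+m$ both removes the lower restriction (turning $n\geq m$ into $n\geq 0$) and produces the stated subscripts $n+2$, $n+3$, $n+4$.

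The only genuine thing to check is that the shifted identities actually hold for the small values $n=0,1,\dots$ that are \emph{not} covered by the hypothesis $n\geq m$ of the theorem — i.e.\ that nothing goes wrong at the boundary. But this is immediate: after the substitution $n\mapsto n+m$, the constraint $n\geq m$ of Theorem~\ref{th:general_conv_m_step} becomes $n+m\geq m$, that is $n\geq 0$, so every integer $n\geq 0$ is in fact covered. Hence no separate verification of initial cases is needed; one could optionally cross-check $n=0$ (e.g.\ $\sum_{j=0}^{0}F_0 T_0 = 0 = T_2 - F_2 = 1-1$) as a sanity check, but it is not logically required.

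There is no real obstacle here — the corollary is a pure specialisation. The one point deserving a word of care is bookkeeping of indices: it is easy to be off by one when going from ``for all $n\geq m$, $\sum_{j=0}^{n-m}F_j^{(m)}F_{n-m-j}^{(m+1)}=F_n^{(m+1)}-F_n^{(m)}$'' to the clean form with summation range $0$ to $n$. I would therefore write out the substitution $n\mapsto n+m$ explicitly once (for general $m$) and then simply read off the three cases $m=2,3,4$, rather than treating each line separately.

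\begin{proof}
Fix $m\in\{2,3,4\}$ and apply Theorem~\ref{th:general_conv_m_step}: for every integer $k\geq m$,
\begin{equation*}
\sum_{j=0}^{k-m} F_j^{(m)} F_{k-m-j}^{(m+1)} = F_k^{(m+1)} - F_k^{(m)}.
\end{equation*}
Substituting $k=n+m$ with $n\geq 0$ (which is equivalent to $k\geq m$) yields
\begin{equation*}
\sum_{j=0}^{n} F_j^{(m)} F_{n-j}^{(m+1)} = F_{n+m}^{(m+1)} - F_{n+m}^{(m)}, \qquad n\geq 0.
\end{equation*}
Taking $m=2$ gives $\sum_{j=0}^{n} F_j T_{n-j} = T_{n+2} - F_{n+2}$, which is \eqref{eq:T=F+FT}; taking $m=3$ gives $\sum_{j=0}^{n} T_j Q_{n-j} = Q_{n+3} - T_{n+3}$, which is \eqref{eq:Q=T+TQ}; and taking $m=4$ gives $\sum_{j=0}^{n} Q_j P_{n-j} = P_{n+4} - Q_{n+4}$, which is \eqref{eq:P=Q+QP}.
\end{proof}
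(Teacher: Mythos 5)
Your proposal is correct and follows exactly the paper's route: the paper's proof is the one-line instruction to set $m=2,3,4$ in Theorem~\ref{th:general_conv_m_step}, and you simply make the implicit index shift $n\mapsto n+m$ explicit, which is sound bookkeeping.
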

\begin{proof}
   Set $m=2$, $m=3$ and $m=4$, respectively, in Theorem \ref{th:general_conv_m_step}.
\end{proof}

It is interesting that Theorem \ref{th:general_conv_m_step} can be generalized further. Let $p\geq 0$ be an integer. Then
we have the following result.

\begin{theorem}\label{thm:gen_m+p_conv}
Let $p\geq 0$ be an integer. For all $n\geq m$ we have the identity
\begin{equation}\label{eq:sumF^mF^m+p=F^m+p-F^m}
 \sum_{k=0}^{p-1} \sum_{j=0}^{n-m-k} F_j^{(m)} F_{n-m-k-j}^{(m+p)} = F_n^{(m+p)} - F_n^{(m)}.
\end{equation}
\end{theorem}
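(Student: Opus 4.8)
The plan is to mimic the generating-function argument used for Theorem~\ref{th:general_conv_m_step}, but now comparing $F^{(m)}(x)$ with $F^{(m+p)}(x)$ rather than with $F^{(m+1)}(x)$. Starting from the identity $1-x-x^2-\cdots-x^m = x/F^{(m)}(x)$, I would subtract the block $x^{m+1}+x^{m+2}+\cdots+x^{m+p}$ to pass from the denominator of $F^{(m)}$ to the denominator of $F^{(m+p)}$. This gives
\begin{equation*}
\frac{x}{F^{(m+p)}(x)} = \frac{x}{F^{(m)}(x)} - (x^{m+1}+x^{m+2}+\cdots+x^{m+p}),
\end{equation*}
and after clearing denominators and rearranging, an identity of the shape
\begin{equation*}
F^{(m+p)}(x) - F^{(m)}(x) = \big(x^m + x^{m+1} + \cdots + x^{m+p-1}\big)\,F^{(m)}(x)\,F^{(m+p)}(x),
\end{equation*}
which is the exact analogue of \eqref{eq:F^m+1-F^m-x^mF^m+1F^m} with the single factor $x^m$ replaced by the geometric sum $\sum_{k=0}^{p-1} x^{m+k}$.

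Next I would extract coefficients. Writing $F^{(m)}(x) = \sum F_n^{(m)} x^n$ and $F^{(m+p)}(x) = \sum F_n^{(m+p)} x^n$, the right-hand side becomes
\begin{equation*}
\sum_{k=0}^{p-1} x^{m+k} \Big(\sum_{n\ge 0} F_n^{(m)} x^n\Big)\Big(\sum_{n\ge 0} F_n^{(m+p)} x^n\Big)
= \sum_{k=0}^{p-1} \sum_{n\ge 0} \Big(\sum_{j=0}^{n} F_j^{(m)} F_{n-j}^{(m+p)}\Big) x^{n+m+k}.
\end{equation*}
Reindexing $n+m+k \mapsto n$ in each inner sum turns the coefficient of $x^n$ into $\sum_{k=0}^{p-1}\sum_{j=0}^{n-m-k} F_j^{(m)} F_{n-m-k-j}^{(m+p)}$, and comparing with the left-hand side $\sum_n (F_n^{(m+p)} - F_n^{(m)}) x^n$ yields \eqref{eq:sumF^mF^m+p=F^m+p-F^m} for every $n$. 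For $n < m$ both sides vanish term by term (the inner index ranges are empty, and $F_n^{(m+p)} = F_n^{(m)}$ since the initial conditions agree in that range), so the stated range $n\ge m$ is just where the identity is non-trivial; I would note the $p=0$ case is the vacuous $0=0$ and $p=1$ recovers Theorem~\ref{th:general_conv_m_step}.

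The only genuinely delicate point is the algebraic manipulation of the generating functions: one must check that subtracting $x^{m+1}+\cdots+x^{m+p}$ from $x/F^{(m)}(x)$ really produces $x/F^{(m+p)}(x)$, i.e. that the denominator $1-x-\cdots-x^{m+p}$ emerges correctly, and then that clearing denominators gives the factor $x^m+\cdots+x^{m+p-1}$ rather than something else. This is a short and routine computation — essentially $\big(x/F^{(m)}\big) - \big(x/F^{(m+p)}\big) = x^{m+1}+\cdots+x^{m+p} = x^m(x+\cdots+x^p)$, then multiply through by $F^{(m)}F^{(m+p)}/x$ — but it is where an off-by-one in the exponents could creep in, so I would carry it out carefully. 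Everything after that (multiplying power series, swapping the finite sum over $k$ with the Cauchy product, reindexing) is formal and parallels the proof of Theorem~\ref{th:general_conv_m_step} verbatim.
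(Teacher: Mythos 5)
Your proposal is correct and follows essentially the same route as the paper: the paper also obtains the functional equation $F^{(m+p)}(x)-F^{(m)}(x)=\big(\sum_{k=0}^{p-1}x^{m+k}\big)F^{(m)}(x)F^{(m+p)}(x)$ by removing the block $x^{m+1}+\cdots+x^{m+p}$ from $x/F^{(m)}(x)$, and then extracts coefficients exactly as in Theorem~\ref{th:general_conv_m_step}. Your explicit coefficient comparison and the remarks on $n<m$ and the cases $p=0,1$ are just a more detailed write-up of the same argument.
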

\begin{proof}
This follows from the relation
\begin{equation*}
    \frac{x}{F^{(m+p)}(x)} = \frac{x-\big (\sum_{k=1}^p x^{m+k}\big ) F^{(m)}(x)}{F^{(m)}(x)}
\end{equation*}
or equivalently
\begin{equation*}
    F^{(m+p)}(x) - F^{(m)}(x) = \Big (\sum_{k=1}^p x^{m+k-1}\Big ) F^{(m)}(x) F^{(m+p)}(x).
\end{equation*}
The remaining part of the proof is similar to the proof of Theorem \ref{th:general_conv_m_step}.
\end{proof}

When $p=1$ then we get Theorem \ref{th:general_conv_m_step}. When $p=2$ then we get the following.
\begin{corollary}\label{cor:Fm+2-Fm=mixed}
For all $n\geq m$ we have the following identity
\begin{equation*}
\sum_{j=0}^{n-m-1} F_j^{(m)} (F_{n-m-j}^{(m+2)} + F_{n-m-j-1}^{(m+2)}) = F_n^{(m+2)} - F_n^{(m)}.
\end{equation*}
\end{corollary}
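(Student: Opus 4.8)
The plan is to specialize Theorem~\ref{thm:gen_m+p_conv} with $p=2$ and then simplify the resulting double sum. Setting $p=2$ in \eqref{eq:sumF^mF^m+p=F^m+p-F^m} gives
\begin{equation*}
\sum_{k=0}^{1} \sum_{j=0}^{n-m-k} F_j^{(m)} F_{n-m-k-j}^{(m+2)} = F_n^{(m+2)} - F_n^{(m)},
\end{equation*}
so the left-hand side is the sum of the $k=0$ term $\sum_{j=0}^{n-m} F_j^{(m)} F_{n-m-j}^{(m+2)}$ and the $k=1$ term $\sum_{j=0}^{n-m-1} F_j^{(m)} F_{n-m-1-j}^{(m+2)}$. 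The goal is to merge these into the single stated sum $\sum_{j=0}^{n-m-1} F_j^{(m)}\bigl(F_{n-m-j}^{(m+2)} + F_{n-m-j-1}^{(m+2)}\bigr)$.

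The key step is to reconcile the ranges of the two inner sums. In the $k=0$ sum the index $j$ runs up to $n-m$, whereas in the stated combined sum it runs only up to $n-m-1$; the discrepancy is the single term $j = n-m$ in the $k=0$ sum, namely $F_{n-m}^{(m)} F_{0}^{(m+2)}$. But $F_0^{(m+2)} = 0$ by the defining initial conditions of the $m$-step Fibonacci numbers (for $m+2 \ge 2$, the value $F_0^{(m+2)}$ is among the zero initial terms), so this term vanishes and we may freely lower the upper limit of the $k=0$ sum to $n-m-1$ without changing its value. After this adjustment, both inner sums have index $j$ ranging over $0 \le j \le n-m-1$, and combining them termwise yields exactly
\begin{equation*}
\sum_{j=0}^{n-m-1} F_j^{(m)}\bigl(F_{n-m-j}^{(m+2)} + F_{n-m-1-j}^{(m+2)}\bigr) = F_n^{(m+2)} - F_n^{(m)},
\end{equation*}
which, noting $F_{n-m-1-j}^{(m+2)} = F_{n-m-j-1}^{(m+2)}$, is the claimed identity.

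I do not expect any genuine obstacle here: the only subtlety is the bookkeeping observation that the extra boundary term in the $k=0$ sum is $F_{n-m}^{(m)}F_0^{(m+2)} = 0$, which lets the two index ranges be aligned. Everything else is a direct substitution into Theorem~\ref{thm:gen_m+p_conv} together with a one-line reindexing, so the proof will be short. One should simply be careful to check that $n \ge m$ (the hypothesis inherited from Theorem~\ref{thm:gen_m+p_conv}) guarantees the upper limit $n-m-1$ is consistent with the convention that an empty sum is zero when $n = m$, in which case the right-hand side $F_m^{(m+2)} - F_m^{(m)} = 1 - 1 = 0$ as well.
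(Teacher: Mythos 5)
Your proof is correct and is exactly the paper's intended argument: the paper states this corollary as the immediate $p=2$ specialization of Theorem~\ref{thm:gen_m+p_conv}, with the same observation that the boundary term $F_{n-m}^{(m)}F_0^{(m+2)}=0$ lets the two inner sums be merged. The only quibble is in your closing sanity check: for $n=m$ one has $F_m^{(m+2)}=F_m^{(m)}=2^{m-2}$ when $m\geq 2$ (not $1$ in general), but the difference is still $0$, so the remark's conclusion stands.
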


Corollary \ref{cor:Fm+2-Fm=mixed} allows to obtain many different identities involving convolutions of two sequences.
We show the following two examples.
\begin{example}
Set $m=1$ in Corollary \ref{cor:Fm+2-Fm=mixed} to get
\begin{equation*}
T_n - 1 = \sum_{j=0}^{n-2} T_j + \sum_{j=0}^{n-3} T_j = 2 \sum_{j=0}^{n-3}T_j + T_{n-2}.
\end{equation*}
This gives (after replacing $n$ by $n+3$)
\begin{equation}\label{Tribonacci_sum}
\sum_{j=0}^{n} T_j = \frac{1}{2} (T_{n+3} - T_{n+1} - 1) = \frac{1}{2} (T_{n+2} + T_{n} - 1),
\end{equation}
which is a well-known partial sum formula (see for example \cite{BenBook, Frontczak2018b, Gryszka2022}).
\end{example}

\begin{example}
For any $n\geq 0$ we have
\begin{equation}\label{eq:FQ=Q-F}
  \sum_{j=0}^{n} F_{j+2} Q_{n-j} = Q_{n+3} - F_{n+3}.
\end{equation}
This follows from setting $m=2$ in Corollary \ref{cor:Fm+2-Fm=mixed} and calculating
\begin{align*}
Q_n - F_n &= \sum_{j=0}^{n-3} F_j (Q_{n-2-j}+Q_{n-3-j}) \\
&= \sum_{j=1}^{n-3} F_j Q_{n-2-j} + \sum_{j=0}^{n-3} F_j Q_{n-3-j} \\
&= \sum_{j=0}^{n-4} F_{j+1} Q_{n-3-j} + \sum_{j=0}^{n-3} F_j Q_{n-3-j} \\
&= \sum_{j=0}^{n-4} F_{j+2} Q_{n-3-j} \\
&= \sum_{j=0}^{n-3} F_{j+2} Q_{n-3-j}.
\end{align*}
\end{example}

Finally, we point out that when $m=1$ in Theorem \ref{thm:gen_m+p_conv} then we get the partial sum formula
for Fibonacci $m$-step numbers.
\begin{corollary}\label{cor:F_1+p-1=partialsum}
For any $n\geq p$,
    \begin{equation}\label{eq:Partial_sum_Fibonacci_m_step}
        F^{(p+1)}_n - 1 = \sum_{k=0}^{p-1}\sum\limits_{j=0}^{n-2-k} F_{j}^{(p+1)}.
    \end{equation}
\end{corollary}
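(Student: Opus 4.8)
The plan is to specialize Theorem~\ref{thm:gen_m+p_conv} to the single case $m=1$ and then to recognise the resulting ``mixed'' convolution as an ordinary partial sum. The one observation that makes this work is that the $1$-step Fibonacci sequence is eventually constant: from the recurrence $F_n^{(1)}=F_{n-1}^{(1)}$ valid for $n\geq 2$, together with $F_1^{(1)}=1$ and $F_0^{(1)}=0$, one gets $F_0^{(1)}=0$ and $F_n^{(1)}=1$ for every $n\geq 1$. Assuming $p\geq 1$ (the case $p=0$ being trivial), the hypothesis $n\geq p$ gives $n\geq 1=m$, so Theorem~\ref{thm:gen_m+p_conv} applies and yields
\begin{equation*}
\sum_{k=0}^{p-1}\sum_{j=0}^{n-1-k} F_j^{(1)} F_{n-1-k-j}^{(p+1)} = F_n^{(p+1)} - F_n^{(1)} = F_n^{(p+1)} - 1 .
\end{equation*}

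Next I would simplify the left-hand side termwise. Since $F_0^{(1)}=0$ while $F_j^{(1)}=1$ for $j\geq 1$, the $j=0$ summand drops out of each inner sum and every remaining summand reduces to its second factor, so that
\begin{equation*}
\sum_{j=0}^{n-1-k} F_j^{(1)} F_{n-1-k-j}^{(p+1)} = \sum_{j=1}^{n-1-k} F_{n-1-k-j}^{(p+1)} = \sum_{i=0}^{n-2-k} F_i^{(p+1)},
\end{equation*}
where the last equality is the substitution $i=n-1-k-j$. Summing this over $k=0,1,\dots,p-1$ produces exactly the double sum on the right-hand side of \eqref{eq:Partial_sum_Fibonacci_m_step}, and comparing with the previous display finishes the argument.

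I do not anticipate any real obstacle: the argument is a one-line specialization followed by a reindexing. The only things that warrant a word of care are the identification of $F_n^{(1)}$ with the constant sequence $1$ (this is what makes the term $F_n^{(m)}$ on the right collapse to $1$ and, more importantly, turns the inner convolution into a plain partial sum), and the possible empty inner sums, which occur precisely when $k=n-1$ and are harmless since then both $F_0^{(1)}F_0^{(p+1)}$ and $\sum_{i=0}^{-1}F_i^{(p+1)}$ equal $0$. One might optionally add the remark that taking $p=1$ recovers the classical identity $\sum_{j=0}^{n-2}F_j=F_n-1$ for the ordinary Fibonacci numbers $F_j=F_j^{(2)}$.
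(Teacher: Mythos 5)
Your proposal is correct and matches the paper's intended argument: the paper states Corollary~\ref{cor:F_1+p-1=partialsum} as the $m=1$ specialization of Theorem~\ref{thm:gen_m+p_conv}, and your verification (using $F_0^{(1)}=0$, $F_j^{(1)}=1$ for $j\geq 1$, then reindexing the inner convolution into a partial sum) is exactly the simplification the paper leaves implicit. No gaps worth noting.
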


In fact, rewriting \eqref{eq:Partial_sum_Fibonacci_m_step} to a slightly more convenient way we can get the following general identity.
\begin{theorem}[Partial sum formula for Fibonacci $m$-step numbers]\label{partial_sum}
   For any $m\geq 2$ we have
    \begin{equation}\label{eq:partial_sum_formuka}
        \sum\limits_{k=0}^n F_k^{(m)} = \frac{1}{m-1} \left(F_{n+m}^{(m)} - \sum_{k=1}^{m-2} k F_{n+k}^{(m)} - 1 \right).
    \end{equation}
\end{theorem}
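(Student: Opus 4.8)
The plan is to derive \eqref{eq:partial_sum_formuka} from Corollary \ref{cor:F_1+p-1=partialsum} by the substitution $p = m-1$, so that $F_n^{(p+1)} = F_n^{(m)}$, and then to transform the double sum on the right-hand side into a single sum with polynomial coefficients. Writing out \eqref{eq:Partial_sum_Fibonacci_m_step} with $p=m-1$ gives
\begin{equation*}
    F_n^{(m)} - 1 = \sum_{k=0}^{m-2} \sum_{j=0}^{n-2-k} F_j^{(m)}.
\end{equation*}
The key observation is that in this double sum the index $j$ runs over $0,1,\dots,n-2-k$, so for a fixed value of $j$ the outer index $k$ may range over all $k$ with $k \le n-2-j$, i.e. over $k = 0,1,\dots,\min(m-2,\,n-2-j)$. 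When $n$ is large compared to $m$ — and the final statement carries no hypothesis on $n$, so I will first establish it for $n$ large and then check small $n$ separately, or alternatively shift $n$ to absorb this — the term $F_j^{(m)}$ for each $j \le n-m$ is counted exactly $m-1$ times (once for each $k \in \{0,\dots,m-2\}$), while the boundary terms $F_j^{(m)}$ with $n-m < j \le n-2$ are counted fewer times. So the plan is to split the sum as $(m-1)\sum_{j=0}^{n-2} F_j^{(m)}$ minus a correction accounting for the under-counted boundary terms, then solve for $\sum_{j=0}^{n} F_j^{(m)}$ after an index shift.

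Concretely, swapping the order of summation yields
\begin{equation*}
    F_n^{(m)} - 1 = \sum_{j=0}^{n-2} (\min(m-1,\,n-1-j))\, F_j^{(m)}.
\end{equation*}
For $j \le n-m$ the multiplicity is $m-1$; for $j = n-m+\ell$ with $1 \le \ell \le m-2$ the multiplicity is $m-1-\ell$. Hence
\begin{equation*}
    F_n^{(m)} - 1 = (m-1)\sum_{j=0}^{n-2} F_j^{(m)} - \sum_{\ell=1}^{m-2} \ell\, F_{n-m+\ell}^{(m)}.
\end{equation*}
Solving for the partial sum gives $\sum_{j=0}^{n-2} F_j^{(m)} = \frac{1}{m-1}\bigl(F_n^{(m)} + \sum_{\ell=1}^{m-2}\ell\,F_{n-m+\ell}^{(m)} - 1\bigr)$, and replacing $n$ by $n+m$ (so that the index on the left becomes $n+m-2$, and then recognizing $F_{n+m}^{(m)}$ on the right, with $F_{n+\ell}^{(m)}$ terms) should land exactly on \eqref{eq:partial_sum_formuka} after renaming $\ell \mapsto k$ and verifying the sign: note the stated formula has $-\sum_{k=1}^{m-2} k F_{n+k}^{(m)}$, which matches once the substitution is carried out correctly, since $\sum_{j=0}^{n} F_j^{(m)}$ corresponds to shifting $n \mapsto n+m+? $ — I would pin down the exact shift by matching the top index, wanting $\sum_{k=0}^{n}$ on the left, hence replacing $n-2$ by $n$, i.e. $n \mapsto n+2$ in the intermediate identity, and then re-deriving with the boundary terms becoming $F_{n+2-m+\ell}^{(m)} = F_{n+k}^{(m)}$ under $k = 2-m+\ell$; a short reindexing check settles this.

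The main obstacle is bookkeeping rather than conceptual: getting the multiplicities of the boundary terms exactly right and choosing the index shift so that the final indices read $F_{n+m}^{(m)}$ and $F_{n+k}^{(m)}$ for $k=1,\dots,m-2$ as in \eqref{eq:partial_sum_formuka}. A secondary point is the range of validity: Corollary \ref{cor:F_1+p-1=partialsum} requires $n \ge p = m-1$, whereas Theorem \ref{partial_sum} is stated for all $n$ (with the tacit understanding $n \ge 0$); so I would either note that the resulting identity, being between polynomials in finitely many consecutive terms of a linear recurrence, extends automatically to all $n \ge 0$ once it holds for $n$ beyond the threshold, or simply check the handful of small cases $0 \le n < m-1$ directly against the closed form. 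As a sanity check, specializing to $m=2$ gives $\sum_{k=0}^n F_k = F_{n+2} - 1$ (empty correction sum), and to $m=3$ gives $\sum_{k=0}^n T_k = \frac12(T_{n+3} - T_{n+1} - 1)$, matching \eqref{Tribonacci_sum}.
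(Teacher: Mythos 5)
Your route is the same as the paper's: put $p=m-1$ in Corollary \ref{cor:F_1+p-1=partialsum}, regroup the double sum by counting multiplicities, and shift the index. Your multiplicity count is in fact correct, and gives
\begin{equation*}
F_n^{(m)}-1=(m-1)\sum_{j=0}^{n-2}F_j^{(m)}-\sum_{\ell=1}^{m-2}\ell\,F_{n-m+\ell}^{(m)}
=(m-1)\sum_{j=0}^{n-m}F_j^{(m)}+\sum_{\ell=1}^{m-2}(m-1-\ell)\,F_{n-m+\ell}^{(m)}.
\end{equation*}
The genuine gap is exactly the step you postpone (``a short reindexing check settles this''). Carrying it out, i.e.\ solving the second form for the partial sum and replacing $n$ by $n+m$, yields
\begin{equation*}
\sum_{k=0}^{n}F_k^{(m)}=\frac{1}{m-1}\left(F_{n+m}^{(m)}-\sum_{k=1}^{m-2}(m-1-k)\,F_{n+k}^{(m)}-1\right),
\end{equation*}
with boundary coefficients $m-1-k$ (equivalently, $k\,F_{n+m-1-k}^{(m)}$), not $k$ as in \eqref{eq:partial_sum_formuka}. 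For $m\geq 4$ these are genuinely different statements: with $m=4$, $n=2$ one has $\sum_{k=0}^{2}Q_k=2$, while the right-hand side of \eqref{eq:partial_sum_formuka} equals $\frac{1}{3}(Q_6-Q_3-2Q_4-1)=\frac{4}{3}$, whereas the displayed corrected form gives $\frac{1}{3}(Q_6-2Q_3-Q_4-1)=2$. So your assertion that the sign and indices ``match once the substitution is carried out correctly'' is unjustified and, for the statement as printed, false; your sanity checks at $m=2$ and $m=3$ cannot detect this, since there the correction sum is empty or consists of the single term with $k=m-1-k=1$.

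What your computation actually uncovers is that the printed theorem has the boundary coefficients reversed; the paper's own proof makes the same slip at the step $\sum_{k=0}^{m-2}\sum_{j=0}^{n-2-k}F_j^{(m)}=(m-1)\sum_{k=0}^{n-m}F_k^{(m)}+\sum_{k=1}^{m-2}kF_{n-m+k}^{(m)}$, where the coefficient of $F_{n-m+k}^{(m)}$ should be $m-1-k$. To have a complete argument you must perform the final reindexing explicitly and state which identity you have proved (the corrected one above), rather than asserting agreement with \eqref{eq:partial_sum_formuka}. Your remark on extending from $n\geq m-1$ to all $n\geq 0$ is fine in substance, but justify it by noting that both sides satisfy the linear recurrence with characteristic polynomial $(x-1)(x^{m}-x^{m-1}-\cdots-1)$, whose nonzero constant term lets you run it backwards, or simply verify the finitely many small values of $n$ directly.
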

\begin{proof}
    Substitute $p+1\to m$ in \eqref{eq:Partial_sum_Fibonacci_m_step} to get
    \begin{align*}
        F_n^{(m)} - 1 &= \sum\limits_{k=0}^{m-2} \sum\limits_{j=0}^{n-2-k} F_k^{(m)} \\
        &= (m-1)\sum\limits_{k=0}^{n-m} F_k^{(m)} + \sum\limits_{k=1}^{m-2} k F_{n-m+k}^{(m)}.
    \end{align*}
    Substituting $n\to n+m$ and rearranging the terms leads to the desired formula.
\end{proof}
We note that Theorem \ref{partial_sum} is not new. For instance, in \cite{Schumacher2019} the partial sum formula
is proved using induction but the proof is two pages long. A shorter proof of an equivalent version of the partial sum formula
is given by Dresden and Wang in \cite{Dresden1}. Second author has recently shown a proof without words of the identity in \cite{Gryszka2022}. Here, we obtained it as a corollary of a more general result.

We have seen from \eqref{eq:FQ=Q-F} that it is possible to derive a formula for the convolution of Fibonacci and Tetranacci numbers.
In the following, we gather all closed formulas for mixed convolutions of two Fibonacci $m$-step numbers with $2\leq m\leq 5$,
that are not present in Corollary \ref{cor:convFT/TQ/QP}.
\begin{corollary}\label{cor:conv_FQ_FP_TP}
Let $n\geq 0$ be an integer. Then
\begin{align}
\label{FQ=Q+Q-F} \sum\limits_{j=0}^{n} F_j Q_{n-j} &= Q_{n+1} + Q_{n-1} - F_{n+1}, \\
 \label{FP=P+P=F} \sum\limits_{j=0}^{n} F_j P_{n-j} &= \frac{1}{2}\big( P_{n+2} + P_{n-1} - F_{n+2} \big), \\
 \label{PT=P+P-P-P-T-T+P+P}
						\sum\limits_{j=0}^{n} P_j T_{n-j} &= \frac{1}{2}\big( P_{n+3} + P_{n+1} + P_{n-1} - T_{n+3} - T_{n+1} \big).
\end{align}
\end{corollary}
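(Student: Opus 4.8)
The plan is to prove all three identities in Corollary~\ref{cor:conv_FQ_FP_TP} uniformly by manipulating the functional equation~\eqref{eq:F^m+1-F^m-x^mF^m+1F^m} (or its $p=2$ analogue built into Corollary~\ref{cor:Fm+2-Fm=mixed}) so that one side becomes the product $F(x)G(x)$ with $G\in\{Q,P,T\}$, and then reading off coefficients of $x^n$. The common obstacle is that the ``natural'' relations coming from Theorem~\ref{th:general_conv_m_step} and Theorem~\ref{thm:gen_m+p_conv} produce convolutions of the shifted sequence $F_{j+2}^{(m)}$ or mixed sums of shifted terms, not the clean convolution $\sum_j F_jG_{n-j}$; so the arithmetic of re-indexing the shifts back down to $F_j$ must be done carefully, exactly as in the worked Example leading to~\eqref{eq:FQ=Q-F}.

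For~\eqref{FQ=Q+Q-F}, I would start from~\eqref{eq:FQ=Q-F}, namely $\sum_{j=0}^{n}F_{j+2}Q_{n-j}=Q_{n+3}-F_{n+3}$, and undo the index shift: writing $F_{j+2}=F_{j+1}+F_j$ twice, or more efficiently observing that $\sum_{j\ge 0}F_{j+2}x^j=(F(x)-x)/x^2$, one gets $\sum_{j}F_{j+2}Q_{n-j}$ as the coefficient of $x^{n+2}$ in $(F(x)-x)Q(x)=F(x)Q(x)-xQ(x)$. Hence $\sum_{j=0}^{n}F_jQ_{n-j}$ equals $[x^{n+2}]\big(xQ(x)+ (Q(x)-F(x))x^{3}\big)$-type expression; cleaning this up (using $F_n=0$ for $n<0$ and the small Tetranacci initial values) yields $Q_{n+1}+Q_{n-1}-F_{n+1}$. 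I expect the only delicate point is bookkeeping the low-index corrections, which one can either verify directly for $n=0,1,2$ or absorb into the generating-function computation by keeping all terms honestly.

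For~\eqref{FP=P+P=F}, the route is Corollary~\ref{cor:Fm+2-Fm=mixed} with $m=3$: there $T=F^{(3)}$ and $P=F^{(5)}=F^{(3+2)}$, giving $P_n-T_n=\sum_{j=0}^{n-4}T_j(P_{n-3-j}+P_{n-4-j})$. This is not yet what we want, so instead I would apply Theorem~\ref{thm:gen_m+p_conv} with $m=2,\ p=3$ to relate $P$ and $F$ directly: $\sum_{k=0}^{2}\sum_{j=0}^{n-2-k}F_j P_{n-2-k-j}=P_n-F_n$, i.e.\ $P(x)-F(x)=(x^{2}+x^{3}+x^{4})F(x)P(x)$ after dividing~\eqref{eq:sumF^mF^m+p=F^m+p-F^m}'s underlying identity by the appropriate power. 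The factor $x^2+x^3+x^4=x^2(1+x+x^2)$ is the obstruction to a pure convolution; but since $P$ satisfies its own recurrence, $(1+x+x^2)P(x)$ can be re-expressed, and in fact $\tfrac12$ appears precisely because $1+x+x^2$ contributes a factor that collapses two copies of the convolution. Concretely I would write $(x^2+x^3+x^4)P(x)$ in the form $x^2\cdot\tfrac12\big(\text{telescoping combination of }P\text{'s}\big)$ using $P(x)(1-x-x^2-x^3-x^4-x^5)=x$, extract coefficients, and land on $\tfrac12(P_{n+2}+P_{n-1}-F_{n+2})$.

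For~\eqref{PT=P+P-P-P-T-T+P+P}, the same machinery with $m=3,\ p=2$ gives $P(x)-T(x)=(x^{3}+x^{4})T(x)P(x)$, so $\sum_{j=0}^{n}T_jP_{n-j}=[x^{n}]T(x)P(x)=[x^{n+?}]\,(P(x)-T(x))/(x^3+x^4)$. The denominator $x^3(1+x)$ is inverted as a power series via $1/(1+x)=\sum(-1)^k x^k$, which alone would produce an alternating sum, not a finite closed form; the resolution is again to use the Pentanacci recurrence to rewrite $(1+x)P(x)$ (and $(1+x)T(x)$) and force cancellation, the $\tfrac12$ emerging from $1+x+x^2$ appearing after one such rewrite. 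The hard part of the whole corollary is exactly this last identity: matching the finite closed form on the right against the a~priori infinite series forces a somewhat intricate but mechanical generating-function simplification, and one must double-check the Pentanacci initial data $P_0,\dots,P_4$ and the small-$n$ cases to pin down the constant and shift terms. Once the generating-function identity $2\,T(x)P(x)=x^{-3}(\text{polynomial combination of }P\text{ and }T)$ is established, comparing coefficients of $x^n$ gives the stated formula and completes the proof.
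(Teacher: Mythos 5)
Your treatment of \eqref{FQ=Q+Q-F} matches the paper's: re-index the shifted convolution \eqref{eq:FQ=Q-F} and fix the summation range, and that part of your plan is sound. For \eqref{FP=P+P=F} and \eqref{PT=P+P-P-P-T-T+P+P} you also start from the same relations the paper uses, namely $P(x)-F(x)=(x^2+x^3+x^4)F(x)P(x)$ and $P(x)-T(x)=(x^3+x^4)T(x)P(x)$ from Theorem \ref{thm:gen_m+p_conv}, but the step that is supposed to close the computation is misidentified, and as written it does not go through. For \eqref{FP=P+P=F} you propose to rewrite $(x^2+x^3+x^4)P(x)$ via the Pentanacci recurrence; doing so gives $(x^2+x^3+x^4)P(x)=P(x)-x-xP(x)-x^5P(x)$, and substituting back yields $(1-x-x^5)F(x)P(x)=P(x)-F(x)+xF(x)$, i.e.\ only a linear recurrence for the convolution $\sum_j F_jP_{n-j}$, not a closed form: the coefficient of $F(x)P(x)$ remains a nonconstant polynomial no matter how often you reuse that recurrence. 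The collapse that produces the factor $2$ (hence the $\tfrac12$) must act on the \emph{Fibonacci} factor: since $(x+x^2)F(x)=F(x)-x$, one has $(x^2+x^3+x^4)F(x)=2x^2F(x)-x^3$, giving $P(x)-F(x)=2x^2F(x)P(x)-x^3P(x)$ (the paper's \eqref{eq:P-F=2x2FP-x3P}), from which \eqref{FP=P+P=F} is immediate; in the paper's proof this is exactly the merging of $\sum_j P_jF_{n-4-j}+\sum_j P_jF_{n-3-j}$ into $\sum_j P_jF_{n-2-j}$ by the Fibonacci recurrence.

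The same issue is more serious in your sketch of \eqref{PT=P+P-P-P-T-T+P+P}: expanding $1/(1+x)$ as a geometric series and hoping that the Pentanacci recurrence applied to $(1+x)P(x)$ ``forces cancellation'' is not a proof; for instance $(1+x)P(x)=2P(x)-x-(x^2+x^3+x^4+x^5)P(x)$ again leaves $T(x)P(x)$ multiplied by a nonconstant polynomial, and nothing guarantees the process terminates (also, no factor $1+x+x^2$ occurs in this identity). The missing idea, which is the paper's, is to multiply $P(x)-T(x)=x^3(1+x)T(x)P(x)$ by $1+x^2$ (equivalently, add the coefficient identity at $n$ and at $n+2$): then the factor $(1+x)(1+x^2)=1+x+x^2+x^3$ hits $T(x)$, and the \emph{Tribonacci} recurrence gives $(1+x+x^2+x^3)T(x)=2T(x)-x$, hence $(1+x^2)\bigl(P(x)-T(x)\bigr)=2x^3T(x)P(x)-x^4P(x)$, and comparing coefficients of $x^{n+3}$ gives exactly the stated closed form. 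So the concrete repair you need for the two harder identities is: apply the recurrence of the lower-step sequence to its own generating function (after the $1+x^2$ trick in the third case), rather than manipulating the Pentanacci series; with that fix your argument coincides with the paper's proof.
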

\begin{proof}
    Identity \eqref{FQ=Q+Q-F} follows from \eqref{eq:FQ=Q-F} after fixing the summation range.
		To show \eqref{FP=P+P=F} we use Theorem \ref{thm:gen_m+p_conv} with $m=2$ and $p=3$. Then
    \begin{align*}
        P_n-F_n&=\sum\limits_{j=0}^{n-4}P_j F_{n-4-j} + \sum\limits_{j=0}^{n-3} P_j F_{n-3-j} + \sum\limits_{j=0}^{n-2} P_j F_{n-2-j} \\
        &=2\sum\limits_{j=0}^{n-4} P_j F_{n-2-j} + P_{n-3}.
    \end{align*}
    It follows that
    \begin{equation*}
    \frac{1}{2}\big(P_{n+1}-P_{n-2}-F_{n+1}\big) = \sum\limits_{j=0}^{n-3} P_j F_{n-1-j} = \sum\limits_{j=0}^{n-1} P_j F_{n-1-j} - P_{n-2}
    \end{equation*}
    and this implies the identity.

    To show \eqref{PT=P+P-P-P-T-T+P+P} we use Theorem \ref{thm:gen_m+p_conv} with $p=2$ and $m=3$ and we get
    \begin{equation}\label{eq:P-T=PTT}
        P_n-T_n = \sum\limits_{j=0}^{n-4} P_j (T_{n-3-j} + T_{n-4-j})
    \end{equation}
    Then, using \eqref{eq:P-T=PTT} twice, second time with $n+2$ in place of $n$, and adding up we get
       $$P_{n+2}+P_n-T_{n+2}-T_n = 2\sum\limits_{j=0}^{n-4} P_j T_{n-1-j} + 2P_{n-3} + P_{n-2}.$$
    or, equivalently
    \begin{equation}\label{eq:P+P-P-2P-T-T=2PT}
        P_{n+2}+P_n-P_{n-2}-2P_{n-3}-T_{n+2}-T_n = 2\sum\limits_{j=0}^{n-4} P_j T_{n-1-j}.
    \end{equation}
    Notice that
    $$\sum\limits_{j=0}^{n-4} P_j T_{n-1-j} = \sum\limits_{j=0}^{n-1} P_j T_{n-1-j} - P_{n-2} - P_{n-3},$$
    thus substituting that to \eqref{eq:P+P-P-2P-T-T=2PT} and simplifying we get
    $$\sum\limits_{j=0}^{n-1} P_j T_{n-1-j} = \frac{1}{2}\big( P_{n+2} + P_{n} + P_{n-2} - T_{n+2} - T_{n} \big).$$
    This implies \eqref{PT=P+P-P-P-T-T+P+P}.
\end{proof}

The method used to obtain identities \eqref{FP=P+P=F} and \eqref{PT=P+P-P-P-T-T+P+P} will be explored in the general case in Section 4.

\subsection{Mixed convolutions with the "switch effect"}

In the following theorem we show a convolution-type formula where by switching a place of parentheses
and the minus sign with minor adjustment of indices, we obtain the equality. It is also important to mention
that the formula has an interesting connection to convolution sums of three sequences. The connection will be established
in the next two sections.

\begin{theorem}\label{main_thm3}
For all $n\geq 1$ and $m\geq 3$ we have
\begin{equation*}
\sum_{j=0}^{n}  F_j^{(m-2)} \big ( F_{n-j}^{(m)} - F_{n-j}^{(m-1)}\big )
= \sum_{j=0}^{n-1}  F_j^{(m)} \big ( F_{n-1-j}^{(m-1)} - F_{n-1-j}^{(m-2)}\big ).
\end{equation*}
\end{theorem}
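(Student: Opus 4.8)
The plan is to translate the claimed identity into a statement about generating functions and verify it by direct algebraic manipulation, exactly in the spirit of the proofs of Theorems~\ref{th:general_conv_m_step} and~\ref{thm:gen_m+p_conv}. Writing $A(x)=F^{(m-2)}(x)$, $B(x)=F^{(m-1)}(x)$ and $C(x)=F^{(m)}(x)$, the left-hand side is the coefficient sequence of $A(x)\big(C(x)-B(x)\big)$, while the right-hand side is the coefficient sequence of $x\,C(x)\big(B(x)-A(x)\big)$ (the extra factor $x$ accounting for the index shift $n\mapsto n-1$). So the theorem is equivalent to the single functional identity
\begin{equation}\label{eq:switch-gf}
A(x)\big(C(x)-B(x)\big) = x\,C(x)\big(B(x)-A(x)\big).
\end{equation}

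To establish \eqref{eq:switch-gf} I would use the two instances of identity~\eqref{eq:F^m+1-F^m-x^mF^m+1F^m} already proved in Theorem~\ref{th:general_conv_m_step}. Applying that identity with $m$ replaced by $m-2$ gives $B(x)-A(x)=x^{m-2}A(x)B(x)$, and applying it with $m$ replaced by $m-1$ gives $C(x)-B(x)=x^{m-1}B(x)C(x)$. Substituting both into \eqref{eq:switch-gf}, the left-hand side becomes $A(x)\cdot x^{m-1}B(x)C(x)$ and the right-hand side becomes $x\,C(x)\cdot x^{m-2}A(x)B(x)=x^{m-1}A(x)B(x)C(x)$, so the two sides coincide identically as formal power series. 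Equating coefficients of $x^n$ on both sides of \eqref{eq:switch-gf} then yields the stated convolution identity for every $n$.

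The last thing to check is the range of validity, namely that the coefficient identity as written (sums running to $n$ on the left and to $n-1$ on the right, valid for $n\ge 1$, $m\ge 3$) is exactly what falls out of equating coefficients. Here I would note that $A(0)=B(0)=C(0)=0$ since each generating function $F^{(k)}(x)=x/(1-x-\cdots-x^k)$ has no constant term; hence on the right-hand side of \eqref{eq:switch-gf} the factor $x\,C(x)$ already supplies a factor $x^2$, so the coefficient of $x^0$ and $x^1$ vanish, matching the left-hand side, and for $n\ge 1$ the convolution $\sum_{j=0}^{n-1}F_j^{(m)}(F_{n-1-j}^{(m-1)}-F_{n-1-j}^{(m-2)})$ is precisely $[x^n]\,x\,C(x)(B(x)-A(x))$ after reindexing. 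The constraint $m\ge 3$ is needed only so that $m-2\ge 1$ and all three sequences are defined.

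I do not expect a genuine obstacle here: the only mildly delicate point is bookkeeping of the index shift and the summation limits when passing between the power-series identity and the combinatorial statement, and the observation that the two "$F^{(k+1)}-F^{(k)}$" factorizations from \eqref{eq:F^m+1-F^m-x^mF^m+1F^m} are what make the "switch" work — the common value of both sides being $x^{m-1}A(x)B(x)C(x)$, which, incidentally, already foreshadows the promised reformulation of this identity as a convolution of three sequences.
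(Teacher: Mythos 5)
Your proposal is correct, and it sits inside the same generating-function framework as the paper: both arguments reduce the statement to the single functional equation
$F^{(m-2)}(x)\bigl(F^{(m)}(x)-F^{(m-1)}(x)\bigr)=x\,F^{(m)}(x)\bigl(F^{(m-1)}(x)-F^{(m-2)}(x)\bigr)$
and then compare coefficients of $x^n$, with the factor $x$ producing the shift to $n-1$ on the right. Where you differ is in how that equation is established. The paper derives it from scratch, via the polynomial decomposition $1-x-\cdots-x^m=(1-x-\cdots-x^{m-1})-x(1-x-\cdots-x^{m-2})+x(1-x-\cdots-x^{m-1})$, translated into a relation among the reciprocals of the generating functions. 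You instead reuse the already-proved identity \eqref{eq:F^m+1-F^m-x^mF^m+1F^m} twice, with $m$ replaced by $m-2$ and by $m-1$, and observe that both sides of the functional equation collapse to $x^{m-1}F^{(m-2)}(x)F^{(m-1)}(x)F^{(m)}(x)$. The paper's route is self-contained and does not invoke Theorem \ref{th:general_conv_m_step}; your route is shorter given what has already been proved, and it makes transparent that the ``switch'' identity is just two ways of reading off the same triple convolution --- precisely the structural point the authors make later via Theorem \ref{th:x^mF^mF^m+1F^p} and the remark that the switch effect amounts to two simplifications of a three-sequence convolution. Your bookkeeping of the index shift, the vanishing constant terms of the $F^{(k)}(x)$, and the hypotheses $n\ge 1$, $m\ge 3$ (needed only so that $m-2\ge 1$) is all in order.
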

\begin{proof}
Since
\begin{align*}
1-x-x^2-\cdots -x^m &= 1-x-x^2-\cdots -x^{m-1} - x (1-x-x^2-\cdots -x^{m-2}) \\
& \qquad + x (1-x-x^2-\cdots -x^{m-1})
\end{align*}
it follows that
\begin{equation*}
\frac{1}{F^{(m)}(x)} = \frac{1}{F^{(m-1)}(x)} - \frac{x}{F^{(m-2)}(x)} + \frac{x}{F^{(m-1)}(x)}
\end{equation*}
or equivalently
\begin{equation*}
F^{(m-2)}(x) \Big (F^{(m)}(x) - F^{(m-1)}(x)\Big ) = x F^{(m)}(x) \Big (F^{(m-1)}(x) - F^{(m-2)}(x)\Big ).
\end{equation*}
Passing to the power series and comparing coefficients of $x^n$ we obtain the identity.
\end{proof}

The next corollary is again a rediscovery of Theorem 2.1 in \cite{Frontczak2018}.

\begin{corollary}\label{cor1_thm3}
For all $n\geq 1$ we have
\begin{equation*}
T_{n+1} - F_{n+1} = \sum_{j=0}^{n-1} T_j F_{n-1-j}.
\end{equation*}
\end{corollary}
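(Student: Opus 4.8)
The plan is to specialize Theorem \ref{main_thm3} to the smallest admissible value $m=3$, so that the two alternating sums appearing there become convolutions against the $1$-step sequence $F^{(1)}$. Since $F^{(1)}(x)=x/(1-x)$, we have $F^{(1)}_0=0$ and $F^{(1)}_j=1$ for every $j\ge 1$; hence each convolution with $F^{(1)}$ collapses to an ordinary partial sum. First I would substitute $m=3$ (using $F^{(2)}=F$ and $F^{(3)}=T$) and simplify: the left-hand side of Theorem \ref{main_thm3} becomes $\sum_{j=1}^{n}(T_{n-j}-F_{n-j})=\sum_{i=0}^{n-1}(T_i-F_i)$, while on the right-hand side the subtracted term reduces to $\sum_{j=0}^{n-1}T_j F^{(1)}_{n-1-j}=\sum_{j=0}^{n-2}T_j$.

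After this simplification and a rearrangement, one is left with
\begin{equation*}
\sum_{j=0}^{n-1} T_j F_{n-1-j} = 2\sum_{i=0}^{n-1} T_i - T_{n-1} - \sum_{i=0}^{n-1} F_i .
\end{equation*}
The second step is to eliminate the two partial sums on the right. For the Fibonacci sum I would invoke Theorem \ref{partial_sum} with $m=2$ (equivalently, the classical identity), giving $\sum_{i=0}^{n-1}F_i=F_{n+1}-1$; for the Tribonacci sum I would use \eqref{Tribonacci_sum} with $n$ replaced by $n-1$, which gives $2\sum_{i=0}^{n-1}T_i=T_{n+1}+T_{n-1}-1$. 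Substituting both, the $T_{n-1}$ terms cancel and the two constants cancel, leaving precisely $T_{n+1}-F_{n+1}$, as claimed.

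I do not expect a genuine obstacle here; the only points requiring care are the index bookkeeping in the shifts $n\mapsto n-1$ and the boundary behaviour of $F^{(m)}_n$ for small indices — in particular the asymmetry $F^{(1)}_0=0$ versus $F^{(1)}_j=1$ for $j\ge 1$, which is exactly what makes the two collapsed sums differ by a single boundary term — together with a quick sanity check at $n=1$, where both sides vanish. Finally, I would add a one-line remark recording the shortcut: by commutativity of convolution $\sum_{j=0}^{n-1}T_j F_{n-1-j}=\sum_{j=0}^{n-1}F_j T_{n-1-j}$, and the latter is just \eqref{eq:T=F+FT} with $n$ replaced by $n-1$; I would nevertheless keep the derivation from Theorem \ref{main_thm3} in the main text, since it illustrates the "switch effect" mechanism that is the theme of this subsection.
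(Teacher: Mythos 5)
Your proposal is correct and follows the paper's own route exactly: set $m=3$ in Theorem \ref{main_thm3}, use $F^{(1)}_0=0$, $F^{(1)}_j=1$ for $j\ge 1$ to collapse the convolutions into partial sums, and finish with \eqref{Tribonacci_sum} and $\sum_{j=0}^{n}F_j=F_{n+2}-1$; the paper merely states these ingredients while you carry out the bookkeeping explicitly. The closing remark that the identity is also \eqref{eq:T=F+FT} shifted by one is consistent with the paper's own observation that this corollary rediscovers that result.
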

\begin{proof}
Set $m=3$ in Theorem \ref{main_thm3} and simplify using
\begin{equation*}
F_n^{(1)} = \begin{cases}
 0, &\text{\rm if $n=0$}; \\
 1, &\text{\rm if $n\geq 1$};
\end{cases}
\end{equation*}
equation \eqref{Tribonacci_sum} and
\begin{equation*}
\sum_{j=0}^n F_j = F_{n+2} - 1.
\end{equation*}
\end{proof}

\begin{corollary}\label{cor2_thm3}
For all $n\geq 1$ we have
\begin{equation*}
\sum_{j=0}^{n} F_{j}\big (T_{n-j} - Q_{n-j}\big ) = \sum_{j=0}^{n-1} \big ( F_{j} - T_{j}\big ) Q_{n-1-j}
\end{equation*}
and
\begin{equation}\label{eq:TQ-TP=TP-QP}
     \sum\limits_{j=0}^n T_j \big(Q_{n-j} - P_{n-j} \big) = \sum\limits_{j=0}^{n-1} \big(T_{j} - Q_{j}\big ) P_{n-1-j}.
  \end{equation}
\end{corollary}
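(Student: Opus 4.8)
The plan is to read off both identities directly from Theorem \ref{main_thm3} by specializing $m$, and then to perform two purely cosmetic manipulations on each: multiplying through by $-1$, and reindexing the convolution on the right-hand side using its symmetry. No new generating-function work is needed; the corollary is simply the two smallest ``labelled'' instances $m=4$ and $m=5$ written in the equivalent ``switch the parentheses and the minus sign'' shape advertised before the theorem.

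First I would set $m=4$ in Theorem \ref{main_thm3}. Since $F_n^{(2)}=F_n$, $F_n^{(3)}=T_n$ and $F_n^{(4)}=Q_n$, the theorem becomes
\[
\sum_{j=0}^{n} F_j\bigl(Q_{n-j}-T_{n-j}\bigr)=\sum_{j=0}^{n-1} Q_j\bigl(T_{n-1-j}-F_{n-1-j}\bigr).
\]
Multiplying both sides by $-1$ turns the left-hand side into $\sum_{j=0}^{n}F_j(T_{n-j}-Q_{n-j})$ and the right-hand side into $\sum_{j=0}^{n-1}Q_j(F_{n-1-j}-T_{n-1-j})$. Replacing the summation index $j$ by $n-1-j$ in the latter sum (equivalently, using that the convolution of $(Q_j)_j$ with $(F_j-T_j)_j$ is symmetric) rewrites it as $\sum_{j=0}^{n-1}(F_j-T_j)Q_{n-1-j}$, which is exactly the first claimed identity.

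For \eqref{eq:TQ-TP=TP-QP} I would repeat the same three steps with $m=5$. Now $F_n^{(3)}=T_n$, $F_n^{(4)}=Q_n$ and $F_n^{(5)}=P_n$, so Theorem \ref{main_thm3} reads
\[
\sum_{j=0}^{n} T_j\bigl(P_{n-j}-Q_{n-j}\bigr)=\sum_{j=0}^{n-1} P_j\bigl(Q_{n-1-j}-T_{n-1-j}\bigr).
\]
Negating, and then substituting $j\mapsto n-1-j$ in the right-hand convolution exactly as before, yields $\sum_{j=0}^{n} T_j(Q_{n-j}-P_{n-j})=\sum_{j=0}^{n-1}(T_j-Q_j)P_{n-1-j}$, which is \eqref{eq:TQ-TP=TP-QP}.

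There is essentially no obstacle: all the mathematical content sits in Theorem \ref{main_thm3}. The only point deserving a word of care is the index shift $j\mapsto n-1-j$ on the right-hand side, which is legitimate precisely because the range $0\le j\le n-1$ is invariant under this involution; it is this symmetry of the convolution product, together with the sign flip, that accounts for the ``switch effect'' in the displayed identities.
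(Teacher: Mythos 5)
Your proposal is correct and matches the paper's own proof, which likewise just specializes Theorem \ref{main_thm3} to $m=4$ and $m=5$ and simplifies; your explicit sign flip and reindexing $j\mapsto n-1-j$ are exactly the "simplify" step the paper leaves implicit.
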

\begin{proof}
Set $m=4$ and $m=5$, respectively, in Theorem \ref{main_thm3} and simplify.
\end{proof}
It is also possible to obtain the "switch" effect in the following sense.

\begin{theorem}\label{main_thm4}
For all $m,n\geq 2$ we have
\begin{equation*}
\sum_{j=0}^{n}  F_j^{(m-1)} F_{n-j}^{(m)} = \sum_{j=0}^{n-2} 2^j \big ( F_{n-1-j}^{(m-1)} - F_{n-2-j}^{(m)}\big ).
\end{equation*}
\end{theorem}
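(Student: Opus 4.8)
The plan is to proceed by the generating function method used throughout this section. Set $A(x) = 1-x-x^2-\cdots-x^{m-1}$ and $B(x) = 1-x-x^2-\cdots-x^m = A(x)-x^m$, so that $F^{(m-1)}(x) = x/A(x)$ and $F^{(m)}(x) = x/B(x)$. The left-hand side of the claimed identity is then, by the Cauchy product rule, the coefficient of $x^n$ in
\[
F^{(m-1)}(x)F^{(m)}(x) = \frac{x^2}{A(x)B(x)}.
\]

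For the right-hand side, the main idea is to recognise the weights $2^j$ as the Taylor coefficients of $1/(1-2x)$. Consequently $\sum_{j\geq 0} 2^j F_{n-1-j}^{(m-1)}$ is the coefficient of $x^n$ in $\frac{x}{1-2x}F^{(m-1)}(x)$ and $\sum_{j\geq 0} 2^j F_{n-2-j}^{(m)}$ is the coefficient of $x^n$ in $\frac{x^2}{1-2x}F^{(m)}(x)$; since $F_0^{(m-1)}=0$, the first sum effectively runs only up to $j=n-2$, matching the range in the statement. Thus the right-hand side of the theorem is the coefficient of $x^n$ in
\[
\frac{x}{1-2x}\Big(F^{(m-1)}(x) - x F^{(m)}(x)\Big).
\]

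It therefore remains to verify the generating function identity $F^{(m-1)}(x)F^{(m)}(x) = \frac{x}{1-2x}\big(F^{(m-1)}(x) - xF^{(m)}(x)\big)$, and after clearing denominators this reduces to the telescoping polynomial identity
\[
B(x) - xA(x) = (1-x-\cdots-x^m) - x(1-x-\cdots-x^{m-1}) = 1-2x.
\]
Indeed, $F^{(m-1)}(x) - xF^{(m)}(x) = \frac{xB(x) - x^2 A(x)}{A(x)B(x)} = \frac{x(1-2x)}{A(x)B(x)}$, so multiplying by $\frac{x}{1-2x}$ returns $\frac{x^2}{A(x)B(x)} = F^{(m-1)}(x)F^{(m)}(x)$. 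Comparing coefficients of $x^n$ then yields the theorem; the low-order cases $n=0,1$ (where both sides are empty sums, hence $0$) are immediate, so in fact the identity holds for all $n\geq 0$.

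I do not anticipate a genuine obstacle here. The only points requiring a little care are spotting the correct combination $F^{(m-1)}(x) - xF^{(m)}(x)$ — equivalently, noticing that $1/(1-2x)$ is exactly the factor that cancels the telescoped difference $B-xA$ — and keeping the summation ranges and the convention $F_0^{(m-1)}=0$ straight when translating between the power series and the stated double sum.
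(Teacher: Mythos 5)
Your proof is correct and follows essentially the same route as the paper: the paper's proof rests on the very same polynomial identity $1-x-\cdots-x^m = 1-2x + x\,(1-x-\cdots-x^{m-1})$ (your $B(x)-xA(x)=1-2x$), rewritten as $\frac{1}{F^{(m)}(x)} = \frac{1}{P_2(x)} + \frac{x}{F^{(m-1)}(x)}$ with $P_2(x)=\frac{x}{1-2x}$, leading to the identical functional equation $F^{(m-1)}(x)F^{(m)}(x) = P_2(x)\bigl(F^{(m-1)}(x)-xF^{(m)}(x)\bigr)$ and a comparison of coefficients.
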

\begin{proof}
The relation
\begin{equation*}
1 - x - x^2 - \cdots - x^m = 1 - 2x + x (1-x-x^2-\cdots -x^{m-1})
\end{equation*}
translates to
\begin{equation*}
\frac{1}{F^{(m)}(x)} = \frac{1}{P_2(x)} + \frac{x}{F^{(m-1)}(x)}
\end{equation*}
with
\begin{equation*}
P_2(x) = \frac{x}{1-2x} = x \sum_{n=0}^\infty 2^n x^n.
\end{equation*}
This gives
\begin{equation*}
P_2(x) F^{(m-1)}(x) = F^{(m)}(x) F^{(m-1)}(x) + x P_2(x) F^{(m)}(x).
\end{equation*}
Passing to the power series and comparing coefficients of $x^n$ we obtain the identity.
\end{proof}

\begin{corollary}\label{cor1_thm4}
For all $n\geq 0$
\begin{equation}\label{eq:conv_2^j_F_n-j}
\sum_{j=0}^{n} 2^j F_{n-j} = 2^{n+1} - F_{n+3}.
\end{equation}
\end{corollary}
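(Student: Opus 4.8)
The plan is to specialize Theorem~\ref{main_thm4} to $m=2$ and simplify, in the same spirit as Corollary~\ref{cor1_thm3} was derived from Theorem~\ref{main_thm3}. For $m=2$ the sequence $F^{(m)}=F^{(2)}$ is the ordinary Fibonacci sequence $F$, while $F^{(m-1)}=F^{(1)}$ is the sequence that vanishes at $0$ and equals $1$ for every index $\ge 1$ (as recorded in the proof of Corollary~\ref{cor1_thm3}). The hypothesis $m,n\ge 2$ of Theorem~\ref{main_thm4} is met, and after the substitution $n\mapsto n+2$ it will cover all $n\ge 0$.

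First I would evaluate the left-hand side of Theorem~\ref{main_thm4}. Since $F^{(1)}_0=0$ and $F^{(1)}_j=1$ for $j\ge 1$, the sum $\sum_{j=0}^{n}F^{(1)}_j F_{n-j}$ collapses to $\sum_{k=0}^{n-1}F_k$, which by the partial-sum identity $\sum_{k=0}^{n-1}F_k=F_{n+1}-1$ (quoted in the proof of Corollary~\ref{cor1_thm3}) equals $F_{n+1}-1$.

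Next I would evaluate the right-hand side. For $0\le j\le n-2$ the index $n-1-j$ is at least $1$, so $F^{(1)}_{n-1-j}=1$, and the right-hand side becomes $\sum_{j=0}^{n-2}2^j-\sum_{j=0}^{n-2}2^j F_{n-2-j}=(2^{n-1}-1)-\sum_{j=0}^{n-2}2^j F_{n-2-j}$. Writing $C_n:=\sum_{j=0}^{n}2^j F_{n-j}$ for the convolution we want, Theorem~\ref{main_thm4} now reads $F_{n+1}-1=2^{n-1}-1-C_{n-2}$, that is $C_{n-2}=2^{n-1}-F_{n+1}$. Replacing $n$ by $n+2$ gives $C_n=2^{n+1}-F_{n+3}$ for all $n\ge 0$, which is exactly \eqref{eq:conv_2^j_F_n-j}.

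There is no deep obstacle here; the only care needed is in the index bookkeeping of the shift $j\mapsto n-2-j$ that identifies the tail sum on the right with $C_{n-2}$, together with the observation that the range $0\le j\le n-2$ genuinely forces $F^{(1)}_{n-1-j}=1$ so that no boundary term is dropped. As an independent check one can derive \eqref{eq:conv_2^j_F_n-j} directly from generating functions: the product $\tfrac{1}{1-2x}\cdot\tfrac{x}{1-x-x^2}$ has partial-fraction decomposition $\tfrac{2}{1-2x}-\tfrac{x+2}{1-x-x^2}$, and since $\tfrac{x+2}{1-x-x^2}=\sum_{n\ge 0}F_{n+3}x^n$ this yields $\sum_{n\ge 0}(2^{n+1}-F_{n+3})x^n$.
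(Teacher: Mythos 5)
Your proof is correct and follows exactly the paper's route: set $m=2$ in Theorem~\ref{main_thm4}, use $F^{(1)}_0=0$, $F^{(1)}_j=1$ for $j\geq 1$ together with $\sum_{k=0}^{n-1}F_k=F_{n+1}-1$ to simplify both sides, and then replace $n$ by $n+2$; the paper's proof is just this in one line, and your index bookkeeping (and the generating-function cross-check) fills in the details consistently.
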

\begin{proof}
Set $m=2$ in Theorem \ref{main_thm4}, simplify, and replace $n$ by $n+2$.
\end{proof}

\begin{corollary}\label{cor2_thm4}
For all $n\geq 0$
\begin{equation*}
\sum_{j=0}^{n} 2^j \big ( F_{n+1-j} - T_{n-j} \big ) = T_{n+4} - F_{n+4}.
\end{equation*}
\end{corollary}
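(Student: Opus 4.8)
The plan is to obtain this as an immediate specialization of Theorem~\ref{main_thm4}, combined with the Fibonacci--Tribonacci convolution already recorded in Corollary~\ref{cor:convFT/TQ/QP}. First I would set $m=3$ in Theorem~\ref{main_thm4}; since $F_j^{(2)}=F_j$ and $F_j^{(3)}=T_j$, this yields, for all $n\geq 2$,
\[
\sum_{j=0}^{n} F_j\,T_{n-j} \;=\; \sum_{j=0}^{n-2} 2^j\bigl(F_{n-1-j} - T_{n-2-j}\bigr).
\]

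Next I would rewrite the left-hand side using identity \eqref{eq:T=F+FT}, namely $\sum_{j=0}^{n} F_j T_{n-j} = T_{n+2} - F_{n+2}$, which turns the display into
\[
T_{n+2} - F_{n+2} \;=\; \sum_{j=0}^{n-2} 2^j\bigl(F_{n-1-j} - T_{n-2-j}\bigr),
\]
valid for $n\geq 2$. Finally I would perform the index shift $n\mapsto n+2$; the exponents $n-1-j$ and $n-2-j$ become $n+1-j$ and $n-j$, the upper summation limit becomes $n$, and the left-hand side becomes $T_{n+4}-F_{n+4}$, so that after relabeling the statement reads exactly as claimed and now holds for all $n\geq 0$.

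There is no real obstacle here: the corollary is a two-line consequence of Theorem~\ref{main_thm4} together with \eqref{eq:T=F+FT}. The only point deserving a moment's attention is the bookkeeping around ranges—confirming that the hypothesis $n\geq 2$ of Theorem~\ref{main_thm4} (at $m=3$) becomes precisely $n\geq 0$ after the shift, and that the summation index and exponent substitutions are carried out consistently. As an optional sanity check one could verify the cases $n=0$ and $n=1$ directly from the definitions of $F_k$ and $T_k$, but this is not needed for the argument.
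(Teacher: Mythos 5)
Your proposal is correct and matches the paper's proof: the paper also sets $m=3$ in Theorem~\ref{main_thm4}, substitutes the known Fibonacci--Tribonacci convolution, and shifts $n\mapsto n+2$. The only cosmetic difference is that you cite \eqref{eq:T=F+FT} while the paper cites Corollary~\ref{cor1_thm3}, but these are the same identity up to an index shift, so the arguments are essentially identical.
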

\begin{proof}
Set $m=3$ in Theorem \ref{main_thm4}, use Corollary \ref{cor1_thm3}, and replace $n$ by $n+2$.
\end{proof}

\begin{corollary}\label{cor3_thm4}
For all $n\geq 0$
\begin{equation}\label{eq:conv_2^j_T_n-j}
\sum_{j=0}^{n} 2^j T_{n-j} = 2^{n+2} - T_{n+4}.
\end{equation}
\end{corollary}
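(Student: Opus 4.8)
The plan is to derive \eqref{eq:conv_2^j_T_n-j} directly from the two preceding corollaries of Theorem \ref{main_thm4}, namely \eqref{eq:conv_2^j_F_n-j} and Corollary \ref{cor2_thm4}, by eliminating the mixed $2^j$--Fibonacci convolution that links them.

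First I would expand Corollary \ref{cor2_thm4} as
\[
\sum_{j=0}^{n} 2^j F_{n+1-j} - \sum_{j=0}^{n} 2^j T_{n-j} = T_{n+4} - F_{n+4}.
\]
Next I would evaluate the first sum using \eqref{eq:conv_2^j_F_n-j}: replacing $n$ by $n+1$ there gives $\sum_{j=0}^{n+1} 2^j F_{n+1-j} = 2^{n+2} - F_{n+4}$, and since the top term $2^{n+1}F_0$ vanishes, we also have $\sum_{j=0}^{n} 2^j F_{n+1-j} = 2^{n+2} - F_{n+4}$. Substituting this back and cancelling the common $-F_{n+4}$ on both sides yields $\sum_{j=0}^{n} 2^j T_{n-j} = 2^{n+2} - T_{n+4}$, as claimed. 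This route works uniformly for all $n\ge 0$, since both \eqref{eq:conv_2^j_F_n-j} and Corollary \ref{cor2_thm4} do.

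Alternatively, and in keeping with the pattern of the previous proofs, one can set $m=3$ in Theorem \ref{main_thm4} directly: the left-hand side $\sum_{j=0}^{n} F_j T_{n-j}$ equals $T_{n+2}-F_{n+2}$ by \eqref{eq:T=F+FT}, while the right-hand side splits as $\sum_{j=0}^{n-2} 2^j F_{n-1-j} - \sum_{j=0}^{n-2} 2^j T_{n-2-j}$; the first piece is $2^n - F_{n+2}$ by \eqref{eq:conv_2^j_F_n-j} (again after discarding a vanishing boundary term), and the second piece is exactly the sum sought, after the shift $n\mapsto n+2$. Solving the resulting linear relation for that sum gives the identity. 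In this second approach the only care needed is bookkeeping: tracking the index shifts, noting that the dropped boundary terms all carry the factor $F_0=0$, and separately checking $n=0,1$ (which lie outside the range $m,n\ge 2$ of Theorem \ref{main_thm4}, and where the truncated sum is empty) against the closed form by direct computation. No genuine difficulty arises beyond this.
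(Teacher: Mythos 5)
Your first argument is exactly the paper's proof: the paper simply says ``Combine Corollary \ref{cor1_thm4} with Corollary \ref{cor2_thm4}'', which is precisely your elimination of the mixed $2^j$--Fibonacci sum (your alternative route via $m=3$ in Theorem \ref{main_thm4} and \eqref{eq:T=F+FT} is just an unwinding of how Corollary \ref{cor2_thm4} was obtained). The proposal is correct and matches the paper's approach.
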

\begin{proof}
Combine Corollary \ref{cor1_thm4} with Corollary \ref{cor2_thm4}.
\end{proof}

\begin{corollary}\label{cor4_thm4}
For all $n\geq 0$
\begin{equation*}
\sum_{j=0}^{n+2} T_j Q_{n-j} = \sum_{j=0}^{n} 2^j \big ( T_{n+1-j} - Q_{n-j} \big ).
\end{equation*}
\end{corollary}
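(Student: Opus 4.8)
The plan is to read this identity off Theorem~\ref{main_thm4} by specializing to $m=4$, in the same way Corollaries~\ref{cor1_thm4}--\ref{cor3_thm4} disposed of the cases $m=2$ and $m=3$. No fresh generating-function manipulation should be required; the whole content is a substitution together with an index shift.

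Concretely, the first step is to set $m=4$ in Theorem~\ref{main_thm4} and use $F_n^{(3)}=T_n$ and $F_n^{(4)}=Q_n$, which gives, for all $n\geq 2$,
\begin{equation*}
\sum_{j=0}^{n} T_j Q_{n-j} = \sum_{j=0}^{n-2} 2^j\big(T_{n-1-j} - Q_{n-2-j}\big).
\end{equation*}
The second step is to replace $n$ by $n+2$. On the right-hand side the summation index now runs over $0,\dots,n$, the Tribonacci subscript becomes $n+1-j$ and the Tetranacci subscript becomes $n-j$, so the right-hand side is exactly $\sum_{j=0}^{n}2^j(T_{n+1-j}-Q_{n-j})$; on the left-hand side one gets the $(n+2)$-fold convolution $\sum_{j=0}^{n+2}T_j Q_{n+2-j}$ (so the Tetranacci subscript in the displayed identity should read $n+2-j$), and the hypothesis $n+2\geq 2$ relaxes to $n\geq 0$, matching the range in the statement. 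This completes the proof.

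I do not anticipate any real obstacle; the only care needed is bookkeeping at the endpoints of the sums — for instance, that replacing $\sum_{j=0}^{n-1}$ by $\sum_{j=0}^{n-2}$ inside Theorem~\ref{main_thm4} costs nothing since $F_0^{(m-1)}=0$, and that the new top terms of the left-hand sum after the shift are consistent with $Q_0=Q_{-1}=Q_{-2}=0$. It is also worth a remark as to why the identity is left in this ``switch''-type form: one could invoke Corollary~\ref{cor3_thm4} with $n\mapsto n+1$ to rewrite $\sum_{j=0}^{n}2^jT_{n+1-j}=2^{n+3}-T_{n+5}$, but the companion Tetranacci sum $\sum_{j=0}^{n}2^jQ_{n-j}$ admits no comparably compact closed form, so keeping the two $2^j$-weighted pieces grouped together is the cleanest option. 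A numerical spot-check for $n=0,1,2$ against the initial Tribonacci and Tetranacci values confirms the bookkeeping.
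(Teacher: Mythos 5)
Your proof is correct and is exactly the paper's argument: set $m=4$ in Theorem~\ref{main_thm4} and replace $n$ by $n+2$. You are also right that the left-hand side of the displayed corollary should read $\sum_{j=0}^{n+2} T_j Q_{n+2-j}$ (a typo in the statement), as a check at $n=0$ confirms.
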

\begin{proof}
Set $m=4$ in Theorem \ref{main_thm4}, and replace $n$ by $n+2$.
\end{proof}


This is a general formula of \eqref{eq:conv_2^j_F_n-j} and \eqref{eq:conv_2^j_T_n-j} for Fibonacci $m$-step numbers.
\begin{theorem}\label{power2_conv}
For all $n\geq 0$ and $m\geq 1$
\begin{equation*}
\sum_{j=0}^{n} 2^j F_{n-j}^{(m)} = 2^{n-1+m} - F_{n+1+m}^{(m)}.
\end{equation*}
\end{theorem}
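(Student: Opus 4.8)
The plan is to imitate the generating-function argument behind Theorem \ref{main_thm4}, but arranged so that the output is a \emph{pure} $2^j$-weighted sum rather than a convolution of two $m$-step sequences. The starting point is the elementary telescoping identity
\begin{equation*}
(1-x)(1-x-x^2-\cdots-x^m) = 1-2x+x^{m+1},
\end{equation*}
which rewrites the generating function as
\begin{equation*}
F^{(m)}(x) = \frac{x}{1-x-x^2-\cdots-x^m} = \frac{x(1-x)}{1-2x+x^{m+1}},
\end{equation*}
so that $\dfrac{1-x}{F^{(m)}(x)} = \dfrac{1-2x}{x} + x^{m} = \dfrac{1}{P_2(x)} + x^{m}$, where, as in the proof of Theorem \ref{main_thm4}, $P_2(x) = \dfrac{x}{1-2x} = \sum_{n\ge 0} 2^{n} x^{n+1}$. (One could instead try to induct on $m$ via \eqref{eq:F^m+1-F^m-x^mF^m+1F^m}, but that route forces a double convolution and is messier, so I would avoid it.)

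First I would clear denominators in the last display, multiplying through by $P_2(x)F^{(m)}(x)$, to obtain the functional relation
\begin{equation*}
(1-x)\,P_2(x) = F^{(m)}(x) + x^{m}\,P_2(x)\,F^{(m)}(x).
\end{equation*}
Next I would expand both sides as power series. On the left, $(1-x)P_2(x) = \dfrac{x(1-x)}{1-2x} = x + \sum_{k\ge 2} 2^{k-2}x^{k}$. On the right, setting $S_n := \sum_{j=0}^{n} 2^{j} F_{n-j}^{(m)}$, the Cauchy product gives $P_2(x)F^{(m)}(x) = \sum_{n\ge 0} S_n\, x^{n+1}$, hence $x^{m}P_2(x)F^{(m)}(x) = \sum_{n\ge 0} S_n\, x^{n+m+1}$, while $F^{(m)}(x) = \sum_{i\ge 0} F_i^{(m)} x^{i}$.

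Finally I would compare the coefficient of $x^{N}$ for $N \ge m+1$: since $m\ge 1$ we have $N\ge 2$, so the left-hand coefficient is exactly $2^{N-2}$, giving $2^{N-2} = F_N^{(m)} + S_{N-m-1}$, that is $S_{N-m-1} = 2^{N-2} - F_N^{(m)}$; the substitution $N = n+m+1$ then yields $S_n = 2^{\,n-1+m} - F_{n+1+m}^{(m)}$ for every $n\ge 0$, which is the assertion. The only delicate points — and the nearest thing to an obstacle — are the routine bookkeeping of the index shifts, verifying that the exceptional low-degree term $x^{1}$ on the left is never used (it lies strictly below degree $m+1\ge 2$), and handling the degenerate case $m=1$, where $F^{(1)}_n$ is the indicator of $n\ge 1$; all of these are checked directly.
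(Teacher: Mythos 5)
Your proof is correct, but it is genuinely different from the one in the paper. The paper proves Theorem \ref{power2_conv} by induction on $m$: the base cases are Corollary \ref{cor1_thm4} ($m=2$) and the trivial case $m=1$, and the inductive step combines Theorem \ref{main_thm4} (with $n$ replaced by $n+2$) with the two-sequence convolution of Theorem \ref{th:general_conv_m_step} to pass from step $m-1$ to step $m$. You instead give a direct, induction-free derivation from the closed factorization $(1-x)(1-x-\cdots-x^m)=1-2x+x^{m+1}$, which yields the single functional equation $(1-x)P_2(x)=F^{(m)}(x)+x^m P_2(x)F^{(m)}(x)$; extracting the coefficient of $x^N$ for $N\geq m+1$ immediately gives $S_{N-m-1}=2^{N-2}-F_N^{(m)}$, and your index bookkeeping (including the harmless low-degree term $x^1$ and the uniform validity at $m=1$, where $F^{(1)}(x)=x/(1-x)$ makes the same computation go through) is accurate. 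What your route buys is self-containedness and uniformity: one coefficient comparison handles all $m\geq 1$ and all $n\geq 0$ at once, with no appeal to the earlier convolution theorems and no base cases beyond checking degrees. What the paper's route buys is structural: it exhibits the $2^j$-weighted sum as a consequence of the mixed convolutions \eqref{eq:simplest_conv_sum_in_the_article} and Theorem \ref{main_thm4}, tying the identity into the web of results the article is organized around, at the cost of an induction and of quoting two prior theorems. Either argument is complete; yours could even be seen as the more economical proof of this particular statement.
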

\begin{proof}
We use induction on $m$. The statement is true for $m=1$ and $m=2$. Assume the identity is true for a fixed $m-1>2$ (and all $n$).
Replacing $n$ by $n+2$ in Theorem \ref{main_thm4} yields
\begin{equation*}
\sum_{j=0}^{n} 2^j F_{n-j}^{(m)} = \sum_{j=0}^{n} 2^j F_{n+1-j}^{(m-1)} - \sum_{j=0}^{n+2}  F_j^{(m-1)} F_{n+2-j}^{(m)}.
\end{equation*}
But from Theorem \ref{th:general_conv_m_step} upon making the replacements $n\mapsto n+2+m$ and $m\mapsto m-1$ we get
\begin{equation*}
\sum_{j=0}^{n+2}  F_j^{(m-1)} F_{n+2-j}^{(m)} = F_{n+1+m}^{(m)} - F_{n+1+m}^{(m-1)}.
\end{equation*}
From here using the inductive hypothesis we can calculate
\begin{align*}
\sum_{j=0}^{n} 2^j F_{n-j}^{(m)} &= \sum_{j=0}^{n} 2^j F_{n+1-j}^{(m-1)} - F_{n+1+m}^{(m)} + F_{n+1+m}^{(m-1)} \\
&= \sum_{j=0}^{n+1} 2^j F_{n+1-j}^{(m-1)} - F_{n+1+m}^{(m)} + F_{n+1+m}^{(m-1)} \\
&= 2^{n+1-1+(m-1)} - F_{n+1+1+(m-1)}^{(m-1)} - F_{n+1+m}^{(m)} + F_{n+1+m}^{(m-1)} \\
&= 2^{n-1+m} - F_{n+1+m}^{(m)}.
\end{align*}
\end{proof}

\subsection{Three other general identities}

The next convolution identities involve an alternating sum. They all share the same structure,
we convolve two sequences with indices $m$ that differ by $2$.

\begin{theorem}\label{alternating_thm}
For any $n\geq 2m-1$ we have
  \begin{equation}\label{alternating_conv}
	\sum\limits_{j=0}^{n-1} (-1)^j \big ( F_j^{(2m)} - F_{j}^{(2m-2)}\big )
	= (-1)^{n+1} \sum\limits_{j=0}^{n-2m+1} F_{j}^{(2m)}F_{n-2m+1-j}^{(2m-2)}.
   \end{equation}
\end{theorem}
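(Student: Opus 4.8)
The plan is to reuse exactly the generating-function machinery of the previous theorems, but evaluated at $-x$ so that the alternating signs are produced automatically. First I would invoke Theorem~\ref{thm:gen_m+p_conv} in the special case $m\mapsto 2m-2$, $p=2$; in the functional form established inside its proof this reads
\begin{equation*}
F^{(2m)}(x)-F^{(2m-2)}(x)=\bigl(x^{2m-2}+x^{2m-1}\bigr)\,F^{(2m-2)}(x)\,F^{(2m)}(x).
\end{equation*}
The decisive move is to substitute $x\mapsto -x$. Since $2m-2$ is even and $2m-1$ is odd, the factor $x^{2m-2}+x^{2m-1}$ turns into $x^{2m-2}-x^{2m-1}=x^{2m-2}(1-x)$, so that
\begin{equation*}
F^{(2m)}(-x)-F^{(2m-2)}(-x)=x^{2m-2}(1-x)\,F^{(2m-2)}(-x)\,F^{(2m)}(-x).
\end{equation*}

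Next I would multiply both sides by $x$ and divide by $1-x$ (a unit in $\mathbb{Q}[[x]]$); the factor $1-x$ cancels, leaving
\begin{equation*}
\frac{x\bigl(F^{(2m)}(-x)-F^{(2m-2)}(-x)\bigr)}{1-x}=x^{2m-1}\,F^{(2m-2)}(-x)\,F^{(2m)}(-x),
\end{equation*}
and it remains to compare coefficients of $x^{n}$. On the left, $F^{(k)}(-x)=\sum_{j}(-1)^{j}F_{j}^{(k)}x^{j}$, and the operator $x/(1-x)$ sends a power series to the generating function of its partial sums shifted by one index, so $[x^{n}]$ of the left side is $\sum_{j=0}^{n-1}(-1)^{j}\bigl(F_{j}^{(2m)}-F_{j}^{(2m-2)}\bigr)$. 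On the right, the Cauchy product $F^{(2m-2)}(-x)F^{(2m)}(-x)$ has $x^{k}$-coefficient $(-1)^{k}\sum_{j=0}^{k}F_{j}^{(2m)}F_{k-j}^{(2m-2)}$; multiplying by $x^{2m-1}$, writing $k=n-2m+1$, and using that $2m$ is even (so $(-1)^{n-2m+1}=(-1)^{n+1}$) yields $(-1)^{n+1}\sum_{j=0}^{n-2m+1}F_{j}^{(2m)}F_{n-2m+1-j}^{(2m-2)}$. Equating the two coefficient expressions gives \eqref{alternating_conv}.

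I do not anticipate a genuine obstacle here; the only point requiring care is the parity bookkeeping, and it is worth emphasising in the write-up \emph{why} it works: it is precisely because the two exponents $2m-2$ and $2m-1$ have opposite parity that the substitution $x\mapsto -x$ converts $1+x$ into $1-x$, which is exactly the denominator contributed by the partial-sum operator, so the two cancel and no spurious polynomial correction survives. The hypothesis $n\ge 2m-1$ is then automatic: for $n<2m-1$ the convolution on the right is an empty sum, while $F_{j}^{(2m)}=F_{j}^{(2m-2)}$ for $j\le 2m-1$ forces the left-hand sum to vanish as well, so the identity in fact holds trivially below the stated range. When finalising the argument I would just verify the exact lower summation limits and the empty-sum conventions at the boundary values $n=2m-1$ and $n=2m$.
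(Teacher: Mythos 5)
Your proposal is correct and is essentially the paper's own argument: both reduce the identity to the functional equation $\frac{x}{1-x}\bigl(F^{(2m)}(-x)-F^{(2m-2)}(-x)\bigr)=x^{2m-1}F^{(2m)}(-x)F^{(2m-2)}(-x)$ (the paper writes the factor $\frac{x}{1-x}$ as $F^{(1)}(x)$) and then compare coefficients of $x^n$. The only cosmetic difference is how the functional equation is obtained --- you substitute $x\mapsto -x$ into the relation \eqref{Eq:Fab-Fa=xFaFab} with $p=2$, while the paper splits the polynomial $1/F^{(2m)}(-x)$ directly --- and your parity bookkeeping and boundary remarks are accurate.
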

\begin{proof}
As
\begin{equation*}
1 + x - x^2 + x^3 \mp \cdots - x^{2(m-1)} + x^{2m-1} - x^{2m} = 1 + x - x^2 + x^3 \mp \cdots - x^{2(m-1)} + x^{2m-1}(1-x),
\end{equation*}
we obtain the functional equation
\begin{equation*}
\frac{1}{F^{(2m)}(-x)} = \frac{1}{F^{(2m-2)}(-x)} - x^{2m-1}\frac{1}{F^{(1)}(x)}
\end{equation*}
or equivalently
\begin{equation*}
\Big ( F^{(2m)}(-x) - F^{(2m-2)}(-x) \Big ) F^{(1)}(x) = x^{2m-1} F^{(2m)}(-x) F^{(2m-2)}(-x).
\end{equation*}
When simplifying we again use
\begin{equation*}
F_n^{(1)} = \begin{cases}
 0, &\text{\rm if $n=0$}; \\
 1, &\text{\rm if $n\geq 1$}.
\end{cases}
\end{equation*}
\end{proof}
As a corollary, we can show a different proof of identity \eqref{FQ=Q+Q-F}.
\begin{corollary}\label{cor1_alt_thm}
For any $n\geq 0$ we have
\begin{equation}
\sum_{j=0}^n F_{j} Q_{n-j} = Q_{n+1} + Q_{n-1} - F_{n+1}.
\end{equation}
\end{corollary}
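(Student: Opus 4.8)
The plan is to specialize Theorem~\ref{alternating_thm} to $m=2$, where $F^{(2m)}=F^{(4)}=Q$ and $F^{(2m-2)}=F^{(2)}=F$, and then to put the resulting alternating partial sums into closed form. Written out, \eqref{alternating_conv} becomes, for every $n\geq 3$,
\begin{equation*}
\sum_{j=0}^{n-1}(-1)^j\big(Q_j-F_j\big)=(-1)^{n+1}\sum_{j=0}^{n-3}Q_j F_{n-3-j},
\end{equation*}
so the task reduces to evaluating the difference of the two alternating partial sums on the left.

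First I would record the two alternating partial sum formulas
\begin{equation*}
\sum_{j=0}^{N}(-1)^j F_j=(-1)^N F_{N-1}-1,\qquad
\sum_{j=0}^{N}(-1)^j Q_j=(-1)^N\big(Q_{N-1}+Q_{N-3}\big)-1 .
\end{equation*}
The first is classical; the second I would prove by a short induction on $N$, the inductive step being precisely the Tetranacci recurrence $Q_{N+1}=Q_N+Q_{N-1}+Q_{N-2}+Q_{N-3}$, with the base cases handled using the index conventions $Q_0=Q_{-1}=Q_{-2}=0$ (from the definition of $F^{(m)}$ with $m=4$) and, if needed, $Q_{-3}=1$ from running the recurrence backwards. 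Subtracting the two formulas with $N=n-1$ gives
\begin{equation*}
\sum_{j=0}^{n-1}(-1)^j\big(Q_j-F_j\big)=(-1)^{n-1}\big(Q_{n-2}+Q_{n-4}-F_{n-2}\big).
\end{equation*}

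Next I would combine this with the specialized form of \eqref{alternating_conv}. Since $(-1)^{n+1}=(-1)^{n-1}$, the two sign factors cancel and we obtain, for all $n\geq 3$,
\begin{equation*}
\sum_{j=0}^{n-3}Q_j F_{n-3-j}=Q_{n-2}+Q_{n-4}-F_{n-2}.
\end{equation*}
Replacing $n$ by $n+3$ (which ranges over exactly $n\geq 0$, the boundary value $Q_{-1}$ being read as $0$) and using that a convolution is symmetric, $\sum_{j=0}^{n}Q_jF_{n-j}=\sum_{j=0}^{n}F_jQ_{n-j}$, yields $\sum_{j=0}^{n}F_jQ_{n-j}=Q_{n+1}+Q_{n-1}-F_{n+1}$, which is \eqref{FQ=Q+Q-F}.

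The only genuine computation is the alternating partial sum formula for the Tetranacci numbers; everything else is bookkeeping with the already established functional equation behind Theorem~\ref{alternating_thm}. I expect the main (and only mildly annoying) obstacle to be keeping the boundary and shift conventions straight — in particular verifying that the identity remains valid down to $n=0$, where $Q_{-1}$ must be interpreted as $0$.
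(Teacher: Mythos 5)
Your proposal is correct and follows essentially the same route as the paper: specialize Theorem~\ref{alternating_thm} to $m=2$, evaluate both alternating partial sums in closed form, and shift $n\mapsto n+3$. The only (cosmetic) difference is that you prove the alternating Tetranacci sum $\sum_{j=0}^{N}(-1)^jQ_j=(-1)^N(Q_{N-1}+Q_{N-3})-1$ directly by induction, whereas the paper cites Soykan's equivalent formula $(-1)^N(Q_{N+3}-2Q_{N+2}+Q_{N+1}-Q_N)-1$ and simplifies afterwards.
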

\begin{proof}
Set $m=2$ in Theorem \ref{alternating_thm} and use
\begin{equation*}
\sum_{j=0}^n (-1)^j F_j = (-1)^n F_{n-1} - 1
\end{equation*}
as well as \cite{Soykan}
\begin{equation*}
\sum_{j=0}^n (-1)^j Q_j = (-1)^n (Q_{n+3} - 2Q_{n+2} + Q_{n+1} - Q_n) - 1.
\end{equation*}
These results produce
\begin{equation*}
\sum_{j=0}^{n-3} F_{j} Q_{n-3-j} = Q_{n+2} - 2Q_{n+1} + Q_n - Q_{n-1} - F_{n-2},
\end{equation*}
and the statement follows upon replacing $n$ by $n+3$ and simplifying.
\end{proof}

The companion result for Theorem \ref{alternating_thm} for odd $m$ is stated next.

\begin{theorem}\label{alternating_thm2}
For any $n\geq 2m$ we have
  \begin{equation}\label{alternating_conv2}
	\sum\limits_{j=0}^{n-1} (-1)^{j+1} \big ( F_j^{(2m+1)} - F_{j}^{(2m-1)}\big )
	= (-1)^{n} \sum\limits_{j=0}^{n-2m} F_{j}^{(2m+1)}F_{n-2m-j}^{(2m-1)}.
   \end{equation}
\end{theorem}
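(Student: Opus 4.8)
The plan is to follow the template of the proof of Theorem~\ref{alternating_thm}, this time pairing the odd steps $2m+1$ and $2m-1$ rather than the even steps $2m$ and $2m-2$. The starting point is the elementary polynomial identity obtained by substituting $-x$ into the reciprocals of the generating functions. Since $x/F^{(k)}(x)=1-x-x^2-\cdots-x^{k}$, replacing $x$ by $-x$ gives $-x/F^{(k)}(-x)=1+x-x^2+x^3-\cdots+(-1)^{k+1}x^{k}$, whose leading term is $+x^{k}$ precisely when $k$ is odd. Comparing the cases $k=2m+1$ and $k=2m-1$, the only discrepancy is the pair of extra terms $-x^{2m}+x^{2m+1}=-x^{2m}(1-x)$, so
$$1+x-x^2+\cdots+x^{2m-1}-x^{2m}+x^{2m+1}=\big(1+x-x^2+\cdots+x^{2m-1}\big)-x^{2m}(1-x).$$

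Next I would translate this into a functional equation. Dividing by $-x$ yields $1/F^{(2m+1)}(-x)=1/F^{(2m-1)}(-x)+x^{2m-1}(1-x)$, and since $1/F^{(1)}(x)=(1-x)/x$ we may rewrite $x^{2m-1}(1-x)=x^{2m}/F^{(1)}(x)$, so
$$\frac{1}{F^{(2m+1)}(-x)}=\frac{1}{F^{(2m-1)}(-x)}+x^{2m}\,\frac{1}{F^{(1)}(x)}.$$
Clearing denominators (multiply through by $F^{(2m+1)}(-x)F^{(2m-1)}(-x)F^{(1)}(x)$) produces
$$\big(F^{(2m+1)}(-x)-F^{(2m-1)}(-x)\big)\,F^{(1)}(x)=-x^{2m}\,F^{(2m+1)}(-x)\,F^{(2m-1)}(-x).$$

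Finally I would pass to the power series and compare the coefficient of $x^n$ on each side, exactly as in the proof of Theorem~\ref{alternating_thm}, again using the explicit values $F_0^{(1)}=0$ and $F_i^{(1)}=1$ for $i\geq 1$. With $F^{(1)}(x)=\sum_{i\geq 1}x^i$ and $F^{(k)}(-x)=\sum_{i\geq 0}(-1)^iF_i^{(k)}x^i$, the left-hand coefficient of $x^n$ is $\sum_{j=0}^{n-1}(-1)^j\big(F_j^{(2m+1)}-F_j^{(2m-1)}\big)$, while on the right the shift by $x^{2m}$ contributes $-(-1)^n\sum_{j=0}^{n-2m}F_j^{(2m+1)}F_{n-2m-j}^{(2m-1)}$ for $n\geq 2m$ (and $0$ otherwise). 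Multiplying the resulting identity by $-1$ gives \eqref{alternating_conv2} for every $n\geq 2m$; specializing $m$ then produces mixed convolutions such as a $T$-$F^{(1)}$ one for $m=1$ and a $P$-$T$ one for $m=2$, in parallel with Corollary~\ref{cor1_alt_thm}.

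I do not expect a genuine obstacle, since the argument is structurally identical to that of Theorem~\ref{alternating_thm}. The only delicate point is the sign bookkeeping: one must simultaneously track the factors $(-1)^i$ arising from evaluating $F^{(k)}$ at $-x$ and the global minus sign carried by $-x^{2m}$, and use the value $F_i^{(1)}=1$ for $i\geq 1$ (with $F_0^{(1)}=0$) correctly, so that the left-hand convolution genuinely truncates at $j=n-1$ rather than running to $j=n$.
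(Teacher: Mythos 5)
Your proposal is correct and follows essentially the same route as the paper: you derive exactly the functional equation $\frac{1}{F^{(2m+1)}(-x)}=\frac{1}{F^{(2m-1)}(-x)}+x^{2m}\frac{1}{F^{(1)}(x)}$ (equivalently $\big(F^{(2m-1)}(-x)-F^{(2m+1)}(-x)\big)F^{(1)}(x)=x^{2m}F^{(2m-1)}(-x)F^{(2m+1)}(-x)$) that the paper's proof cites, and your sign bookkeeping and coefficient extraction, including the truncation at $j=n-1$ from $F_0^{(1)}=0$, are accurate.
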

\begin{proof}
This result follows from the functional equation
\begin{equation*}
\frac{1}{F^{(2m+1)}(-x)} = \frac{1}{F^{(2m-1)}(-x)} + x^{2m}\frac{1}{F^{(1)}(x)}
\end{equation*}
or equivalently
\begin{equation*}
\Big ( F^{(2m-1)}(-x) - F^{(2m+1)}(-x) \Big ) F^{(1)}(x) = x^{2m} F^{(2m-1)}(-x) F^{(2m+1)}(-x).
\end{equation*}
\end{proof}
We proceed with two corollaries. The first identity is known (see for instance Equation (18) in \cite{Frontczak2018b}),
the second identity is a rediscovery of \eqref{PT=P+P-P-P-T-T+P+P} with a slightly different (but equivalent) right hand side.

\begin{corollary}\label{cor1_alt_thm2}
For any $n\geq 0$ we have
\begin{equation*}
\sum_{j=0}^n (-1)^j T_j = \frac{1}{2} ((-1)^n (T_{n+1} - T_{n-1}) - 1).
\end{equation*}
\end{corollary}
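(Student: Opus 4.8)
The plan is to specialize Theorem~\ref{alternating_thm2} at $m=1$, which is legitimate for $n\ge 2m=2$, and then to reduce the resulting identity to the claimed closed form using the values of $F_j^{(1)}$, the Tribonacci partial sum formula \eqref{Tribonacci_sum}, and the Tribonacci recurrence; the single base case $n=0$ will be verified directly.

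First I would set $m=1$ in \eqref{alternating_conv2}. Since $F_j^{(3)}=T_j$, while $F_0^{(1)}=0$ and $F_k^{(1)}=1$ for $k\ge1$, the left-hand side collapses to $\sum_{j=1}^{n-1}(-1)^{j+1}(T_j-1)$ and the right-hand side becomes $(-1)^n\sum_{j=0}^{n-3}T_j$ (the index $j=n-2$ contributes nothing because $F_0^{(1)}=0$). Splitting the left-hand side and using $T_0=0$, this is a linear relation between $\sum_{j=0}^{n-1}(-1)^jT_j$, the elementary constant sum $\sum_{j=1}^{n-1}(-1)^{j+1}=\tfrac12(1+(-1)^n)$, and $\sum_{j=0}^{n-3}T_j$.

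Next I would evaluate $\sum_{j=0}^{n-3}T_j$ via \eqref{Tribonacci_sum} as $\tfrac12(T_{n-1}+T_{n-3}-1)$, substitute back, and solve for the alternating sum; the stray constants cancel and one is left with $\sum_{j=0}^{n-1}(-1)^jT_j=-\tfrac12\big(1+(-1)^n(T_{n-1}+T_{n-3})\big)$. Replacing $n$ by $n+1$ and applying the recurrence in the form $T_{n+1}-T_{n-1}=T_n+T_{n-2}$ then gives precisely $\sum_{j=0}^{n}(-1)^jT_j=\tfrac12\big((-1)^n(T_{n+1}-T_{n-1})-1\big)$ for all $n\ge1$. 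Finally, the case $n=0$ reads $0=\tfrac12(T_1-T_{-1}-1)$, which holds since $T_{-1}=0$; hence the identity is valid for all $n\ge0$.

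The only delicate point I anticipate is the bookkeeping of the several independent $-1$'s — one from subtracting $F_j^{(1)}$, one from the alternating constant sum, and one from \eqref{Tribonacci_sum} — together with the index shift $n\mapsto n+1$, making sure they combine to leave exactly the single $-\tfrac12$ in the final formula, and checking the base case $n=0$ that falls outside the hypothesis $n\ge2$ of Theorem~\ref{alternating_thm2}.
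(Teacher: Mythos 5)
Your proposal is correct and takes essentially the same route as the paper, whose entire proof is ``set $m=1$ in Theorem \ref{alternating_thm2} and simplify'': your reduction via the values of $F^{(1)}_j$, the partial sum formula \eqref{Tribonacci_sum}, the shift $n\mapsto n+1$, and the recurrence $T_{n+1}-T_{n-1}=T_n+T_{n-2}$ is exactly that simplification carried out in detail, and it checks out. Your explicit verification of the base case $n=0$ (which lies outside the theorem's hypothesis $n\ge 2$, with $T_{-1}=0$) is a small point of care that the paper leaves implicit.
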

\begin{proof}
Set $m=1$ in Theorem \ref{alternating_thm2} and simplify.
\end{proof}

\begin{corollary}\label{cor2_alt_thm2}
For any $n\geq 0$ we have
\begin{equation*}
\sum_{j=0}^{n} P_j T_{n-j} = \frac{1}{2} (-P_{n+7}+2P_{n+6}-P_{n+5}+2P_{n+4}+P_{n+3}-T_{n+4}+T_{n+2}).
\end{equation*}
\end{corollary}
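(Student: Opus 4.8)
The plan is to specialize Theorem~\ref{alternating_thm2} to $m=2$, so that $F^{(5)}_n=P_n$ and $F^{(3)}_n=T_n$, and then to eliminate the two alternating partial sums that appear. With $m=2$, identity~\eqref{alternating_conv2} reads
\begin{equation*}
\sum_{j=0}^{n-1}(-1)^{j+1}\big(P_j-T_j\big)=(-1)^{n}\sum_{j=0}^{n-4}P_jT_{n-4-j}.
\end{equation*}
Replacing $n$ by $n+4$ and multiplying through by $(-1)^{n}$ converts the right-hand side into the convolution we are after, giving
\begin{equation*}
\sum_{j=0}^{n}P_jT_{n-j}=(-1)^{n+1}\Bigg(\sum_{j=0}^{n+3}(-1)^{j}P_j-\sum_{j=0}^{n+3}(-1)^{j}T_j\Bigg),
\end{equation*}
valid for all $n\geq 0$. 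Thus the task reduces to knowing the alternating partial sums of $T_j$ and of $P_j$.

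First I would dispose of the Tribonacci term using Corollary~\ref{cor1_alt_thm2}, which gives $\sum_{j=0}^{N}(-1)^{j}T_j=\tfrac12\big((-1)^{N}(T_{N+1}-T_{N-1})-1\big)$. Taking $N=n+3$ and carrying the factor $(-1)^{n+1}$ inside, the product of signs $(-1)^{n+1}(-1)^{n+3}=-1$ kills the alternating dependence and yields a contribution $\tfrac12(T_{n+2}-T_{n+4})-\tfrac12(-1)^{n}$.

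Next I need the analogous alternating partial sum for Pentanacci numbers, which works out to
\begin{equation*}
\sum_{j=0}^{N}(-1)^{j}P_j=\frac12\Big((-1)^{N}\big(-P_{N+4}+2P_{N+3}-P_{N+2}+2P_{N+1}+P_N\big)-1\Big).
\end{equation*}
This can be proved in the generating-function spirit of the paper: the ordinary generating function of the left-hand side is $P(-x)/(1-x)$, and repeated use of the Pentanacci recurrence on the denominator $(1-x)(1+x-x^2+x^3-x^4+x^5)$ forces exactly the five-term combination with coefficients $(-1,2,-1,2,1)$; alternatively one may cite \cite{Soykan}, as was done for the Tetranacci analogue in Corollary~\ref{cor1_alt_thm}, or verify it by a short induction from $P_N=P_{N-1}+\cdots+P_{N-5}$. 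Substituting $N=n+3$ and pushing the factor $(-1)^{n+1}$ through, the sign product $(-1)^{n+1}(-1)^{n+3}=1$ again removes the alternating part and produces the contribution $\tfrac12\big(-P_{n+7}+2P_{n+6}-P_{n+5}+2P_{n+4}+P_{n+3}\big)+\tfrac12(-1)^{n}$.

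Finally I would add the two contributions: the residual terms $-\tfrac12(-1)^{n}$ and $+\tfrac12(-1)^{n}$ cancel, leaving exactly
\begin{equation*}
\sum_{j=0}^{n}P_jT_{n-j}=\frac12\big(-P_{n+7}+2P_{n+6}-P_{n+5}+2P_{n+4}+P_{n+3}-T_{n+4}+T_{n+2}\big),
\end{equation*}
which is the claim. The genuinely delicate points are (i) establishing the Pentanacci alternating partial-sum formula with the precise coefficients $(-1,2,-1,2,1)$, and (ii) keeping track of the parities of the powers of $-1$ through the two index shifts, so that the $(-1)^{n}$ contributions really do cancel rather than reinforce. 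As an independent consistency check, one can verify directly — using the Pentanacci recurrence repeatedly and $T_{n+4}=T_{n+3}+T_{n+2}+T_{n+1}$ — that the right-hand side above coincides with $\tfrac12\big(P_{n+3}+P_{n+1}+P_{n-1}-T_{n+3}-T_{n+1}\big)$, i.e.\ with the right-hand side of~\eqref{PT=P+P-P-P-T-T+P+P}; this is the sense in which the present corollary ``rediscovers'' that identity.
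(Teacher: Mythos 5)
Your proposal is correct and follows essentially the same route as the paper: set $m=2$ in Theorem~\ref{alternating_thm2}, then eliminate the two alternating partial sums via Corollary~\ref{cor1_alt_thm2} for the Tribonacci part and the Pentanacci alternating-sum formula (which the paper simply cites from \cite{Soykan2}, not \cite{Soykan}), after which the $(-1)^n$ remainders cancel exactly as you show. Only a cosmetic slip: in the Tribonacci step the literal product $(-1)^{n+1}(-1)^{n+3}$ equals $+1$, and the asserted $-1$ only arises once the minus sign from the subtraction of the two sums is included, but your stated contribution $\tfrac12(T_{n+2}-T_{n+4})-\tfrac12(-1)^n$ is correct, so the argument stands.
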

\begin{proof}
Set $m=2$ in Theorem \ref{alternating_thm2} and simplify while making use of (see \cite{Soykan2})
\begin{equation*}
\sum_{j=0}^n (-1)^j P_j = \frac{1}{2} ((-1)^n (-P_{n+4} + 2P_{n+3} - P_{n+2} + 2P_{n+1} + P_n) - 1).
\end{equation*}
\end{proof}

We conclude this section with the remark that identities involving Fibonacci $m$-step numbers and
other important number sequences can be obtained fairly easily using the generating function approach.
We give an example involving Jacobsthal numbers, which is related to identity \eqref{eq:conv_sum_JF}.

\begin{theorem}\label{Jacobsthal_thm}
Let $J_{n}$ be the Jacobsthal numbers, i.e., $J_0=0, J_1=1$, and $J_{n+2}=J_{n+1}+2J_{n}$.
Then, for $m\geq 3$ and any $n\geq 2$ we have
  \begin{equation}\label{Jacob1}
     \sum\limits_{j=0}^n F_j^{(m)} F_{n-j}^{(m-2)} = J_{n-1} + \sum\limits_{j=0}^{n-2} J_j \big (F_{n-j}^{(m-2)} - F_{n-2-j}^{(m)} \big ).
   \end{equation}
\end{theorem}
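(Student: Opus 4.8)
The plan is to follow the generating-function template used throughout Section 2. The key observation is the well-known identity for the Jacobsthal generating function, $J(x) = \sum_{n\geq 0} J_n x^n = \frac{x}{1-x-2x^2}$, and the factorization $1-x-2x^2 = (1-2x)(1+x)$. The strategy is to find a functional equation relating the product $F^{(m)}(x) F^{(m-2)}(x)$ to $J(x)$ and the difference $F^{(m-2)}(x) - F^{(m)}(x)$, which upon extracting the coefficient of $x^n$ will give \eqref{Jacob1}.

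First I would record the polynomial identity underlying the recurrences. From $1-x-x^2-\cdots-x^m = \frac{x}{F^{(m)}(x)}$, and similarly for $m-2$, we have
\begin{equation*}
\frac{x}{F^{(m)}(x)} - \frac{x}{F^{(m-2)}(x)} = -x^{m-1} - x^{m} = -x^{m-1}(1+x).
\end{equation*}
Dividing through by $x$, this reads $\frac{1}{F^{(m)}(x)} - \frac{1}{F^{(m-2)}(x)} = -x^{m-2}(1+x)$. I would then clear denominators to obtain
\begin{equation*}
F^{(m-2)}(x) - F^{(m)}(x) = -x^{m-2}(1+x)\,F^{(m)}(x)F^{(m-2)}(x).
\end{equation*}
Now the factor $(1+x)$ is exactly the ''missing'' factor to turn $1-2x$ into $1-x-2x^2$; since $J(x) = \frac{x}{(1-2x)(1+x)}$, one has $(1+x)\,\bigl(x\sum 2^n x^n\bigr) = x\cdot\frac{(1+x)}{1-2x}$, which is not quite $J(x)$, so instead I would isolate $(1+x)F^{(m)}(x)F^{(m-2)}(x)$ and multiply both sides of the functional equation by $\frac{1}{1-x-2x^2}$ after first introducing $J(x)$ cleanly. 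Concretely, writing the functional equation as $F^{(m)}(x)F^{(m-2)}(x) = \frac{F^{(m)}(x) - F^{(m-2)}(x)}{x^{m-2}(1+x)}$ and then multiplying numerator and denominator appropriately to force the denominator $1-x-2x^2 = (1-2x)(1+x)$ to appear, one gets a relation of the shape $F^{(m)}(x)F^{(m-2)}(x) = \frac{J(x)}{x^{m-1}}\bigl(\text{stuff}\bigr) + \dots$; matching this against the right-hand side of \eqref{Jacob1}, whose generating function is $\frac{x\,J(x)}{?} + J(x)\bigl(F^{(m-2)}(x) - x^2 F^{(m)}(x)\bigr)$, pins down the exact combination. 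The cleanest route is probably to start from the target RHS: its generating function is $x^2 J(x) + J(x)\bigl(F^{(m-2)}(x) - x^2 F^{(m)}(x)\bigr) = J(x)\bigl(x^2 + F^{(m-2)}(x) - x^2 F^{(m)}(x)\bigr)$ up to the index shift coming from the $J_{n-1}$ and the sum starting at $j=0$ with $n-j$ (respectively $n-2-j$), and then verify this equals $F^{(m)}(x)F^{(m-2)}(x)$ using $J(x)(1-x-2x^2) = x$ together with the functional equation above.

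The routine-but-delicate step, and the one I expect to be the main obstacle, is bookkeeping the index shifts: the left side of \eqref{Jacob1} is a plain convolution (coefficient of $x^n$ in $F^{(m)}(x)F^{(m-2)}(x)$), but on the right, $J_{n-1}$ corresponds to $x\,J(x)$, the term $\sum_{j=0}^{n-2} J_j F_{n-j}^{(m-2)}$ is the coefficient of $x^n$ in $x^2 J(x) F^{(m-2)}(x)$ (since the inner index gap between $j$ and $n-j$ is $n$, but the sum is truncated at $n-2$, contributing two correction terms), and $\sum_{j=0}^{n-2} J_j F_{n-2-j}^{(m)}$ is the coefficient of $x^{n-2}$ in $J(x)F^{(m)}(x)$, i.e. of $x^n$ in $x^2 J(x)F^{(m)}(x)$. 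So the RHS generating function is
\begin{equation*}
x\,J(x) + x^2 J(x) F^{(m-2)}(x) - x^2 J(x) F^{(m)}(x) = x\,J(x) + x^2 J(x)\bigl(F^{(m-2)}(x) - F^{(m)}(x)\bigr),
\end{equation*}
and the task reduces to checking $F^{(m)}(x)F^{(m-2)}(x) = xJ(x) + x^2 J(x)\bigl(F^{(m-2)}(x) - F^{(m)}(x)\bigr)$. Substituting the functional equation $F^{(m-2)}(x) - F^{(m)}(x) = -x^{m-2}(1+x)F^{(m)}(x)F^{(m-2)}(x)$ into the last term and using $J(x) = \frac{x}{(1-2x)(1+x)}$, the claim becomes $F^{(m)}F^{(m-2)}\bigl(1 + x^m \tfrac{x}{1-2x}\bigr) = xJ(x)$, i.e. $F^{(m)}F^{(m-2)} = \frac{xJ(x)(1-2x)}{1-2x+x^{m+1}}$; one then checks this is consistent by a second, independent manipulation of the polynomial identities (for instance expressing $F^{(m-2)}(x)$ via $\tfrac{1}{F^{(m-2)}(x)} = \tfrac{1}{F^{(m)}(x)} + x^{m-2}(1+x)$ and simplifying). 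I would present this as a single chain of equalities among rational functions, then extract coefficients and record the lower bound $n\geq 2$ (and $m\geq 3$, so that $m-2\geq 1$ and the $F^{(1)}$-type degenerate cases don't intrude) exactly as in the proofs of Theorems \ref{th:general_conv_m_step} and \ref{main_thm3}.
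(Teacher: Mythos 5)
Your overall strategy (produce a functional equation linking $F^{(m)}$, $F^{(m-2)}$ and $J$, then compare coefficients of $x^n$) is the right one, but the execution has a concrete error in the generating-function bookkeeping, and the identity you ultimately reduce the problem to is false. You read $\sum_{j=0}^{n-2} J_j F^{(m-2)}_{n-j}$ as the coefficient of $x^n$ in $x^2 J(x)F^{(m-2)}(x)$; however the indices in that sum add up to $n$, not $n-2$, so it is a truncation of the coefficient of $x^n$ in $J(x)F^{(m-2)}(x)$. Since $F^{(m-2)}_0=0$ and $F^{(m-2)}_1=1$, the truncation removes exactly the term $J_{n-1}$, which cancels the standalone $J_{n-1}$ in \eqref{Jacob1}; the correct right-hand-side generating function is therefore $J(x)F^{(m-2)}(x)-x^2J(x)F^{(m)}(x)$, not $xJ(x)+x^2J(x)\bigl(F^{(m-2)}(x)-F^{(m)}(x)\bigr)$. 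Your reduced claim cannot be rescued by the deferred ``second, independent manipulation'': substituting your (correct) relation $F^{(m-2)}(x)-F^{(m)}(x)=-x^{m-2}(1+x)F^{(m)}(x)F^{(m-2)}(x)$ into it gives $F^{(m)}(x)F^{(m-2)}(x)=\frac{x^2}{(1-x^2)(1-x-\cdots-x^m)}$, which would force $1-x-\cdots-x^{m-2}=1-x^2$; numerically, at $m=3$, $n=3$ the coefficient of $x^3$ on your proposed right-hand side is $J_2=1$, whereas $\sum_{j=0}^{3}T_jF^{(1)}_{3-j}=2$ (and the theorem's right-hand side does equal $2$).

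The fix is short and is essentially the paper's proof. The corrected target $F^{(m)}(x)F^{(m-2)}(x)=J(x)\bigl(F^{(m-2)}(x)-x^2F^{(m)}(x)\bigr)$ is equivalent to $\frac{x}{F^{(m)}(x)}=\frac{x}{J(x)}+\frac{x^3}{F^{(m-2)}(x)}$, i.e.\ to the polynomial identity $1-x-x^2-\cdots-x^m=1-x-2x^2+x^2(1-x-\cdots-x^{m-2})$, which is immediate; coefficient comparison then yields \eqref{Jacob1} exactly as in the proofs of Theorems \ref{th:general_conv_m_step} and \ref{main_thm3}. Your relation $\frac{1}{F^{(m)}(x)}-\frac{1}{F^{(m-2)}(x)}=-x^{m-2}(1+x)$ can also be massaged into this form using $(1-x)(1-x-\cdots-x^{m-2})=1-2x+x^{m-1}$, but on its own it does not produce the Jacobsthal factor $1-x-2x^2=(1-2x)(1+x)$; the key step you are missing is splitting off $1-x-2x^2$ directly from $1-x-\cdots-x^m$.
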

\begin{proof}
We have
\begin{equation*}
 1 - x - x^2 - x^3 - \cdots - x^m = 1 - x - 2x^2 + x^2(1 - x - x^2 - \cdots - x^{m-2}),
\end{equation*}
which implies
\begin{equation*}
\frac{x}{F^{(m)}(x)} = \frac{x}{J(x)} + \frac{x^3}{F^{(m-2)}(x)},
\end{equation*}
where $J(x)=\frac{x}{1-x-2x^2}$ is the ordinary generating function for Jacobsthal numbers.
\end{proof}

\begin{corollary}\label{J+T-Conv}
For any $n\geq 0$ we have
\begin{equation*}
\sum_{j=0}^n J_j T_{n-j} = J_{n+1} + \frac{1}{2} (J_{n+2} - T_{n+3} - T_{n+1}).
\end{equation*}
\end{corollary}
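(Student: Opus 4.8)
The plan is to specialize Theorem \ref{Jacobsthal_thm} to $m=3$, where $F_n^{(3)}=T_n$, and then reduce the resulting formula to the claimed closed form using two elementary partial-sum identities.

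First I would substitute $m=3$ into \eqref{Jacob1}. On the left-hand side $F_{n-j}^{(m-2)}=F_{n-j}^{(1)}$ equals $1$ for $0\le j\le n-1$ and equals $0$ for $j=n$, so the left side collapses to $\sum_{j=0}^{n-1}T_j$. On the right-hand side, for $0\le j\le n-2$ we again have $F_{n-j}^{(1)}=1$, while $F_{n-2-j}^{(3)}=T_{n-2-j}$; hence \eqref{Jacob1} becomes
\begin{equation*}
\sum_{j=0}^{n-1}T_j = J_{n-1} + \sum_{j=0}^{n-2}J_j - \sum_{j=0}^{n-2}J_j T_{n-2-j},
\end{equation*}
valid for $n\ge 2$. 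Solving for the convolution and replacing $n$ by $n+2$ yields
\begin{equation*}
\sum_{j=0}^{n}J_j T_{n-j} = J_{n+1} + \sum_{j=0}^{n}J_j - \sum_{j=0}^{n+1}T_j,
\end{equation*}
now valid for all $n\ge 0$.

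It then remains to evaluate the two ordinary partial sums. For the Jacobsthal numbers, the recurrence $J_{j+2}=J_{j+1}+2J_j$ gives $J_j=\tfrac12(J_{j+2}-J_{j+1})$, so the sum telescopes to $\sum_{j=0}^{n}J_j=\tfrac12(J_{n+2}-J_1)=\tfrac12(J_{n+2}-1)$. For the Tribonacci numbers, \eqref{Tribonacci_sum} with $n$ replaced by $n+1$ gives $\sum_{j=0}^{n+1}T_j=\tfrac12(T_{n+3}+T_{n+1}-1)$. Substituting both into the displayed identity and cancelling the constants produces
\begin{equation*}
\sum_{j=0}^{n}J_j T_{n-j} = J_{n+1} + \tfrac12\big(J_{n+2}-T_{n+3}-T_{n+1}\big),
\end{equation*}
which is the claim.

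There is essentially no serious obstacle here. The only point requiring care is the index bookkeeping in passing from \eqref{Jacob1} to the reduced identity — tracking precisely which summands involving $F^{(1)}$ vanish, over which range, and checking that the reindexing $n\mapsto n+2$ restores validity down to $n=0$ (a quick check at $n=0,1$ confirms this). Everything else is a one-line telescoping computation and a direct appeal to \eqref{Tribonacci_sum}.
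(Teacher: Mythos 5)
Your proposal is correct and follows exactly the paper's route: specialize Theorem \ref{Jacobsthal_thm} to $m=3$, then finish with the Tribonacci partial-sum formula \eqref{Tribonacci_sum} and the Jacobsthal partial sum $\sum_{j=0}^n J_j = \tfrac12(J_{n+2}-1)$. Your write-up simply makes the index bookkeeping and reindexing explicit, which the paper leaves to the reader.
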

\begin{proof}
Use the previous theorem with $m=3$ in conjunction with \eqref{Tribonacci_sum} and
\begin{equation*}
\sum_{j=0}^n J_j = \frac{1}{2} (J_{n+2} - 1).
\end{equation*}
\end{proof}

\section{Convolutions of multiple sequences}

The main goal of this section is to derive convolution identities of three and more Fibonacci $m$-step sequences.
For the convenience, we introduce the following notation:
$$K(\ell,b)=\{(k_1,\ldots,k_\ell)\in\mathbb{Z}^\ell_{\geq 0}\colon k_1+\cdots+k_\ell=b\}.$$

Before providing an analysis of the $m$-step sequences, we present a Pell sequence result. Then we consider a general convolution of three Fibonacci $m$-step sequences and derive all mixed convolutions with $2\leq m\leq 5$. Finally, we delve with the convolution of four sequences.

\subsection{A Pell-Fibonacci relation}

\begin{theorem}\label{P-Fib-conv}
Let $\mathcal{P}_{n}$ be the Pell numbers, i.e., $\mathcal{P}_0=0, \mathcal{P}_1=1$,
and $\mathcal{P}_{n+2}=2\mathcal{P}_{n+1}+\mathcal{P}_{n}$. Then, for all $n\geq 1$ and $m\geq 2$ we have the following
Pell-Fibonacci-$m$-step-relation:
\begin{equation*}
\sum_{K(3,n-1)} \mathcal{P}_{k_1} F_{k_2}^{(m-1)} F_{k_3}^{(m)}
= \sum_{j=0}^n F_j^{(m-1)} \left ( \mathcal{P}_{n-j} - F_{n-j}^{(m)}\right ) - \sum_{j=0}^{n-1} \mathcal{P}_j F_{n-1-j}^{(m)}.
\end{equation*}
\end{theorem}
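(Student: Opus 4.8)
The plan is to follow the same generating-function template used throughout the paper. First I would look for an algebraic identity relating the polynomials $1-2x-x^2$ (the reciprocal-type denominator of the Pell generating function), $1-x-\cdots-x^{m-1}$, and $1-x-\cdots-x^m$. Recall $\mathcal{P}(x)=\frac{x}{1-2x-x^2}$, so $\frac{x}{\mathcal{P}(x)}=1-2x-x^2$, while $\frac{x}{F^{(m)}(x)}=1-x-x^2-\cdots-x^m$ and similarly for $m-1$. A natural decomposition is
\begin{equation*}
1-2x-x^2 = \bigl(1-x-x^2-\cdots-x^m\bigr) + \bigl(1-x-x^2-\cdots-x^{m-1}\bigr) - 1 + x^{m}\, \text{(correction terms)},
\end{equation*}
but more promisingly, since Theorem~\ref{th:general_conv_m_step} already gives $F^{(m)}(x)-F^{(m-1)}(x)=x^{m-1}F^{(m-1)}(x)F^{(m)}(x)$, I expect the cleanest route is to combine that relation with a separate identity expressing $\mathcal{P}(x)$ in terms of $F^{(m-1)}(x)$ (or $F^{(m)}(x)$) plus lower-order pieces, then multiply through to produce a triple product $\mathcal{P}(x)F^{(m-1)}(x)F^{(m)}(x)$ on one side.

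Concretely, I would first establish, by elementary polynomial manipulation, a functional equation of the shape
\begin{equation*}
\mathcal{P}(x)F^{(m-1)}(x)F^{(m)}(x) = \bigl(\text{sum of pairwise products and single generating functions}\bigr),
\end{equation*}
where the right-hand side, after extracting the coefficient of $x^{n-1}$, yields exactly $\sum_{j=0}^{n} F_j^{(m-1)}(\mathcal{P}_{n-j}-F_{n-j}^{(m)}) - \sum_{j=0}^{n-1}\mathcal{P}_j F_{n-1-j}^{(m)}$. The left side, by the Cauchy product formula, has coefficient $\sum_{k_1+k_2+k_3=n-1}\mathcal{P}_{k_1}F_{k_2}^{(m-1)}F_{k_3}^{(m)} = \sum_{K(3,n-1)}\mathcal{P}_{k_1}F_{k_2}^{(m-1)}F_{k_3}^{(m)}$, which is the left side of the claimed identity. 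The index shift ($x^{n-1}$ rather than $x^n$) reflects the factor $x$ hidden in $\mathcal{P}(x)=x/(1-2x-x^2)$ together with one numerator $x$ from an $F^{(m)}$, consistent with the $K(3,n-1)$ on the left and the $n-j$, $n-1-j$ ranges on the right.

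The main obstacle I anticipate is \emph{finding the correct polynomial decomposition} so that the algebra collapses to precisely three terms on the right (a convolution of $F^{(m-1)}$ with $\mathcal{P}$, a convolution of $F^{(m-1)}$ with $F^{(m)}$, and a convolution of $\mathcal{P}$ with $F^{(m)}$) with the stated signs and shifts — in particular, verifying that no extra correction terms survive and that the boundary behaviour for small $n$ (using $F_0^{(m)}=0$, $\mathcal{P}_0=0$, etc., and the hypothesis $n\geq 1$) is consistent. Once the functional equation is pinned down, passing to power series and comparing coefficients of $x^{n-1}$ is routine, exactly as in the proofs of Theorems~\ref{th:general_conv_m_step} and \ref{main_thm3}. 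I would also double-check the derivation against the known special case \eqref{P-F-conv}, $\sum_j \mathcal{P}_j F_{n-j}=\mathcal{P}_n - F_n$, which should emerge (in degenerate form) when one of the $m$-step factors reduces appropriately.
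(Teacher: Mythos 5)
Your overall template---find a polynomial identity relating $1-2x-x^2$, $1-x-\cdots-x^{m-1}$ and $1-x-\cdots-x^m$, convert it into a functional equation carrying the triple product $\mathcal{P}(x)F^{(m-1)}(x)F^{(m)}(x)$ on one side, and compare coefficients---is exactly the paper's. The genuine gap is that the only nontrivial ingredient, the functional equation itself, is never produced: you leave the right-hand side as a schematic ``sum of pairwise products and single generating functions'' and explicitly defer the decomposition as ``the main obstacle I anticipate.'' Moreover, the two concrete directions you float do not land on the stated identity. The first decomposition (summing the two Fibonacci-step polynomials and subtracting $1$) leaves a residue $x^2+2x^3+\cdots+2x^{m-1}+x^m$, not a tidy correction term; and the route you call more promising, via $F^{(m)}(x)-F^{(m-1)}(x)=x^{m-1}F^{(m-1)}(x)F^{(m)}(x)$, naturally yields a triple convolution shifted by $m-1$, whereas the theorem's left side is shifted by exactly $1$ (the sum is over $K(3,n-1)$ for every $m$), so by itself it proves a different identity rather than the stated one.

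The gap is easily closed by working backwards from the claim, which forces the paper's equation: the statement is equivalent, coefficient by coefficient, to $x\,\mathcal{P}(x)F^{(m-1)}(x)F^{(m)}(x)=F^{(m-1)}(x)\bigl(\mathcal{P}(x)-F^{(m)}(x)\bigr)-x\,\mathcal{P}(x)F^{(m)}(x)$, and dividing by $\mathcal{P}(x)F^{(m-1)}(x)F^{(m)}(x)$ and clearing denominators reduces this to the elementary polynomial identity $1-x-x^2-\cdots-x^m=(1-2x-x^2)+x\bigl(1-x-x^2-\cdots-x^{m-1}\bigr)+x^2$, valid for all $m\geq 2$; equivalently $\frac{1}{F^{(m)}(x)}=\frac{1}{\mathcal{P}(x)}+x\,\frac{F^{(m-1)}(x)+1}{F^{(m-1)}(x)}$, which is precisely the paper's derivation. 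With this identity supplied, the coefficient comparison and index bookkeeping you describe go through verbatim, and the $m=2$ case degenerates to \eqref{P-F-conv} as you anticipate.
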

\begin{proof}
Let $\mathcal{P}(x)=\frac{x}{1-2x-x^2}$ denote the generating function for the Pell numbers. From
\begin{equation*}
 1 - x - x^2 - x^3 - \cdots - x^m = 1 - 2x - x^2 + x(1 - x - x^2 - \cdots - x^{m-1}) + x^2,
\end{equation*}
we get
\begin{equation*}
\frac{1}{F^{(m)}(x)} = \frac{1}{\mathcal{P}(x)} + x \frac{F^{(m-1)}(x)+1}{F^{(m-1)}(x)},
\end{equation*}
or
\begin{equation*}
x \mathcal{P}(x) F^{(m-1)}(x) F^{(m)}(x) = F^{(m-1)}(x)\left ( \mathcal{P}(x) - F^{(m)}(x)\right ) - x \mathcal{P}(x) F^{(m)}(x).
\end{equation*}
The result follows upon passing to power series and comparing the coefficients of $x^n$.
\end{proof}

As a first corollary, we rediscover identity \eqref{P-F-conv}.
\begin{corollary}
For any $n\geq 0$ we have
\begin{equation*}
\sum_{j=0}^n \mathcal{P}_j F_{n-j} = \mathcal{P}_{n} - F_{n}.
\end{equation*}
\end{corollary}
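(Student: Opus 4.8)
The plan is to specialize Theorem~\ref{P-Fib-conv} to $m=2$ and then peel off a partial-sum structure. When $m=2$ we have $F^{(m)}=F$ and $F^{(m-1)}=F^{(1)}$, where $F_0^{(1)}=0$ and $F_k^{(1)}=1$ for $k\geq 1$. The key observation is that the factor $F_{k_2}^{(1)}$ (resp.\ $F_j^{(1)}$) acts as an indicator: it annihilates the terms with $k_2=0$ (resp.\ $j=0$) and is equal to $1$ otherwise. This will collapse both the triple sum on the left and the first sum on the right into ordinary Cauchy-type convolutions.

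Concretely, I would first rewrite the left-hand side $\sum_{K(3,n-1)}\mathcal{P}_{k_1}F_{k_2}^{(1)}F_{k_3}$ by summing over $k_2\in\{1,\dots,n-1\}$ and, for each fixed $k_2$, over $k_1+k_3=n-1-k_2$; introducing the abbreviation $C_s=\sum_{j=0}^{s}\mathcal{P}_jF_{s-j}$, this gives $\sum_{k_2=1}^{n-1}C_{n-1-k_2}=\sum_{s=0}^{n-2}C_s$. Next I would simplify the first term on the right: $\sum_{j=0}^{n}F_j^{(1)}(\mathcal{P}_{n-j}-F_{n-j})=\sum_{j=1}^{n}(\mathcal{P}_{n-j}-F_{n-j})=\sum_{i=0}^{n-1}(\mathcal{P}_i-F_i)$. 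The last term on the right is simply $C_{n-1}$. Substituting these three simplifications into the identity of Theorem~\ref{P-Fib-conv} yields, for every $n\geq 1$,
\begin{equation*}
\sum_{s=0}^{n-1}C_s=\sum_{i=0}^{n-1}(\mathcal{P}_i-F_i).
\end{equation*}

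Finally, I would take the successive difference of this relation: replacing $n$ by $n+1$ and subtracting the version for $n$ gives $C_n=\mathcal{P}_n-F_n$ for all $n\geq 1$, and the case $n=0$ is immediate since $C_0=\mathcal{P}_0F_0=0=\mathcal{P}_0-F_0$. This is exactly \eqref{P-F-conv}. I do not expect any real obstacle here; the only thing requiring care is the index bookkeeping in turning the set $K(3,n-1)$ into the nested sum $\sum_{s=0}^{n-2}C_s$, and checking that the ranges match so that the telescoping in the last step is legitimate for all $n\geq 1$. (As a sanity check, one may also verify the statement directly from generating functions: $\mathcal{P}(x)F(x)=\dfrac{x^2}{(1-2x-x^2)(1-x-x^2)}=\mathcal{P}(x)-F(x)$, which is consistent with the above.)
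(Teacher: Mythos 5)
Your proposal is correct and matches the paper's proof, which likewise just sets $m=2$ in Theorem~\ref{P-Fib-conv} and simplifies; your explicit collapse of the $F^{(1)}$ factors and the final telescoping step are exactly the "simplify" the paper leaves implicit. The generating-function sanity check $\mathcal{P}(x)F(x)=\mathcal{P}(x)-F(x)$ is also consistent with the identity the paper uses later in the same section.
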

\begin{proof}
Set $m=2$ in Theorem \ref{P-Fib-conv} and simplify.
\end{proof}

\begin{corollary}
For any $n\geq 0$ we have
\begin{equation*}
\sum_{K(3,n)} \mathcal{P}_{k_1} F_{k_2} T_{k_3} = \mathcal{P}_{n+1} + F_{n+2} - T_{n+3} - \sum_{j=0}^{n} \mathcal{P}_j T_{n-j}.
\end{equation*}
or
\begin{equation}\label{Pell-F-T}
\sum_{K(3,n)} \mathcal{P}_{k_1} F_{k_2} T_{k_3} = \frac{1}{2}\left ( \mathcal{P}_{n+1} - T_{n+3} - T_{n+2}\right ) + F_{n+2}.
\end{equation}
\end{corollary}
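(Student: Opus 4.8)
The plan is to specialize Theorem~\ref{P-Fib-conv} to $m=3$, where $F^{(m-1)}=F$ and $F^{(m)}=T$, and then shift the index. Writing out the theorem in this case and replacing $n$ by $n+1$ (legitimate since the hypothesis $n\geq 1$ becomes $n+1\geq 1$, i.e.\ $n\geq 0$) gives
\[
\sum_{K(3,n)} \mathcal{P}_{k_1} F_{k_2} T_{k_3}
= \sum_{j=0}^{n+1} F_j\bigl(\mathcal{P}_{n+1-j}-T_{n+1-j}\bigr) - \sum_{j=0}^{n}\mathcal{P}_j T_{n-j}.
\]
Next I would evaluate the two inner convolutions separately: $\sum_{j=0}^{n+1}F_j\mathcal{P}_{n+1-j}=\mathcal{P}_{n+1}-F_{n+1}$ by \eqref{P-F-conv}, and $\sum_{j=0}^{n+1}F_jT_{n+1-j}=T_{n+3}-F_{n+3}$ by \eqref{eq:T=F+FT}. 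Substituting these and using the Fibonacci recurrence $F_{n+3}-F_{n+1}=F_{n+2}$ collapses the right-hand side to $\mathcal{P}_{n+1}+F_{n+2}-T_{n+3}-\sum_{j=0}^{n}\mathcal{P}_j T_{n-j}$, which is the first displayed identity.

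To pass to the second (equivalent) form I would first record the auxiliary Pell--Tribonacci convolution
\[
\sum_{j=0}^n\mathcal{P}_j T_{n-j}=\tfrac12\bigl(\mathcal{P}_{n+1}-T_{n+1}-T_n\bigr).
\]
This can be obtained in the spirit of the rest of the paper from the decomposition $1-x-x^2-x^3=(1-2x-x^2)+x(1-x^2)$, which yields $1/T(x)=1/\mathcal{P}(x)+(1-x^2)$, i.e.\ $(1-x^2)\mathcal{P}(x)T(x)=\mathcal{P}(x)-T(x)$. Comparing coefficients of $x^n$ gives the two-step recurrence $c_n-c_{n-2}=\mathcal{P}_n-T_n$ for $c_n:=\sum_{j=0}^n\mathcal{P}_j T_{n-j}$, together with $c_0=c_1=0$; one then checks that the proposed closed form satisfies this recurrence using $\mathcal{P}_{n+1}-\mathcal{P}_{n-1}=2\mathcal{P}_n$ and $T_{n+1}-T_{n-1}=T_n+T_{n-2}$. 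Inserting this closed form into the first identity and simplifying with $T_{n+3}=T_{n+2}+T_{n+1}+T_n$ produces $\tfrac12\bigl(\mathcal{P}_{n+1}-T_{n+3}-T_{n+2}\bigr)+F_{n+2}$, as claimed.

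The only delicate points are bookkeeping rather than conceptual: tracking the index shifts so that the corollary's range $n\geq 0$ is consistent with Theorem~\ref{P-Fib-conv}'s range $n\geq 1$, and noting that both sides vanish for $n\leq 2$ (since a nonzero summand of $\sum_{K(3,n)}$ needs $k_1,k_2,k_3\geq 1$, hence $n\geq 3$), which one may record as a sanity check. The substantive inputs are simply the recognition of which already-established convolutions to feed in: \eqref{P-F-conv}, \eqref{eq:T=F+FT}, and the auxiliary Pell--Tribonacci identity above. I do not anticipate a genuine obstacle; the potential pitfall is an off-by-one in the shifts or a sign slip when simplifying the Tribonacci terms.
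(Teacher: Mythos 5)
Your proof is correct and follows essentially the same route as the paper: setting $m=3$ in Theorem~\ref{P-Fib-conv} and feeding in \eqref{P-F-conv} and \eqref{eq:T=F+FT} for the first form, and using the decomposition $1-x-x^2-x^3=(1-2x-x^2)+x(1-x^2)$, i.e.\ $(1-x^2)\mathcal{P}(x)T(x)=\mathcal{P}(x)-T(x)$, to establish the auxiliary identity $\sum_{j=0}^{n}\mathcal{P}_jT_{n-j}=\frac{1}{2}\left(\mathcal{P}_{n+1}-T_{n+1}-T_{n}\right)$ for the second. The only (harmless) deviation is in how that auxiliary identity is extracted: you read off the coefficient recurrence $c_n-c_{n-2}=\mathcal{P}_n-T_n$ with $c_0=c_1=0$ and verify the closed form inductively, whereas the paper expands $x/(1-x^2)$ in partial fractions and invokes known partial and alternating sum formulas for the Pell and Tribonacci numbers.
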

\begin{proof}
Set $m=3$ in Theorem \ref{P-Fib-conv}, use \eqref{F-T-conv} and \eqref{P-F-conv}, and simplify.
To get \eqref{Pell-F-T}, let $R(x)=\frac{x}{1-x^2}$ and notice that
$$\frac{1}{T(x)}=\frac{1}{\mathcal{P}(x)}+x\cdot\frac{1}{R(x)},$$
which is equivalent to
$$x\mathcal{P}(x)T(x)=R(x)(\mathcal{P}(x)-T(x)).$$
Using
$$R(x) = \frac{x}{2}\left(\frac{1}{1-x}+\frac{1}{1+x}\right)$$
and passing to power series, we get
\begin{equation*}
\sum_{j=0}^{n} \mathcal{P}_j T_{n-j} = \frac{1}{2}\sum_{j=0}^n \mathcal{P}_j + \frac{1}{2}\sum_{j=0}^n (-1)^{n-j} \mathcal{P}_j
- \frac{1}{2}\sum_{j=0}^n T_j - \frac{1}{2}\sum_{j=0}^n (-1)^{n-j} T_j.
\end{equation*}
Recall the well-known identities for Pell numbers (see for example \cite{Bradie2010}):
\begin{equation*}
\sum_{j=0}^n \mathcal{P}_j = \frac{1}{2} \left ( \mathcal{P}_{n+1} + \mathcal{P}_n - 1\right ) \quad\mbox{and}\quad
\sum_{j=0}^n (-1)^j \mathcal{P}_j = \frac{1}{2} \left ((-1)^n ( \mathcal{P}_{n+1} - \mathcal{P}_n) - 1\right ).
\end{equation*}

These relations show that
\begin{equation*}
\sum_{j=0}^{n} \mathcal{P}_j T_{n-j} = \frac{1}{2}\left ( \mathcal{P}_{n+1} - T_{n+1} - T_{n}\right )
\end{equation*}
and \eqref{Pell-F-T} follows.
\end{proof}

From
\begin{equation*}
\mathcal{P}(x) F(x) = \mathcal{P}(x) - F(x),
\end{equation*}
we get (by induction) for all $r\geq 1$

\begin{equation*}
\mathcal{P}(x) F^r(x) = \mathcal{P}(x) - \sum_{s=1}^r F^s(x),
\end{equation*}
or equivalently
\begin{equation*}
\sum_{K(r+1,n)} \mathcal{P}_{k_1} F_{k_2} F_{k_3} \cdots F_{k_{r+1}} = \mathcal{P}_n - \sum_{s=1}^r \sum_{K(s,n)} {F}_{k_1} \cdots F_{k_{s}}.
\end{equation*}

Special cases of this convolution include (see Zhang's paper \cite{Zhang} for the Fibonacci convolutions)
\begin{equation*}
\sum_{K(3,n)} \mathcal{P}_{k_1} F_{k_2} F_{k_3} = \mathcal{P}_n - F_n - \frac{1}{5}\left ( (n-1)F_n + 2n F_{n-1}\right) \quad (n\geq 1)
\end{equation*}
as well as
\begin{align}
\sum_{K(4,n)} \mathcal{P}_{k_1} F_{k_2} F_{k_3} F_{k_4} &= \mathcal{P}_n - F_n - \frac{1}{5}\left ( (n-1)F_n + 2n F_{n-1}\right) \nonumber \\
& \quad - \frac{1}{50}\left ((5n^2-9n-2) F_{n-1} + (5n^2-3n-2) F_{n-2}\right ) \quad (n\geq 2).
\end{align}

\subsection{Convolution of three Fibonacci $m$-step sequences}

The next theorem is the main observation and will be used as a basis for some observations what will follow.

\begin{theorem}\label{main_thm_mult}
Let $m,p,q\geq 1$ be integers. Then the following functional identity holds true:
\begin{align}
& x^{2m+p} (1-x^p)(1-x^q)F^{(m)}(x)F^{(m+p)}(x)F^{(m+p+q)}(x) \nonumber \\
\qquad &= x^m (1-x)(1-x^p) F^{(m)}(x)F^{(m+p+q)}(x) - (1-x)^2 (F^{(m+p)}(x) - F^{(m)}(x)).
\end{align}
In particular,
\begin{equation}\label{eq:FmFm+1Fm+2=FmFM+2-Fm+1+Fm}
x^{2m+1} F^{(m)}(x)F^{(m+1)}(x)F^{(m+2)}(x) = x^m F^{(m)}(x)F^{(m+2)}(x) - F^{(m+1)}(x) + F^{(m)}(x).
\end{equation}
\end{theorem}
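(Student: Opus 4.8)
The plan is to reduce the three-fold product identity to the two-term functional relation that already underlies Theorem~\ref{thm:gen_m+p_conv}, applied with two different choices of parameters, followed by a short algebraic rearrangement.

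First I would record that relation in a form adapted to the present statement. Starting from $x/F^{(m)}(x)=1-x-x^2-\cdots-x^m$ and subtracting the tail
\[
x^{m+1}+x^{m+2}+\cdots+x^{m+p}=x^{m+1}\,\frac{1-x^p}{1-x},
\]
one obtains $x/F^{(m+p)}(x)=x/F^{(m)}(x)-x^{m+1}(1-x^p)/(1-x)$; multiplying through by $F^{(m)}(x)F^{(m+p)}(x)/x$ gives
\[
(1-x)\bigl(F^{(m+p)}(x)-F^{(m)}(x)\bigr)=x^m(1-x^p)\,F^{(m)}(x)F^{(m+p)}(x).
\]
This is nothing but \eqref{eq:sumF^mF^m+p=F^m+p-F^m} (equivalently the generating-function identity in the proof of Theorem~\ref{thm:gen_m+p_conv}) rewritten after clearing the finite geometric sum; for $p=1$ it reduces to \eqref{eq:F^m+1-F^m-x^mF^m+1F^m}.

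Next I would apply this identity twice: once with the pair $(m,p)$, after multiplying by an extra factor $(1-x)$,
\[
(1-x)^2\bigl(F^{(m+p)}(x)-F^{(m)}(x)\bigr)=x^m(1-x)(1-x^p)\,F^{(m)}(x)F^{(m+p)}(x),
\]
and once with the pair $(m+p,q)$,
\[
(1-x)\bigl(F^{(m+p+q)}(x)-F^{(m+p)}(x)\bigr)=x^{m+p}(1-x^q)\,F^{(m+p)}(x)F^{(m+p+q)}(x).
\]
Now I would start from the right-hand side of the asserted identity. Using the first display to replace the term $(1-x)^2\bigl(F^{(m+p)}(x)-F^{(m)}(x)\bigr)$ and then pulling out the common factor $x^m(1-x)(1-x^p)F^{(m)}(x)$ leaves $x^m(1-x)(1-x^p)F^{(m)}(x)\bigl(F^{(m+p+q)}(x)-F^{(m+p)}(x)\bigr)$; finally substituting the second display for $(1-x)\bigl(F^{(m+p+q)}(x)-F^{(m+p)}(x)\bigr)$ turns this into $x^{2m+p}(1-x^p)(1-x^q)F^{(m)}(x)F^{(m+p)}(x)F^{(m+p+q)}(x)$, which is the left-hand side. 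The displayed special case \eqref{eq:FmFm+1Fm+2=FmFM+2-Fm+1+Fm} then follows at once by putting $p=q=1$ and cancelling the nonzero common factor $(1-x)^2$.

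There is no real obstacle in this argument: every quantity in sight is an honest formal power series (or rational function) and all steps are multiplications, so no cancellation issue arises. The only thing requiring care is the bookkeeping of the factors $(1-x)$, $(1-x^p)$ and $(1-x^q)$ and choosing the two parameter specializations $(m,p)$ and $(m+p,q)$ so that the telescoping goes through cleanly.
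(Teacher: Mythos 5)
Your proof is correct and follows essentially the same route as the paper: both arguments rest on the two-sequence relation \eqref{Eq:Fab-Fa=xFaFab} applied twice, with parameters $(m,p)$ and $(m+p,q)$, together with the conversion of the finite geometric sums into the rational factors $x^m(1-x^p)/(1-x)$ and $x^{m+p}(1-x^q)/(1-x)$ — you simply clear these denominators at the outset and verify the identity by back-substitution from the right-hand side, whereas the paper rearranges forward, which is only a cosmetic difference. Your final cancellation of $(1-x)^2$ to obtain \eqref{eq:FmFm+1Fm+2=FmFM+2-Fm+1+Fm} is legitimate since $1-x$ is invertible in the ring of formal power series.
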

\begin{proof}
We utilize the proof of Theorem \ref{thm:gen_m+p_conv} and work with the identity:
  \begin{equation}\label{Eq:Fab-Fa=xFaFab}
      F^{(a+b)}(x)-F^{(a)}(x)=\left(\sum\limits_{k=0}^{b-1}x^{a+k}\right)F^{(a)}(x)F^{(a+b)}(x).
  \end{equation}
Applying it twice we get:
   \begin{align}
    F^{(m+p)}(x) &= \frac{F^{(m)}(x)}{1-\left(\sum\limits_{k=0}^{p-1}x^{m+k}\right)F^{(m)}(x)}, \nonumber\\
    F^{(m+p+q)}(x)-F^{(m+p)}(x) &= \left(\sum\limits_{k=0}^{q-1}x^{m+p+k}\right)F^{(m+p)}(x)F^{(m+p+q)}(x).\label{eq:sec3_eq1}
    \end{align}
Hence, we obtain
   \begin{equation*}
    F^{(m+p+q)}(x)-\frac{F^{(m)}(x)}{1-\left(\sum\limits_{k=0}^{p-1}x^{m+k}\right)F^{(m)}(x)}
		=\left(\sum\limits_{k=0}^{q-1}x^{m+p+k}\right)F^{(m+p)}(x)F^{(m+p+q)}(x),
    \end{equation*}
or, after rearranging,
  \begin{align*}
  & \left(\sum\limits_{k=0}^{p-1} x^{m+k}\right) \left(\sum\limits_{k=0}^{q-1} x^{m+p+k}\right) F^{(m)}(x)F^{(m+p)}(x)F^{(m+p+q)}(x) \\
	& \qquad = \left(\sum\limits_{k=0}^{q-1} x^{m+p+k}\right) F^{(m+p)}(x)F^{(m+p+q)}(x) \\
  & \qquad\qquad + \left(\sum\limits_{k=0}^{p-1} x^{m+k}\right) F^{(m)}(x)F^{(m+p+q)}(x) - F^{(m+p+q)}(x) + F^{(m)}(x).
  \end{align*}
Simplifying using \eqref{eq:sec3_eq1} yields
  \begin{align*}
  & \left(\sum\limits_{k=0}^{p-1} x^{m+k}\right) \left(\sum\limits_{k=0}^{q-1} x^{m+p+k}\right) F^{(m)}(x)F^{(m+p)}(x)F^{(m+p+q)}(x) \\
	& \qquad = \left(\sum\limits_{k=0}^{p-1} x^{m+k}\right) F^{(m)}(x)F^{(m+p+q)}(x) - F^{(m+p)}(x) + F^{(m)}(x).
  \end{align*}
Finally, from the geometric series
\begin{equation*}
\left(\sum_{k=0}^{p-1} x^{m+k}\right) \left(\sum_{k=0}^{q-1} x^{m+p+k}\right) = x^{2m+p} \frac{(1-x^p)(1-x^q)}{(1-x)^2}, \quad
\sum_{k=0}^{p-1} x^{m+k}= x^m \frac{1-x^p}{1-x}
\end{equation*}
and the functional equation follows. The particular case belongs to $p=q=1$.
\end{proof}

The special case of Theorem \ref{main_thm_mult} leads to yet another identity involving triple convolutions.
Namely, we have the following.
\begin{theorem}\label{th:x^mF^mF^m+1F^p}
    For any $p\geq 1$ and any $m\geq 1$ we have
		\begin{equation*}
    x^mF^{(m)}(x)F^{(m+1)}(x)F^{(p)}(x)=F^{(p)}(x)F^{(m+1)}(x)-F^{(p)}(x)F^{(m)}(x).
		\end{equation*}
    In particular,
    \begin{equation*}
		\sum\limits_{K(3,n-m)}F^{(m)}_{k_1}F^{(m+1)}_{k_2}F^{(p)}_{k_3}
		= \sum\limits_{j=0}^{n}F^{(p)}_j F^{(m+1)}_{n-j} - \sum\limits_{j=0}^n F^{(p)}_j F^{(m)}_{n-j}.
		\end{equation*}
\end{theorem}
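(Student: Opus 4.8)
The plan is to read off the functional identity directly from equation~\eqref{eq:F^m+1-F^m-x^mF^m+1F^m}, which was already established inside the proof of Theorem~\ref{th:general_conv_m_step} (it is also the case $a=m$, $b=1$ of \eqref{Eq:Fab-Fa=xFaFab}), and then to extract the convolution identity by comparing coefficients of $x^n$.

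First I would recall that $F^{(m+1)}(x)-F^{(m)}(x)=x^{m}F^{(m)}(x)F^{(m+1)}(x)$. Multiplying both sides of this relation by the formal power series $F^{(p)}(x)$ and moving the two left-hand terms across gives
$$x^{m}F^{(m)}(x)F^{(m+1)}(x)F^{(p)}(x)=F^{(p)}(x)F^{(m+1)}(x)-F^{(p)}(x)F^{(m)}(x),$$
which is precisely the asserted functional identity. It is worth stressing that the step $p$ plays no active role whatsoever: nothing is used about $F^{(p)}(x)$ beyond its being a power series, so the same manipulation would work with any power series in place of $F^{(p)}(x)$.

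Next I would pass to power series and equate the coefficients of $x^{n}$. Since $F^{(\mu)}_{0}=0$ for every $\mu\geq 1$ (immediate from the initial conditions), each factor is $x$ times a power series, which is a minor convenience but not essential. On the left, the prefactor $x^{m}$ shifts indices, so the coefficient of $x^{n}$ equals $\sum_{k_{1}+k_{2}+k_{3}=n-m}F^{(m)}_{k_{1}}F^{(m+1)}_{k_{2}}F^{(p)}_{k_{3}}$, i.e.\ the sum over $K(3,n-m)$. On the right, the Cauchy product yields $\sum_{j=0}^{n}F^{(p)}_{j}F^{(m+1)}_{n-j}-\sum_{j=0}^{n}F^{(p)}_{j}F^{(m)}_{n-j}$. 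Equating the two gives the stated identity, valid for all $n\geq 0$ with the usual convention that an empty index set $K(3,b)$ with $b<0$ contributes $0$; consistently, the right-hand side vanishes for $n<m$, because $F^{(m)}_{k}=F^{(m+1)}_{k}$ for $k\leq m+1$.

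There is essentially no obstacle here: the whole content is the algebraic rearrangement of \eqref{eq:F^m+1-F^m-x^mF^m+1F^m}. The only point requiring a moment of care is the bookkeeping of the $x^{m}$ shift that converts the index set $K(3,n)$ of the raw Cauchy product into $K(3,n-m)$, together with a quick check that the small-$n$ boundary behaviour is consistent with the ranges claimed.
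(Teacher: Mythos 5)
Your proposal is correct and follows exactly the paper's route: the paper's proof is the one-line instruction to use identity \eqref{Eq:Fab-Fa=xFaFab} (with $b=1$, i.e.\ \eqref{eq:F^m+1-F^m-x^mF^m+1F^m}), multiply through by $F^{(p)}(x)$, and compare coefficients, which is precisely what you do. Your extra remarks on the $x^m$ shift and the small-$n$ consistency (using $F^{(m)}_k=F^{(m+1)}_k$ for $k\leq m$) are accurate bookkeeping that the paper leaves implicit.
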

\begin{proof}
   Use identity \eqref{Eq:Fab-Fa=xFaFab}.
\end{proof}

We apply Theorem \ref{main_thm_mult} with $m=2$ and $p=q=1$. This choice results in several different functional equations,
the penultimate one coming directly from the theorem.
\begin{align*}
x^5 F(x) T(x) Q(x) &= x^2 F(x) x^3 T(x) Q(x) \\
&= x^2 F(x) (Q(x) - T(x)) \\
&= x^2 F(x) Q(x) - T(x) + F(x) \\
&= x^2 F(x) Q(x) + x^3 T(x) Q(x) - Q(x) + F(x).
\end{align*}

\begin{theorem}\label{thrm:FTQ=Q+Q-T+F}
  We have for each $n\geq 0$,
  \begin{equation}\label{eq:FTQ=Q+Q-T+F}
    \sum\limits_{K(3,n)} F_{k_1} T_{k_2} Q_{k_3} = Q_{n+4} + Q_{n+2} - T_{n+5} + F_{n+3}.
  \end{equation}
\end{theorem}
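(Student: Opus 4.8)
The plan is to read off a single coefficient from the functional equation that was already assembled just before the statement, namely
\begin{equation*}
x^5 F(x) T(x) Q(x) = x^2 F(x) Q(x) + x^3 T(x) Q(x) - Q(x) + F(x),
\end{equation*}
obtained from Theorem \ref{main_thm_mult} with $m=2$ and $p=q=1$. First I would expand the left-hand side: since $F(x)T(x)Q(x) = \sum_{n\geq 0}\big(\sum_{K(3,n)} F_{k_1}T_{k_2}Q_{k_3}\big)x^n$, the coefficient of $x^{n+5}$ on the left is precisely $\sum_{K(3,n)} F_{k_1}T_{k_2}Q_{k_3}$, the quantity we want to evaluate.

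Next I would extract the coefficient of $x^{n+5}$ from each summand on the right. The term $x^2F(x)Q(x)$ contributes $[x^{n+3}]F(x)Q(x)=\sum_{j=0}^{n+3}F_jQ_{n+3-j}$, which by \eqref{FQ=Q+Q-F} equals $Q_{n+4}+Q_{n+2}-F_{n+4}$. The term $x^3T(x)Q(x)$ contributes $[x^{n+2}]T(x)Q(x)=\sum_{j=0}^{n+2}T_jQ_{n+2-j}$, which by \eqref{eq:Q=T+TQ} equals $Q_{n+5}-T_{n+5}$. Finally $-Q(x)$ and $F(x)$ contribute $-Q_{n+5}$ and $F_{n+5}$, respectively.

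Summing the four contributions gives
\begin{equation*}
\sum_{K(3,n)} F_{k_1}T_{k_2}Q_{k_3}
= Q_{n+4}+Q_{n+2}-F_{n+4}+Q_{n+5}-T_{n+5}-Q_{n+5}+F_{n+5},
\end{equation*}
after which the two $Q_{n+5}$ terms cancel and $F_{n+5}-F_{n+4}=F_{n+3}$ by the Fibonacci recurrence, producing the asserted value $Q_{n+4}+Q_{n+2}-T_{n+5}+F_{n+3}$. I do not expect any genuine obstacle: the argument is purely mechanical once the functional equation and the two previously proved convolution formulas \eqref{FQ=Q+Q-F} and \eqref{eq:Q=T+TQ} are in hand. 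The only thing that needs care is keeping the index shifts straight (reading off $x^{n+5}$ rather than $x^n$, because of the factor $x^5$ on the left) and matching each of the two double convolutions with the correct corollary; equivalently one may extract the coefficient of $x^n$ and then replace $n$ by $n+5$ at the end, which yields the same identity.
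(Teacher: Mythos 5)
Your proof is correct and follows essentially the same route as the paper: both extract the coefficient of $x^{n+5}$ from the functional-equation chain displayed just before the theorem and simplify using the known two-sequence convolutions. The only (immaterial) difference is that the paper works with the penultimate form $x^5F(x)T(x)Q(x)=x^2F(x)Q(x)-T(x)+F(x)$, needing only \eqref{FQ=Q+Q-F}, whereas you use the last form and additionally invoke \eqref{eq:Q=T+TQ}, whose $Q_{n+5}$ contributions cancel.
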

\begin{proof}
Work with
$$x^5 F(x) T(x) Q(x) = x^2 F(x) Q(x) -T(x)+F(x).$$
When passing to the power series and comparing the coefficients of $x^n$ use the identity \eqref{FQ=Q+Q-F} and simplify.
\end{proof}

Working with $m=3$ and $p=q=1$ in Theorem \ref{main_thm_mult} results in the functional equations, where again,
the penultimate one coming directly from the theorem.
\begin{align*}
x^7 T(x) Q(x) P(x) &= x^3 T(x) x^4 Q(x) P(x) \\
&= x^3 T(x) (P(x) - Q(x)) \\
&= x^3 T(x) P(x) - Q(x) + T(x) \\
&= x^3 T(x) P(x) + x^4 P(x) Q(x) - P(x) + T(x),
\end{align*}
and this gives the next convolution.

\begin{theorem}
    We have
		\begin{equation*}
    \sum_{K(3,n-7)} T_{k_1} Q_{k_2} P_{k_3} = \frac{1}{2}(P_n + P_{n-2} + P_{n-4} + T_n - T_{n-2}) - Q_n
		\end{equation*}
    valid for each $n\geq 7$.
\end{theorem}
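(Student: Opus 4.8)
The plan is to read the identity straight off the functional equation
$$x^7 T(x) Q(x) P(x) = x^3 T(x) P(x) - Q(x) + T(x),$$
which is displayed immediately before the statement (it is the $m=3$, $p=q=1$ instance of the particular case \eqref{eq:FmFm+1Fm+2=FmFM+2-Fm+1+Fm} of Theorem \ref{main_thm_mult}), and then to eliminate the leftover binary convolution of Tribonacci and Pentanacci numbers using \eqref{PT=P+P-P-P-T-T+P+P}.

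First I would pass to power series and compare coefficients of $x^n$. Since
$$T(x)Q(x)P(x) = \sum_{n\geq 0}\Big(\sum_{K(3,n)} T_{k_1}Q_{k_2}P_{k_3}\Big)x^n,$$
multiplying by $x^7$ makes the coefficient of $x^n$ on the left equal to $\sum_{K(3,n-7)} T_{k_1}Q_{k_2}P_{k_3}$ (which is $0$ for $n<7$). On the right, the coefficient of $x^n$ in $x^3 T(x)P(x)$ is $\sum_{j=0}^{n-3} T_j P_{n-3-j}$, and $-Q(x)+T(x)$ contributes $T_n - Q_n$. Hence, for every $n\geq 0$,
$$\sum_{K(3,n-7)} T_{k_1}Q_{k_2}P_{k_3} = \sum_{j=0}^{n-3} T_j P_{n-3-j} + T_n - Q_n.$$

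Next I would apply \eqref{PT=P+P-P-P-T-T+P+P}, i.e. $\sum_{j=0}^{N} P_j T_{N-j} = \frac{1}{2}(P_{N+3}+P_{N+1}+P_{N-1}-T_{N+3}-T_{N+1})$, with $N=n-3$; since $n\geq 7$ gives $N\geq 4$, no negatively indexed terms intervene. This turns $\sum_{j=0}^{n-3} T_j P_{n-3-j}$ into $\frac{1}{2}(P_n+P_{n-2}+P_{n-4}-T_n-T_{n-2})$. Substituting and collecting the $T_n$-terms ($-\frac{1}{2}T_n+T_n=\frac{1}{2}T_n$) produces exactly
$$\sum_{K(3,n-7)} T_{k_1}Q_{k_2}P_{k_3} = \frac{1}{2}\big(P_n+P_{n-2}+P_{n-4}+T_n-T_{n-2}\big)-Q_n,$$
which is the claim.

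There is essentially no genuine obstacle here, as every ingredient is already available; the only thing to watch is the bookkeeping of the index shifts (the $x^7$ and $x^3$ prefactors, and the substitution $N=n-3$) together with the hypothesis $n\geq 7$, which simultaneously guarantees that the triple-sum index set $K(3,n-7)$ is the intended one and that \eqref{PT=P+P-P-P-T-T+P+P} is being invoked well within its stated range.
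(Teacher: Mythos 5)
Your proposal is correct and follows exactly the paper's own route: extract the coefficient of $x^n$ from the functional equation $x^7T(x)Q(x)P(x)=x^3T(x)P(x)-Q(x)+T(x)$ and eliminate the remaining Tribonacci--Pentanacci convolution via \eqref{PT=P+P-P-P-T-T+P+P}. Your write-up simply makes explicit the ``minor simplification'' and index bookkeeping that the paper leaves to the reader.
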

\begin{proof}
    Using the identity
    $$x^7 T(x) Q(x) P(x)=x^3 T(x) P(x) - Q(x) + T(x)$$
    in conjunction with \eqref{PT=P+P-P-P-T-T+P+P} we obtain, after minor simplification, the desired result.
\end{proof}

In the following two results we find convolution sums of the remaining triples with $2\leq m\leq 5$ using Theorem \ref{th:x^mF^mF^m+1F^p}.
\begin{theorem}\label{th:FQP_conv}
    We have the following identity:
    \begin{equation}\label{eq:conv_FQP}
		\sum\limits_{K(3,n-5)} F_{k_1} Q_{k_2} P_{k_3} = - Q_n - Q_{n-2} + \frac{1}{2}\big(P_{n+1} + P_{n-2} - F_{n+1} \big) + F_n.
		\end{equation}
\end{theorem}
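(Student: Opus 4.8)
The plan is to invoke Theorem~\ref{th:x^mF^mF^m+1F^p} with the parameters chosen so that the three generating functions that occur are precisely $F(x)$, $Q(x)$ and $P(x)$. Since $Q=F^{(4)}$ and $P=F^{(5)}$ are consecutive Fibonacci $m$-step sequences, I would take $m=4$ (so $F^{(m)}=Q$ and $F^{(m+1)}=P$) and $p=2$ (so $F^{(p)}=F$). The theorem then yields the functional identity
\[
x^4 F(x) Q(x) P(x) = F(x) P(x) - F(x) Q(x).
\]

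Next I would pass to power series and compare coefficients of $x^n$ on both sides. The factor $x^4$ shifts the triple convolution, so the coefficient on the left is $\sum_{K(3,n-4)} F_{k_1} Q_{k_2} P_{k_3}$, while on the right it is $\sum_{j=0}^{n} F_j P_{n-j} - \sum_{j=0}^{n} F_j Q_{n-j}$. Hence
\[
\sum_{K(3,n-4)} F_{k_1} Q_{k_2} P_{k_3} = \sum_{j=0}^{n} F_j P_{n-j} - \sum_{j=0}^{n} F_j Q_{n-j}.
\]

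Then I would substitute the two already established closed forms for the inner convolutions: identity~\eqref{FP=P+P=F} gives $\sum_{j=0}^n F_j P_{n-j} = \frac{1}{2}\big(P_{n+2}+P_{n-1}-F_{n+2}\big)$, and identity~\eqref{FQ=Q+Q-F} gives $\sum_{j=0}^n F_j Q_{n-j} = Q_{n+1}+Q_{n-1}-F_{n+1}$. Combining these produces a closed form for $\sum_{K(3,n-4)} F_{k_1} Q_{k_2} P_{k_3}$; finally, replacing $n$ by $n-1$ rewrites the left-hand side as $\sum_{K(3,n-5)} F_{k_1} Q_{k_2} P_{k_3}$ and the right-hand side as $-Q_n-Q_{n-2}+\frac{1}{2}\big(P_{n+1}+P_{n-2}-F_{n+1}\big)+F_n$, which is exactly~\eqref{eq:conv_FQP}.

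There is no genuine obstacle in this argument; the only points requiring care are choosing $m$ and $p$ so that the three sequences align with $F$, $Q$, $P$ (rather than accidentally producing a $T$-term), tracking the $x^4$-induced index shift when reading off coefficients, and carrying out the final substitution $n\mapsto n-1$ consistently across both sides. As a sanity check one can confirm the identity numerically for a few small values of $n$.
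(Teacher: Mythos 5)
Your proposal is correct and follows exactly the paper's own route: apply Theorem~\ref{th:x^mF^mF^m+1F^p} with $m=4$, $p=2$, pass to power series, substitute the closed forms \eqref{FQ=Q+Q-F} and \eqref{FP=P+P=F}, and shift $n\mapsto n-1$ to match \eqref{eq:conv_FQP}. No gaps; the index bookkeeping you flag is handled correctly.
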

\begin{proof}
Use Theorem \ref{th:x^mF^mF^m+1F^p} with $m=4$ and $p=2$, pass to power series, apply \eqref{FQ=Q+Q-F} and \eqref{FP=P+P=F}, and simplify.
\end{proof}

\begin{theorem}\label{th:FTP_conv}
    We have the following identity:
    $$\sum\limits_{K(3,n-5)}F_{k_1}T_{k_2}P_{k_3}=\frac{1}{2}\big(P_n-P_{n-1}+P_{n-2}-T_n-T_{n-2}+F_{n-1}\big).$$
\end{theorem}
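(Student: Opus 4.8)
The plan is to imitate exactly the proofs of Theorems~\ref{th:FQP_conv} and the $TQP$ result: start from Theorem~\ref{th:x^mF^mF^m+1F^p}, specialize it, pass to power series, and substitute the double-convolution closed forms already established in Corollary~\ref{cor:conv_FQ_FP_TP}. Concretely, I would apply Theorem~\ref{th:x^mF^mF^m+1F^p} with $m=2$ and $p=5$. Since $F^{(2)}=F$, $F^{(3)}=T$, $F^{(5)}=P$, the functional identity reads
\begin{equation*}
x^2 F(x)T(x)P(x) = P(x)T(x) - P(x)F(x).
\end{equation*}
Equivalently, using the ``in particular'' statement of that theorem with $m=2$, $p=5$ and then replacing $n$ by $n-3$ to align the index sets, one gets
\begin{equation*}
\sum_{K(3,n-5)} F_{k_1}T_{k_2}P_{k_3} = \sum_{j=0}^{n-3} P_j T_{n-3-j} - \sum_{j=0}^{n-3} P_j F_{n-3-j},
\end{equation*}
valid for $n\geq 5$ (both sides being $0$ for smaller $n$, which should be checked separately).

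Next I would insert the known evaluations of the two double convolutions on the right. For the first, identity~\eqref{PT=P+P-P-P-T-T+P+P} of Corollary~\ref{cor:conv_FQ_FP_TP}, with $n$ replaced by $n-3$, gives
\begin{equation*}
\sum_{j=0}^{n-3} P_j T_{n-3-j} = \tfrac12\big(P_n + P_{n-2} + P_{n-4} - T_n - T_{n-2}\big).
\end{equation*}
For the second, since $\sum_{j=0}^{n-3} P_j F_{n-3-j} = \sum_{j=0}^{n-3} F_j P_{n-3-j}$, identity~\eqref{FP=P+P=F} with $n$ replaced by $n-3$ yields
\begin{equation*}
\sum_{j=0}^{n-3} P_j F_{n-3-j} = \tfrac12\big(P_{n-1} + P_{n-4} - F_{n-1}\big).
\end{equation*}

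Finally I would subtract the second from the first: the $P_{n-4}$ terms cancel, leaving
\begin{equation*}
\sum_{K(3,n-5)} F_{k_1}T_{k_2}P_{k_3} = \tfrac12\big(P_n - P_{n-1} + P_{n-2} - T_n - T_{n-2} + F_{n-1}\big),
\end{equation*}
which is the claimed formula. There is no genuine obstacle here; the only thing requiring care is the bookkeeping of the index shifts (making sure $K(3,n-2)$ becomes $K(3,n-5)$ forces the substitution $n\mapsto n-3$ consistently in both summands and in the cited closed forms) and a brief check of the small-$n$ base cases so that the stated range of validity is correct.
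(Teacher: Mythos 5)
Your proposal is correct and is exactly the paper's proof: the paper also proves this by applying Theorem \ref{th:x^mF^mF^m+1F^p} with $m=2$ and $p=5$, passing to power series, and substituting \eqref{FP=P+P=F} and \eqref{PT=P+P-P-P-T-T+P+P}; your index shift $n\mapsto n-3$ and the cancellation of the $P_{n-4}$ terms reproduce the stated right-hand side.
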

\begin{proof}
Use Theorem \ref{th:x^mF^mF^m+1F^p} with $m=2$ and $p=5$, pass to power series, apply \eqref{FP=P+P=F} and \eqref{PT=P+P-P-P-T-T+P+P},
and simplify.
\end{proof}

In the proofs of Theorem \ref{th:FQP_conv} and Theorem \ref{th:FTP_conv} we used different approach.
Instead of utilizing Theorem \ref{main_thm_mult}, we applied Theorem \ref{th:x^mF^mF^m+1F^p}. This is necessary here,
since, otherwise, the obtained formula would produce an identity with multiple triple convolution terms instead of one.

We finish this part of the article by showing that one can use a completely different approach to prove the results in this section.
Namely, we can deduce the fundamental functional equations from other relations of the respective generating functions.

\begin{remark}
	Notice that one can prove Theorem \ref{th:FQP_conv} and Theorem \ref{th:FTP_conv} using other transformations of generating functions.
	For the first theorem, we can use the following identities:
    $$P(x)-Q(x)=x^4P(x)Q(x)\qquad \text{and}\qquad Q(x)=\frac{F(x)}{1+x^2-xF(x)},$$
    where the formula for $Q$ is substituted to the right-hand-side presence of $Q$ in the first identity.
  For the second theorem, we start from the identity
    $$\frac{x}{P(x)}=\frac{x}{F(x)}+\frac{x^4}{F(x)}-2x^3$$
    to obtain
    \begin{equation}\label{eq:P-F=2x2FP-x3P}
        P(x)-F(x)=2x^2F(x)P(x)-x^3P(x).
    \end{equation}
    Then we substitute $F(x)=\frac{T(x)}{1+x^2T(x)}$ and obtain a functional equation.
\end{remark}

\subsection{Convolutions of four Fibonacci $m$-step sequences}

The functional equations from Theorem \ref{main_thm_mult} (as well as any other functional equations derived in this paper)
can be multiplied by arbitrary $F^{(r)}(x)$ or even by any product of the form
\begin{equation*}
\prod_{u=1}^N \left (F^{(r_u)}(x)\right )^{s_u}, \qquad s_1,\ldots,s_N, r_1,\ldots,r_N \,\,\mbox{positive integers}.
\end{equation*}
In particular cases these functional equations can be used for a different proof of Theorems \ref{thrm:FTQ=Q+Q-T+F}--\ref{th:FTP_conv}.
For example, using \eqref{eq:FmFm+1Fm+2=FmFM+2-Fm+1+Fm} we immediately get the following.
\begin{theorem}
For any $m\geq 1$ we have
\begin{align}
& x^{2m+1} F^{(m)}(x)F^{(m+1)}(x)F^{(m+2)}(x) F^{(m+3)}(x)  \nonumber \\
& \qquad \qquad = x^m F^{(m)}(x)F^{(m+2)}(x) F^{(m+3)}(x) - F^{(m+1)}(x)F^{(m+3)}(x) + F^{(m)}(x)F^{(m+3)}(x).\label{eq:quad_conv_consecutive}
\end{align}
\end{theorem}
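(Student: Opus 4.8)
The plan is to derive \eqref{eq:quad_conv_consecutive} directly from the particular case \eqref{eq:FmFm+1Fm+2=FmFM+2-Fm+1+Fm} of Theorem \ref{main_thm_mult}. Recall that \eqref{eq:FmFm+1Fm+2=FmFM+2-Fm+1+Fm} asserts
\begin{equation*}
x^{2m+1} F^{(m)}(x)F^{(m+1)}(x)F^{(m+2)}(x) = x^m F^{(m)}(x)F^{(m+2)}(x) - F^{(m+1)}(x) + F^{(m)}(x),
\end{equation*}
an identity of formal power series valid for every $m\geq 1$. Multiplying both sides by the generating function $F^{(m+3)}(x)$ — which is legitimate since we work in the ring of formal power series, where multiplication is well defined and distributes over the difference on the right-hand side — produces exactly \eqref{eq:quad_conv_consecutive}. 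No convergence or coefficient-extraction argument is needed at this stage; the functional identity follows from a single multiplication.

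If one wishes to record the corresponding convolution identity rather than only the functional equation, the next step would be to pass to power series on both sides of \eqref{eq:quad_conv_consecutive} and compare coefficients of $x^n$. The left-hand side then contributes a shifted fourfold convolution $\sum_{K(4,n-2m-1)} F^{(m)}_{k_1} F^{(m+1)}_{k_2} F^{(m+2)}_{k_3} F^{(m+3)}_{k_4}$, while the right-hand side yields a shifted triple convolution of $F^{(m)}, F^{(m+2)}, F^{(m+3)}$ together with two ordinary (twofold) convolutions. The only mildly delicate point is the bookkeeping of the index shifts coming from $x^{2m+1}$ and $x^{m}$, and stating the summation ranges so that the identity holds for all $n$ (with empty sums for small $n$), exactly as in the proof of Theorem \ref{th:general_conv_m_step}.

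The main difficulty is therefore essentially cosmetic: there is no substantive obstacle, since the statement is obtained from an already-proved functional equation by multiplication with a generating function, in line with the remark preceding the theorem that any functional equation in the paper may be multiplied by an arbitrary product of the $F^{(r)}(x)$. In particular, the same recipe applied instead to the general functional equation of Theorem \ref{main_thm_mult}, or to the equations in Theorem \ref{th:x^mF^mF^m+1F^p}, would yield a whole family of four- and higher-fold convolution identities; \eqref{eq:quad_conv_consecutive} is simply the cleanest representative, corresponding to four consecutive step parameters.
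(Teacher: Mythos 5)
Your proof is correct and is exactly the paper's argument: the theorem is stated there as an immediate consequence of multiplying the functional equation \eqref{eq:FmFm+1Fm+2=FmFM+2-Fm+1+Fm} by $F^{(m+3)}(x)$, just as you do. The additional remarks about coefficient extraction are fine but not needed for the statement as given.
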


This theorem implies a particular closed formula for the convolution of Fibonacci, Tribonacci, Tetranacci and Pentanacci numbers.
\begin{theorem}
  We have
	\begin{equation*}
  \sum\limits_{K(4,n-5)}F_{k_1}T_{k_2}Q_{k_3}P_{k_4}=\frac{1}{2}\big(P_{n+4}-P_{n+3}+P_{n+2}+T_{n+3}+T_{n+1}-F_n\big)-Q_{n+3}-Q_{n+1}.
	\end{equation*}
\end{theorem}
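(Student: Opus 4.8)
The plan is to specialize the functional equation \eqref{eq:quad_conv_consecutive} to $m=2$, using $F^{(2)}=F$, $F^{(3)}=T$, $F^{(4)}=Q$, $F^{(5)}=P$, which turns it into
\begin{equation*}
x^5 F(x)T(x)Q(x)P(x) = x^2 F(x)Q(x)P(x) - T(x)P(x) + F(x)P(x).
\end{equation*}
First I would pass to power series and compare the coefficients of $x^n$: the left-hand side contributes $\sum_{K(4,n-5)} F_{k_1}T_{k_2}Q_{k_3}P_{k_4}$, while the right-hand side contributes $\sum_{K(3,n-2)} F_{k_1}Q_{k_2}P_{k_3} - \sum_{j=0}^n T_j P_{n-j} + \sum_{j=0}^n F_j P_{n-j}$.

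Next I would insert the three closed forms already available. For the triple sum, Theorem~\ref{th:FQP_conv} with $n$ replaced by $n+3$ gives
\begin{equation*}
\sum_{K(3,n-2)} F_{k_1}Q_{k_2}P_{k_3} = -Q_{n+3}-Q_{n+1}+\frac{1}{2}\big(P_{n+4}+P_{n+1}-F_{n+4}\big)+F_{n+3}.
\end{equation*}
For the Tribonacci--Pentanacci convolution, identity \eqref{PT=P+P-P-P-T-T+P+P} gives $\sum_{j=0}^n T_j P_{n-j} = \frac{1}{2}(P_{n+3}+P_{n+1}+P_{n-1}-T_{n+3}-T_{n+1})$, and for the Fibonacci--Pentanacci convolution, identity \eqref{FP=P+P=F} gives $\sum_{j=0}^n F_j P_{n-j} = \frac{1}{2}(P_{n+2}+P_{n-1}-F_{n+2})$.

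It then remains to collect the terms of each type. The Pentanacci contributions telescope down to $\frac{1}{2}(P_{n+4}-P_{n+3}+P_{n+2})$, the Tribonacci contributions give $\frac{1}{2}(T_{n+3}+T_{n+1})$, and the Tetranacci contributions give $-Q_{n+3}-Q_{n+1}$. The Fibonacci contributions amount to $-\frac{1}{2}F_{n+4}+F_{n+3}-\frac{1}{2}F_{n+2}$, which I expect to be the only mildly fiddly step: expanding $F_{n+4}=F_{n+3}+F_{n+2}$ and then $F_{n+3}=F_{n+2}+F_{n+1}$ collapses this to $-\frac{1}{2}F_n$. Assembling the four blocks yields the claimed identity, valid for all $n\geq 0$.
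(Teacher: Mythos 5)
Your proposal is correct and is essentially the paper's own proof: specialize \eqref{eq:quad_conv_consecutive} to $m=2$, pass to power series, and substitute the closed forms \eqref{eq:conv_FQP}, \eqref{PT=P+P-P-P-T-T+P+P}, and \eqref{FP=P+P=F}. Your explicit bookkeeping (including the reduction of $-\tfrac{1}{2}F_{n+4}+F_{n+3}-\tfrac{1}{2}F_{n+2}$ to $-\tfrac{1}{2}F_n$) just spells out the "simplify" step the paper leaves implicit.
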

\begin{proof}
    Set $m=2$ in Equation \eqref{eq:quad_conv_consecutive} to get
    $$x^5F(x)T(x)Q(x)P(x)=x^2F(x)Q(x)P(x)-T(x)P(x)+F(x)P(x).$$
    Pass to power series, use \eqref{eq:conv_FQP}, \eqref{PT=P+P-P-P-T-T+P+P} and \eqref{FP=P+P=F} and simplify.
\end{proof}

\begin{remark}
We note that identity \eqref{eq:quad_conv_consecutive} can be (after multiplication by $x^2$) further simplified
using Theorem \ref{th:x^mF^mF^m+1F^p} so that the right-hand-side contains convolution sums of two sequences.
\end{remark}

Other interesting identities involving Fibonacci, Tribonacci, Tetranacci and Pentanacci numbers can be derived.
To be particular, the following hold true.
\begin{align*}
x^4 F(x) T(x) Q(x) P(x) &= F(x)T(x) x^4Q(x)P(x) \\
&=F(x) T(x) (P(x) - Q(x)) \\
&=F(x) T(x) P(x) - x F(x) T(x) Q(x),
\end{align*}
or
\begin{align}
x^6 F(x) T(x) Q(x) P(x) = x^2 F(x)T(x) x^4 Q(x)P(x)= (T(x) - F(x)) (P(x) - Q(x)).\label{eq:FTQP=(T-F)(P-Q)}
\end{align}
The last identity leads to the following, surprising identity.

\begin{theorem}\label{thrm:conv_quad_()()}
    For any $m\geq 1$ we have
   \begin{equation*}
	\sum\limits_{K(4,n-2m-2)}F^{(m)}_{k_1}F^{(m+1)}_{k_2}F^{(m+2)}_{k_3}F^{(m+3)}_{k_3}
		=\sum\limits_{j=0}^n\big(F^{(m+3)}_j-F^{(m+2)}_j\big)\big(F^{(m+1)}_{n-j}-F^{(m)}_{n-j}\big).
	\end{equation*}
\end{theorem}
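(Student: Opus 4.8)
The plan is to promote the $m=2$ generating-function identity \eqref{eq:FTQP=(T-F)(P-Q)} to arbitrary $m\geq 1$ and then read off the claimed convolution by comparing coefficients of $x^n$, exactly in the spirit of the earlier proofs in this section. The only ingredient needed is equation \eqref{eq:F^m+1-F^m-x^mF^m+1F^m}, i.e.\ the case $b=1$ of \eqref{Eq:Fab-Fa=xFaFab}.

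First I would apply \eqref{eq:F^m+1-F^m-x^mF^m+1F^m} twice: once as stated,
\[
F^{(m+1)}(x)-F^{(m)}(x)=x^m F^{(m)}(x)F^{(m+1)}(x),
\]
and once with $m$ replaced by $m+2$,
\[
F^{(m+3)}(x)-F^{(m+2)}(x)=x^{m+2} F^{(m+2)}(x)F^{(m+3)}(x).
\]
Multiplying these two relations, the product of the left-hand sides is precisely the generating function of the right-hand side of the theorem, while on the right the powers of $x$ combine as $x^m\cdot x^{m+2}=x^{2m+2}$, yielding the functional equation
\[
x^{2m+2}\,F^{(m)}(x)F^{(m+1)}(x)F^{(m+2)}(x)F^{(m+3)}(x)=\bigl(F^{(m+1)}(x)-F^{(m)}(x)\bigr)\bigl(F^{(m+3)}(x)-F^{(m+2)}(x)\bigr).
\]
For $m=2$ this is exactly \eqref{eq:FTQP=(T-F)(P-Q)}.

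Then I would pass to power series and compare coefficients of $x^n$. On the left the factor $x^{2m+2}$ shifts the index, so $[x^n]$ produces $\sum_{K(4,n-2m-2)}F^{(m)}_{k_1}F^{(m+1)}_{k_2}F^{(m+2)}_{k_3}F^{(m+3)}_{k_4}$; on the right, the Cauchy product of the power series $F^{(m+1)}(x)-F^{(m)}(x)$ and $F^{(m+3)}(x)-F^{(m+2)}(x)$ yields $\sum_{j=0}^{n}\bigl(F^{(m+3)}_j-F^{(m+2)}_j\bigr)\bigl(F^{(m+1)}_{n-j}-F^{(m)}_{n-j}\bigr)$, the order of the two factors in the convolution being immaterial. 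For $n<2m+2$ both sides vanish, which is consistent with the fact that the two difference sequences agree on their initial segments; this also explains why the unrestricted range $0\le j\le n$ may be used on the right-hand side.

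There is no genuinely hard step here: the whole content lies in recognizing that the quadruple product factors as a product of two ``adjacent difference'' pairs, after which \eqref{eq:F^m+1-F^m-x^mF^m+1F^m} does all the work. The only points needing a little care are the bookkeeping of the index shift $2m+2$ and checking that the full convolution range on the right is harmless. (One could alternatively derive the same functional equation by manipulating \eqref{eq:quad_conv_consecutive} together with Theorem \ref{th:x^mF^mF^m+1F^p}, but the two-fold application of \eqref{eq:F^m+1-F^m-x^mF^m+1F^m} above is the most direct route.)
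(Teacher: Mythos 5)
Your proposal is correct and is essentially the paper's own argument: the paper likewise factors $x^{2m+2}F^{(m)}(x)F^{(m+1)}(x)F^{(m+2)}(x)F^{(m+3)}(x)$ as $\big(x^mF^{(m)}(x)F^{(m+1)}(x)\big)\big(x^{m+2}F^{(m+2)}(x)F^{(m+3)}(x)\big)$, applies \eqref{eq:F^m+1-F^m-x^mF^m+1F^m} to each pair, and compares coefficients. The only differences are cosmetic: you spell out the index shift and the harmlessness of the full range $0\le j\le n$, which the paper leaves implicit (and your displayed right-hand side even corrects a copy-paste slip in the paper's proof, where the same factor is written twice).
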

\begin{proof}
Following identity \eqref{eq:FTQP=(T-F)(P-Q)} and using \eqref{eq:F^m+1-F^m-x^mF^m+1F^m} we have
\begin{align*}
    &x^{2m+2}F^{(m)}(x)F^{(m+1)}(x)F^{(m+2)}(x)F^{(m+3)}(x) \\
    &\qquad\qquad=\big(x^mF^{(m)}(x)F^{(m+1)}(x)\big)\big(x^{m+2}F^{(m+2)}(x)F^{(m+3)}(x)\big)\\
    &\qquad\qquad=\big(F^{(m+1)}(x)-F^{(m)}(x)\big)\big(F^{(m+1)}(x)-F^{(m)}(x)\big)
\end{align*}
and the identity appears after passing to power series.
\end{proof}

\section{Concluding results and comments}

\subsection{Higher order convolutions}

Identities for even higher order convolutions can also be derived by mimicking the argument concluding previous section.
For instance, Theorem~\ref{thrm:conv_quad_()()} has the following generalization.
\begin{theorem}\label{th:general_conv_lots_of_seq}
  Fix $\ell\geq 1$ and an integer $m\geq 1$. Then
   \begin{equation}\label{eq:general_conv_v2_even}
   \sum\limits_{K(2\ell,n-\ell(m+\ell-1))}\prod\limits_{j=0}^{2\ell-1} F_{k_{j+1}}^{(m+j)}
	 = \sum\limits_{K(\ell,n)}\prod\limits_{j=0}^{\ell-1} \big(F^{(m+2j+1)}_{k_{j}}-F^{(m+2j)}_{k_{j}}\big)
   \end{equation}
   and
    \begin{equation}\label{eq:general_conv_v2_odd}
    \sum\limits_{K(2\ell+1,n-\ell(m+\ell-1))}\prod\limits_{j=0}^{2\ell} F_{k_j}^{(m+j)}=
    \sum\limits_{K(\ell+1,n)}F_{k_{\ell+1}}^{(m+2\ell)}\left[\prod\limits_{j=0}^{\ell-1} \big(F^{(m+2j+1)}_{k_{j}}-F^{(m+2j)}_{k_{j}}\big)\right].
    \end{equation}
\end{theorem}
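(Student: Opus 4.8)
The plan is to derive both identities directly from the single pairing relation \eqref{eq:F^m+1-F^m-x^mF^m+1F^m}, which I will use in the form $x^{k}F^{(k)}(x)F^{(k+1)}(x)=F^{(k+1)}(x)-F^{(k)}(x)$, applied to $\ell$ disjoint pairs of consecutive superscripts.

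First I would handle the even case \eqref{eq:general_conv_v2_even}. Group the $2\ell$ generating functions $F^{(m)}(x),F^{(m+1)}(x),\dots,F^{(m+2\ell-1)}(x)$ into the consecutive pairs indexed by $j=0,1,\dots,\ell-1$, the $j$-th pair being $\bigl(F^{(m+2j)},F^{(m+2j+1)}\bigr)$. Applying \eqref{eq:F^m+1-F^m-x^mF^m+1F^m} to the $j$-th pair gives $x^{m+2j}F^{(m+2j)}(x)F^{(m+2j+1)}(x)=F^{(m+2j+1)}(x)-F^{(m+2j)}(x)$. Multiplying these $\ell$ equations and collecting the power of $x$, namely $\sum_{j=0}^{\ell-1}(m+2j)=\ell m+\ell(\ell-1)=\ell(m+\ell-1)$, yields the functional identity
\[
x^{\ell(m+\ell-1)}\prod_{j=0}^{2\ell-1}F^{(m+j)}(x)=\prod_{j=0}^{\ell-1}\bigl(F^{(m+2j+1)}(x)-F^{(m+2j)}(x)\bigr).
\]
Passing to power series and comparing the coefficients of $x^n$ then gives \eqref{eq:general_conv_v2_even}: the factor $x^{\ell(m+\ell-1)}$ on the left shifts the index so that the coefficient is the convolution over $K(2\ell,\,n-\ell(m+\ell-1))$, while the coefficient of $x^n$ in a product of $\ell$ power series is the convolution over $K(\ell,n)$ of the differences $F^{(m+2j+1)}_{k}-F^{(m+2j)}_{k}$. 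For the odd case \eqref{eq:general_conv_v2_odd} I would perform the same pairing on the first $2\ell$ functions $F^{(m)},\dots,F^{(m+2\ell-1)}$, leaving $F^{(m+2\ell)}(x)$ untouched, and multiply the resulting identity by $F^{(m+2\ell)}(x)$ to obtain
\[
x^{\ell(m+\ell-1)}\prod_{j=0}^{2\ell}F^{(m+j)}(x)=F^{(m+2\ell)}(x)\prod_{j=0}^{\ell-1}\bigl(F^{(m+2j+1)}(x)-F^{(m+2j)}(x)\bigr),
\]
and comparing coefficients of $x^n$ finishes the proof, the right-hand side now being a convolution over $K(\ell+1,n)$ with the extra free factor $F^{(m+2\ell)}_{k_{\ell+1}}$.

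There is no genuine obstacle here; the argument is just iterated use of \eqref{eq:F^m+1-F^m-x^mF^m+1F^m}. The one place that needs care is the bookkeeping: one must verify that the accumulated exponent $\ell(m+\ell-1)$ matches exactly the shift appearing in $K(2\ell,\,n-\ell(m+\ell-1))$, and one should reconcile the index labels on the two sides (the product on the right runs over $j=0,\dots,\ell-1$ while $K(\ell,n)$ is written with subscripts starting at $1$, but in both cases the sum is simply over all compositions of $n$ into the appropriate number of nonnegative parts). As sanity checks, $\ell=1$ in \eqref{eq:general_conv_v2_even} recovers (a relabeling of) Theorem \ref{th:general_conv_m_step}, $\ell=2$ recovers Theorem \ref{thrm:conv_quad_()()}, and $\ell=1$ in \eqref{eq:general_conv_v2_odd} recovers the relevant special case of Theorem \ref{th:x^mF^mF^m+1F^p}. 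Alternatively one could run an induction on $\ell$ using the already-established functional equations, but the direct pairing is cleaner and makes the structure transparent.
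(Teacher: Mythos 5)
Your proposal is correct and follows essentially the same route as the paper: pair consecutive generating functions, apply the relation $x^{a}F^{(a)}(x)F^{(a+1)}(x)=F^{(a+1)}(x)-F^{(a)}(x)$ to each of the $\ell$ pairs, multiply to get the functional equation with total exponent $\ell(m+\ell-1)$, and compare coefficients (with the odd case obtained by carrying the extra factor $F^{(m+2\ell)}(x)$ along). The bookkeeping of the exponent and the index conventions is handled correctly, so nothing further is needed.
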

\begin{proof}
    We sketch the proof of \eqref{eq:general_conv_v2_even}. Using \eqref{Eq:Fab-Fa=xFaFab}, we can write the following functional equation.
    \begin{align*}
        &x^{\ell(m+\ell-1)}F^{(m)}(x)\cdots F^{(m+2\ell-1)}(x)\\
        &\quad =\big(x^m F^{(m)}(x)F^{(m+1)}(x)\big)\cdots x^{m+2\ell-2}\big(x^m F^{(m+2\ell-2)}(x)F^{(m+2\ell-1)}(x)\big)\\
        &\quad=\big(F^{(m+1)}(x)-F^{(m)}(x)\big)\cdots \big(F^{(m+2\ell-1)}(x)-F^{(m+2\ell-2)}(x)\big)
    \end{align*}
    Passing to power series we obtain the desired formula.
\end{proof}
Theorem \ref{th:general_conv_lots_of_seq}, in  particular, allows us to effectively cut off half of the sequences from the initial convolution.
For instance, it is sufficient to know the exact formula for the convolution of three among the Fibonacci to Hexanacci and Heptanacci numbers
(so $2\leq m\leq 7$) to know the expression for the convolution of all of the sequences.

In Theorem \ref{th:general_conv_lots_of_seq} in identity \eqref{eq:general_conv_v2_odd} the sequence $F^{(m+2\ell)}_n$ does not take a part in reducing the order of convolution. It is clear that another sequence can be distinguished in a similar way. The change results in different summation or product ranges, which are easily adjustable for a specific example. This implies, for instance, the following.
\begin{corollary}
    We have
    \begin{align*}
        \sum\limits_{K(3,n-2)} F_{k_1}T_{k_2}Q_{k_3} &= \sum\limits_{j=0}^n(T_j-F_j)Q_{n-j}=\sum\limits_{j=0}^{n+1}F_{n+1-j}(Q_j-T_j).
    \end{align*}
\end{corollary}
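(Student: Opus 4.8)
The plan is to obtain the stated double identity from the corresponding functional equation for the generating functions, exactly in the spirit of Theorem~\ref{thrm:conv_quad_()()} and Theorem~\ref{th:general_conv_lots_of_seq}. The starting point is the special case $m=2$ of identity \eqref{eq:FmFm+1Fm+2=FmFM+2-Fm+1+Fm}, namely
\begin{equation*}
x^{5} F(x)T(x)Q(x) = x^{2} F(x)Q(x) - T(x) + F(x),
\end{equation*}
but instead of expanding the right-hand side directly I would rewrite it so that the order of convolution on the right is two rather than one. Using \eqref{eq:F^m+1-F^m-x^mF^m+1F^m} with $m=2$ in the form $T(x)-F(x)=x^{2}F(x)T(x)$, the right-hand side $x^{2}F(x)Q(x)-(T(x)-F(x))$ can be massaged; alternatively, and more directly, I would start from the factorization
\begin{equation*}
x^{2}F(x)T(x)Q(x) = \big(x^{2}F(x)T(x)\big)Q(x) = \big(T(x)-F(x)\big)Q(x),
\end{equation*}
which is an instance of Theorem~\ref{th:x^mF^mF^m+1F^p} (or of \eqref{eq:F^m+1-F^m-x^mF^m+1F^m}) after multiplying by $Q(x)$. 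This already yields the first equality $\sum_{K(3,n-2)}F_{k_1}T_{k_2}Q_{k_3}=\sum_{j=0}^{n}(T_j-F_j)Q_{n-j}$ upon passing to power series and comparing coefficients of $x^{n}$, since the left side contributes $x^{2}\sum_{n}\big(\sum_{K(3,n)}F_{k_1}T_{k_2}Q_{k_3}\big)x^{n}$.

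For the second equality I would use the same factorization but peel off a different pair: write
\begin{equation*}
x^{2}F(x)T(x)Q(x) = F(x)\big(x^{2}T(x)Q(x)\big) = F(x)\big(Q(x)-T(x)\big),
\end{equation*}
which is again Theorem~\ref{th:x^mF^mF^m+1F^p} applied to the pair $(T,Q)$ (steps $m=3$, $m+1=4$) after multiplying by $F(x)$; here one uses $x^{3}T(x)Q(x)=Q(x)-T(x)$ from \eqref{eq:F^m+1-F^m-x^mF^m+1F^m}, and then an extra factor of $x$ is absorbed to match the exponent $x^{2}$ on the left, which is why the index on the $F$-side shifts to $n+1-j$ and the summation runs to $n+1$. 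Comparing coefficients of $x^{n}$ then gives $\sum_{K(3,n-2)}F_{k_1}T_{k_2}Q_{k_3}=\sum_{j=0}^{n+1}F_{n+1-j}(Q_j-T_j)$. Combining the two computations yields the corollary.

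The only genuinely delicate point is bookkeeping of the powers of $x$ and the resulting index shifts: one must be careful that $x^{3}T(x)Q(x)=Q(x)-T(x)$ carries three powers of $x$ while the left-hand convolution sits behind $x^{2}F(x)T(x)Q(x)$, so multiplying the identity $x^{3}T(x)Q(x)=Q(x)-T(x)$ by $F(x)$ and then by $x^{-1}$ — i.e.\ rewriting $x^{2}F(x)T(x)Q(x)=x^{-1}F(x)(Q(x)-T(x))$ is not literally legal as a power series, so instead I would phrase it as $x^{2}F(x)\cdot x^{3}T(x)Q(x)=x^{3}F(x)(Q(x)-T(x))$ and then observe that the left side is $x^{3}\big(x^{2}F(x)T(x)Q(x)\big)$, so after cancelling $x^{3}$ from both sides one reads off $x^{2}F(x)T(x)Q(x)=x^{3}F(x)(Q(x)-T(x))\cdot x^{-3}$ at the level of coefficients, i.e.\ $\sum_{K(3,n)}F_{k_1}T_{k_2}Q_{k_3}$ equals the coefficient of $x^{n+2}$ in $x^{3}F(x)(Q(x)-T(x))$, namely $\sum_{j}F_{n+1-j}(Q_j-T_j)$. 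Getting these shifts exactly right (and checking the lower summation limits, which are automatic because $F_0=0$ and the $x$-powers force the ranges) is the main thing to verify carefully; everything else is a routine comparison of coefficients.
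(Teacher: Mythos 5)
Your overall route is the same as the paper's: both equalities are obtained by peeling off an adjacent pair of sequences via \eqref{eq:F^m+1-F^m-x^mF^m+1F^m} (equivalently, Theorem \ref{th:x^mF^mF^m+1F^p}); the paper packages the first equality as the case $\ell=1$, $m=2$ of \eqref{eq:general_conv_v2_odd} and gets the second by distinguishing the other sequence, which is exactly your ``peel off a different pair'' idea. Your proof of the first equality, based on $x^{2}F(x)T(x)Q(x)=\bigl(T(x)-F(x)\bigr)Q(x)$ and comparison of the coefficients of $x^{n}$, is correct.

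The bookkeeping for the second equality, however --- precisely the point you flag as delicate --- fails as written. The displayed identity $x^{2}F(x)T(x)Q(x)=F(x)\bigl(x^{2}T(x)Q(x)\bigr)=F(x)\bigl(Q(x)-T(x)\bigr)$ is false, since $Q(x)-T(x)=x^{3}T(x)Q(x)$, not $x^{2}T(x)Q(x)$; and your corrected version $x^{2}F(x)\cdot x^{3}T(x)Q(x)=x^{3}F(x)\bigl(Q(x)-T(x)\bigr)$ is also off by one power of $x$, because multiplying $x^{3}T(x)Q(x)=Q(x)-T(x)$ by $x^{2}F(x)$ produces $x^{2}F(x)\bigl(Q(x)-T(x)\bigr)$ on the right. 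The final extraction is then internally inconsistent: the coefficient of $x^{n+2}$ in $x^{3}F(x)\bigl(Q(x)-T(x)\bigr)$ is $\sum_{j}F_{n-1-j}(Q_j-T_j)$, not $\sum_{j}F_{n+1-j}(Q_j-T_j)$, and neither quantity equals $\sum_{K(3,n)}F_{k_1}T_{k_2}Q_{k_3}$; you land on the correct formula only because two off-by-one slips happen to point in opposite directions. The repair is immediate and requires no negative powers of $x$: Theorem \ref{th:x^mF^mF^m+1F^p} with $m=3$, $p=2$ (or simply multiplying $x^{3}T(x)Q(x)=Q(x)-T(x)$ by $F(x)$) gives
\begin{equation*}
F(x)\bigl(Q(x)-T(x)\bigr)=x^{3}F(x)T(x)Q(x),
\end{equation*}
and comparing the coefficients of $x^{n+1}$ on both sides yields $\sum_{j=0}^{n+1}F_{n+1-j}(Q_j-T_j)=\sum_{K(3,n-2)}F_{k_1}T_{k_2}Q_{k_3}$, which together with your first computation proves the corollary.
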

The above observation also leads to yet another look at the ''switch'' effect described in Section 2. Namely, this effect turns out to be the two different versions of the simplified  convolution sum of three sequences.

Finally, we also note that identities \label{eq:general_conv_v2_even} and \label{eq:general_conv_v2_odd} can be stated in a more general setting. Namely, the left-hand-side of either identity can be any finite product of the terms
$$F^{(m)}_{a}F^{(m+1)}_{b},$$
even with repetitions, and the general formula can be adjusted for that case as well.
We leave the derivation of such a convoluted formula to the reader.

\subsection{Some general cases of convolution of two sequences}

In Section 2, we derived convolution identities that involve all pairs of sequences up to $m=5$.
However, following the proof of Corollary \ref{cor:conv_FQ_FP_TP} we can do more.
In fact, we can deliver the general algorithm for finding a simple and closed form of the convolution of $F^{(m)}$ with $F^{(m+p)}$
in the following cases.\\
\indent Case 1: $p|m$.\\
\indent Case 2: $p|m+1$.\\
\indent Case 3: $p=2m+2$.

We start with Case 1. So let $m=\ell \cdot p$ for some integer $\ell\geq 1$. Applying Theorem \ref{thm:gen_m+p_conv} we get
\begin{equation}\label{eq:Flp-Flp=xlpFF}
    F^{((\ell+1)p)}(x)-F^{(\ell\cdot p)}(x)
    =\left(x^{\ell\cdot p}+\cdots+x^{(\ell+1)p-1}\right)F^{(\ell\cdot p)}(x)F^{((\ell+1)p)}(x).
\end{equation}
Multiplying \eqref{eq:Flp-Flp=xlpFF} repeatedly by $x^p$ we stack up a total of $\ell$ equalities:
\begin{align*}
     x^pF^{((\ell+1)p)}(x)-x^pF^{(\ell\cdot p)}(x)&
     =\left(x^{(\ell+1)p}+\cdots+x^{(\ell+2)p-1}\right)F^{(\ell\cdot p)}(x)F^{((\ell+1)p)}(x),\\
     \vdots & \qquad \vdots\\
     x^{(\ell-1)p}F^{((\ell+1)p)}(x)-x^{(\ell-1)p}F^{(\ell\cdot p)}(x)&
     =\left(x^{(2\ell-1)p}+\cdots+x^{2\ell\cdot p-1}\right)F^{(\ell\cdot p)}(x)F^{((\ell+1)p)}(x).
\end{align*}
Adding everything up we get
\begin{equation}\label{eq:conv_p|m}
     \sum\limits_{j=0}^{\ell-1}x^{j\cdot p}\big(F^{((\ell+1)p)}(x)-F^{(\ell\cdot p)}(x)\big)=\left(x^{\ell\cdot p}+\cdots+x^{2\ell\cdot p-1}\right)F^{(\ell\cdot p)}(x)F^{((\ell+1)p)}(x).
\end{equation}

In the next part of our computation we use the following convention. Whenever we combine several sums into one with fixed summation range, the remaining terms that are not included in the combined sum are called \textit{other terms} and are referred as $o.t.$ Depending in the exact case, these terms can be explicitly derived. We do not do that in the below computation as it makes the formula presented in the algorithm difficult to follow. Thus, only the important terms are explicit.

We go back to \eqref{eq:conv_p|m} and rewrite this into power series to obtain

\begin{align*}\label{eq:conv_p|m__formula}
     \sum\limits_{j=0}^{\ell-1}\big(F^{((\ell+1)p)}_{n-j\cdot p}-F^{(\ell\cdot p)}_{n-j\cdot p}\big)&
     =\sum\limits_{k=0}^{n-(2\ell\cdot p-1)}F^{((\ell+1)p)}_k
     \left(\sum\limits_{j=0}^{\ell\cdot p-1}F^{(\ell\cdot p)}_{n-(2\ell\cdot p-1)-k-j}\right)+o.t.\\
     &=\sum\limits_{k=0}^{n-(2\ell\cdot p-1)}F^{((\ell+1)p)}_kF^{(\ell\cdot p)}_{n-(2\ell\cdot p)-k}+o.t.
\end{align*}
From this we can derive the closed formula for the desired convolution.

\begin{example}
    Set $m=4$ and $p=2$ and denote $s_n=F^{(6)}_n$. It follows that
    \begin{equation}\label{eq:conv_sum_sQ}
        s_{n-2}-Q_{n-2}+s_n-Q_n=\sum\limits_{j=0}^{n-7}s_jQ_{n-3-j}+o.t.
    \end{equation}
    It is now easy to find that
    $$o.t.=2s_{n-6}+s_{n-5},$$
    and this finishes the formula.

    Setting $m=p=2$ restores \eqref{eq:FQ=Q-F}.
\end{example}

We now move to Case 2: $p|m+1$. So, let $m=\ell\cdot p-1$ for some positive integer $\ell$. The further reasoning is similar to case $p|m$. Applying Theorem \ref{thm:gen_m+p_conv} we get
\begin{equation}\label{eq:Flp-Flp=xlpFF_version2}
    F^{((\ell+1)p-1)}(x)-F^{(\ell\cdot p-1)}(x)
    =\left(x^{\ell\cdot p-1}+\cdots+x^{(\ell+1)p-2}\right)F^{(\ell\cdot p-1)}(x)F^{((\ell+1)p-1)}(x).
\end{equation}
We multiply \eqref{eq:Flp-Flp=xlpFF_version2} again by $x^p$ to get
\begin{align*}
     x^pF^{((\ell+1)p-1)}(x)-x^pF^{(\ell\cdot p-1)}(x)&
     =\left(x^{(\ell+1)p-1}+\cdots+x^{(\ell+2)p-2}\right)F^{(\ell\cdot p)}(x)F^{((\ell+1)p)}(x),\\
     \vdots & \qquad \vdots\\
     x^{(\ell-1)p}F^{((\ell+1)p-1)}(x)-x^{(\ell-1)p}F^{(\ell\cdot p-1)}(x)&
     =\left(x^{(2\ell-1)p-1}+\cdots+x^{2\ell\cdot p-2}\right)F^{(\ell\cdot p-1)}(x)F^{((\ell+1)p-1)}(x).
\end{align*}
Adding everything up and passing to power series, we have
\begin{align*}
      \sum\limits_{j=0}^{\ell-1}\big(F^{((\ell+1)p-1)}_{n-j\cdot p}-F^{(\ell\cdot p)-1}_{n-j\cdot p}\big)&
     =\sum\limits_{k=0}^{n-(2\ell\cdot p-2)}F^{((\ell+1)p-1)}_k
     \left(\sum\limits_{j=0}^{\ell\cdot p-1}F^{(\ell\cdot p-1)}_{n-(2\ell\cdot p-2)-k-j}\right)+o.t.\\
     &=2\sum\limits_{k=0}^{n-(2\ell\cdot p-2)}F^{((\ell+1)p)-1}_kF^{(\ell\cdot p-1)}_{n-(2\ell\cdot p-1)-k}+o.t.
\end{align*}

\begin{example}
    Setting $p=3$ and $m=2$ we reproduce the proof of \eqref{FP=P+P=F}. Setting $p=2$ and $m=3$ we reproduce the identity \eqref{PT=P+P-P-P-T-T+P+P}.
\end{example}

Finally, consider Case 3 and $p=2m+2$. The key feature of this case is the following simple lemma.
\begin{lemma}
    For any $m\geq 2$ and any $n\geq 0$ we have
    \begin{equation}\label{eq:sum_F_n^m=4F_n+2m}
    \sum\limits_{k=0}^{2m+1}F_{n+k}^{(m)}=4F_{n+2m}^{(m)}.
    \end{equation}
\end{lemma}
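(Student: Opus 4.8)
The goal is the identity $\sum_{k=0}^{2m+1}F_{n+k}^{(m)}=4F_{n+2m}^{(m)}$ for all $m\geq 2$ and $n\geq 0$. The plan is to evaluate the left-hand sum by repeatedly collapsing consecutive blocks of $m$ terms using the defining recurrence $F_{j}^{(m)}=\sum_{i=1}^{m}F_{j-i}^{(m)}$, which says precisely that any $m$ consecutive terms sum to the term immediately following them. First I would split the range $\{n,n+1,\dots,n+2m+1\}$, which has $2m+2$ indices, so as to isolate two disjoint blocks of $m$ consecutive indices. The cleanest split is $\{n,\dots,n+m-1\}$ (a block of $m$, summing to $F_{n+m}^{(m)}$) together with $\{n+m+1,\dots,n+2m\}$ (another block of $m$, summing to $F_{n+2m+1}^{(m)}$), leaving the two leftover terms $F_{n+m}^{(m)}$ and $F_{n+2m+1}^{(m)}$. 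Hence the whole sum equals $2F_{n+m}^{(m)}+2F_{n+2m+1}^{(m)}$.

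Next I would rewrite both of these in terms of $F_{n+2m}^{(m)}$. For the second term, the recurrence at index $n+2m+1$ gives $F_{n+2m+1}^{(m)}=\sum_{i=1}^{m}F_{n+2m+1-i}^{(m)}=\sum_{k=n+m+1}^{n+2m}F_{k}^{(m)}=F_{n+2m}^{(m)}+\sum_{k=n+m+1}^{n+2m-1}F_k^{(m)}$; meanwhile the recurrence at index $n+2m$ gives $F_{n+2m}^{(m)}=\sum_{k=n+m}^{n+2m-1}F_k^{(m)}$, so $\sum_{k=n+m+1}^{n+2m-1}F_k^{(m)}=F_{n+2m}^{(m)}-F_{n+m}^{(m)}$. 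Substituting, $F_{n+2m+1}^{(m)}=2F_{n+2m}^{(m)}-F_{n+m}^{(m)}$. Therefore the sum becomes $2F_{n+m}^{(m)}+2\bigl(2F_{n+2m}^{(m)}-F_{n+m}^{(m)}\bigr)=4F_{n+2m}^{(m)}$, which is exactly the claim.

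One caveat is that the recurrence $F_j^{(m)}=\sum_{i=1}^m F_{j-i}^{(m)}$ is only valid for $j\geq 2$, with the initial values $F_1^{(m)}=1$ and $F_n^{(m)}=0$ for $n=-(m-2),\dots,0$ as in the paper; all the indices appearing above are of the form $n+m$ or larger with $n\geq 0$ and $m\geq 2$, so $j\geq m+1\geq 3\geq 2$ and the recurrence applies without boundary issues. Alternatively, and perhaps even more cleanly, one could instead split the range as the single block $\{n,\dots,n+m-1\}$ plus the remaining $m+2$ indices $\{n+m,\dots,n+2m+1\}$, collapse the first to $F_{n+m}^{(m)}$, and then observe that $F_{n+m}^{(m)}+\sum_{k=n+m}^{n+2m-1}F_k^{(m)}=F_{n+m}^{(m)}+F_{n+2m}^{(m)}$ followed by $F_{n+2m}^{(m)}+F_{n+2m+1}^{(m)}$ handled as above; either bookkeeping route gives the result. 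I do not expect a genuine obstacle here — the only thing to watch is keeping the index bookkeeping consistent, since the two ``leftover'' terms must be tracked carefully when the $2m+2$ indices are partitioned into blocks of size $m$.
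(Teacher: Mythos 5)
Your proof is correct and rests on essentially the same idea as the paper's: the only tool is the defining recurrence (any $m$ consecutive terms sum to the next one), used to regroup the $2m+2$ terms. The paper runs the identical bookkeeping in reverse, starting from $4F_{n+2m}^{(m)}=2F_{n+2m}^{(m)}+2F_{n+2m}^{(m)}$ and expanding into the full sum, whereas you collapse the left-hand side to $2F_{n+m}^{(m)}+2F_{n+2m+1}^{(m)}$ and then substitute $F_{n+2m+1}^{(m)}=2F_{n+2m}^{(m)}-F_{n+m}^{(m)}$; these are the same computation read in opposite directions.
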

\begin{proof}
    Write
    \begin{align*}
        4F_{n+2m}^{(m)}&=\big(2F_{n+m}^{(m)}+\cdots+2F_{n+2m-1}^{(m)}\big)+2F_{n+2m}^{(m)}\\
        &=2F_{n+m}^{(m)}+\big(2F_{n+m+1}^{(m)}+\cdots+2F_{n+2m}^{(m)}\big)\\
        &=F_m^{(m)}+\cdots+F_{n+m-1}^{(m)}+F_{n+m}^{(m)}\\
        &\qquad +F_{n+m+1}^{(m)}+\cdots+F_{n+2m}^{(m)}+F_{n+2m+1}^{(m)}.
    \end{align*}
\end{proof}
We now apply Theorem \ref{thm:gen_m+p_conv} to obtain
$$F^{3m+2}(x)-F^{(m)}(x)=\big(x^m+\cdots+x^{3m+1}\big) F^{(m)}(x)F^{(3m+2)}(x).$$
This implies, using \eqref{eq:sum_F_n^m=4F_n+2m}, that
\begin{align*}
    F_{n}^{(3m+2)}-F_n^{(m)}&=\sum\limits_{k=0}^{n-(3m+1)}F_{k}^{(3m+2)}\big(F_{n-(3m-1)-k}^{(m)}+\cdots+F_{n-m-k}^{(m)}\big)+o.t.\\
    &=4\sum\limits_{k=0}^{n-(3m+1)}F_{k}^{(3m+2)}F_{n-m-1-k}^{(m)}+o.t.
\end{align*}
\begin{example}
    Set $m=2$ and let $\mathcal{O}=F^{(8)}$. Then we have
    \begin{align*}
        \mathcal{O}_{n}-F_n&=4\sum\limits_{j=0}^{n-7}
\mathcal{O}_jF_{n-3-j}+\mathcal{O}_{n-3}+2\mathcal{O}_{n-4}+4\mathcal{O}_{n-5}+7\mathcal{O}_{n-6}\\
&=4\sum\limits_{j=0}^{n-3}\mathcal{O}_jF_{n-3-j}+\mathcal{O}_{n-3}-2\mathcal{O}_{n-4}-\mathcal{O}_{n-6}.
    \end{align*}
    It follows after some calculation that
    \begin{equation}\label{eq:conv_F&Oct}
        \sum\limits_{j=0}^{n}\mathcal{O}_kF_{n-j}=\frac{1}{4}\left(\mathcal{O}_{n+3}-\mathcal{O}_{n}+2\mathcal{O}_{n-1}+\mathcal{O}_{n-3}-F_{n+3}\right).
    \end{equation}
\end{example}

The methods provided to this point allow us to find a closed form of the convolution of two different sequences out of the set of Fibonacci, Tribonacci, Tetranacci and Pentanacci numbers. If we include Hexanacci numbers (i.e. $s=F^{(6)}$), then we can compute all convolutions but the convolution of Fibonacci and Hexanacci numbers. This is at first glance surprising, but in fact the presented algorithm does not allow us to deal with that case, even though we can compute the explicit form of $\sum_{j=0}^n F^{(20)}_j F^{(27)}_{n-j}$.

We now show how to find the sum $\sum_{j=0}^ns_jF_{n-j}$. In order to deal with this problem, we have to use another approach.

\begin{theorem}
    For $n\geq 3$ we have
    \begin{equation}\label{eq:conv_sF}
        \sum\limits_{j=0}^{n}s_jF_{n-j}=\frac{1}{5}\left(s_{n+3}+s_{n+1}-s_n+3s_{n-1}+s_{n-3}-F_{n+3}-F_{n+1}\right).
    \end{equation}
\end{theorem}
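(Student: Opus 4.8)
The plan is to abandon the functional-equation bookkeeping used so far and instead compute $\sum_{j=0}^n s_j F_{n-j}$, where $s(x)=F^{(6)}(x)$, directly from a partial fraction decomposition of $F(x)s(x)$. The reason the earlier method stalls here is visible already at the level of generating functions: subtracting $x/F(x)=1-x-x^2$ from $x/s(x)=1-x-x^2-x^3-x^4-x^5-x^6$ produces the factor $x^3+x^4+x^5+x^6=x^3(1+x+x^2+x^3)$, and $1+x+x^2+x^3=(1-x^4)/(1-x)$ is not a unit in $\mathbb{Z}[[x]]$, so ``solving for'' the convolution generating function does not terminate in a bounded window of terms.

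Concretely, since
\[
F(x)s(x)=\frac{x^2}{(1-x-x^2)(1-x-x^2-x^3-x^4-x^5-x^6)},
\]
I would look for polynomials $P$ (of degree $\le 1$) and $R$ (of degree $\le 5$) with $x^2=P(x)(1-x-x^2-x^3-x^4-x^5-x^6)+R(x)(1-x-x^2)$. Reducing $x^2$ and $1-x-\dots-x^6$ modulo $1-x-x^2$ and inverting the result (this is where the factor $5$, the discriminant of $1-x-x^2$, enters) gives $P(x)=-\tfrac15(3+x)$, and then dividing $5x^2+(3+x)(1-x-\dots-x^6)$ by $1-x-x^2$ gives $R(x)=\tfrac15(3+x+5x^2+2x^3+3x^4+x^5)$. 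All that really needs to be checked is the polynomial identity
\[
5x^2=-(3+x)(1-x-x^2-x^3-x^4-x^5-x^6)+(3+x+5x^2+2x^3+3x^4+x^5)(1-x-x^2),
\]
which yields
\[
5\,F(x)s(x)=\frac{-(3+x)}{1-x-x^2}+\frac{3+x+5x^2+2x^3+3x^4+x^5}{1-x-x^2-x^3-x^4-x^5-x^6}.
\]

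Next I would pass to power series using $1/(1-x-x^2)=F(x)/x$ and $1/(1-x-\dots-x^6)=s(x)/x$ (so that the coefficient of $x^n$ on the right-hand sides is $F_{n+1}$ and $s_{n+1}$, respectively), so that comparing coefficients of $x^n$ for $n\ge 3$, where every shifted lower-index term is unambiguous, gives
\[
5\sum_{j=0}^n s_jF_{n-j}=-\big(3F_{n+1}+F_n\big)+\big(3s_{n+1}+s_n+5s_{n-1}+2s_{n-2}+3s_{n-3}+s_{n-4}\big).
\]
It then remains to rewrite the two bracketed expressions with the pertinent recurrences: the elementary Fibonacci identity $F_{n+3}=2F_{n+1}+F_n$ gives $3F_{n+1}+F_n=F_{n+3}+F_{n+1}$, while inserting $s_{n+3}=s_{n+2}+s_{n+1}+s_n+s_{n-1}+s_{n-2}+s_{n-3}$ and then $s_{n+2}=s_{n+1}+s_n+s_{n-1}+s_{n-2}+s_{n-3}+s_{n-4}$ into $s_{n+3}+s_{n+1}-s_n+3s_{n-1}+s_{n-3}$ reproduces exactly $3s_{n+1}+s_n+5s_{n-1}+2s_{n-2}+3s_{n-3}+s_{n-4}$. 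Substituting these and dividing by $5$ yields \eqref{eq:conv_sF}.

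The only genuine obstacle is finding the decomposition in the first place — in particular realizing that a factor of $5$ is forced and pinning down $P$ and $R$ — together with a little care about which of $s_0,s_{-1},\dots$ vanish so that the stated bound $n\ge 3$ is the right one; once the polynomial identity above is in hand, the rest is a routine coefficient comparison.
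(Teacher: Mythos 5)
Your proposal is correct, and it takes a genuinely different route from the paper. The paper stays inside its own machinery: it applies the two-term convolution identity of Theorem \ref{thm:gen_m+p_conv} with $m=2$, $p=4$ twice (at $n$ and at $n+2$), adds the two relations, and then collapses the resulting mixture of shifted Fibonacci weights using the ad hoc identity \eqref{eq:F_j+...+F_j+5=5Fj+4}, which the authors admit was found by trial and error. You instead produce an explicit B\'ezout-type (partial fraction) decomposition: the polynomial identity
\begin{equation*}
5x^2=-(3+x)\bigl(1-x-x^2-x^3-x^4-x^5-x^6\bigr)+\bigl(3+x+5x^2+2x^3+3x^4+x^5\bigr)\bigl(1-x-x^2\bigr),
\end{equation*}
which I have checked and which splits $5F(x)s(x)$ into two single-sequence pieces; the rest is coefficient comparison plus the rewritings $3F_{n+1}+F_n=F_{n+3}+F_{n+1}$ and $3s_{n+1}+s_n+5s_{n-1}+2s_{n-2}+3s_{n-3}+s_{n-4}=s_{n+3}+s_{n+1}-s_n+3s_{n-1}+s_{n-3}$, both of which are correct consequences of the respective recurrences. (Two small remarks: with the paper's convention $s_{-4}=\cdots=s_0=0$ your coefficient identity in fact holds for all $n\geq 0$, so the restriction $n\geq 3$ is only needed to match the stated form; and calling $5$ ``the discriminant of $1-x-x^2$'' is a heuristic aside --- the constant is the normalizing factor from inverting $1-x-\cdots-x^6$ modulo $1-x-x^2$ --- but nothing in the proof depends on that interpretation.) What each approach buys: the paper's argument is thematically consistent with its general ``stack shifted copies and absorb via a sum identity'' algorithm and feeds into the discussion around \eqref{eq:general_identity_sums_arithmetic}, whereas your extended-Euclidean decomposition over $\mathbb{Q}[x]$ is more systematic --- it applies verbatim to any coprime pair of denominators, so in principle it also settles the question-mark entries of Table \ref{tab:check} (e.g.\ $\sum_j P_jF^{(9)}_{n-j}$), at the cost of larger numerators and less structural insight into why identities like \eqref{eq:F_j+...+F_j+5=5Fj+4} exist.
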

\begin{proof}
    Notice that by \eqref{Eq:Fab-Fa=xFaFab} or \eqref{eq:sumF^mF^m+p=F^m+p-F^m} we have
\begin{align*}
    s_n-F_n&=\sum\limits_{j=0}^{n-2}s_jF_{n-j-2}+\sum\limits_{j=0}^{n-3}s_jF_{n-j-3}+\sum\limits_{j=0}^{n-4}s_jF_{n-j-4}+\sum\limits_{j=0}^{n-5}s_jF_{n-j-5},\\
    s_{n+2}-F_{n+2}&=\sum\limits_{j=0}^{n}s_jF_{n-j}+\sum\limits_{j=0}^{n-1}s_jF_{n-j-1}+\sum\limits_{j=0}^{n-2}s_jF_{n-j-2}+\sum\limits_{j=0}^{n-3}s_jF_{n-j-3}.
\end{align*}
Summing up and rearranging we get
\begin{align*}
    s_{n+2}-F_{n+2}+s_n-F_n&=\sum\limits_{j=0}^{n-5}s_j(F_{n-j-5}+F_{n-j-4}+2F_{n-j-3}+2F_{n-j-2}+F_{n-j-1}+F_{n-j})\\
    &\quad\quad +s_{n-1}+2s_{n-2}+5s_{n-3}+9s_{n-4}.
\end{align*}
To proceed further, we apply the identity
\begin{equation}\label{eq:F_j+...+F_j+5=5Fj+4}
    F_n+F_{n+1}+2F_{n+2}+2F_{n+3}+F_{n+4}+F_{n+5}=5F_{n+4}
\end{equation}
valid for any $n\geq 0$ and we substitute $n\to n+1$ to obtain
\begin{align*}
    s_{n+3}-F_{n+3}+s_{n+1}-F_{n+1}&=\sum\limits_{j=0}^{n-5}5s_jF_{n-j-1}+s_{n-1}+2s_{n-2}+5s_{n-3}+9s_{n-4}\\
    &=5\sum\limits_{j=0}^ns_jF_{n-j}+s_n-3s_{n-1}-s_{n-3}.
\end{align*}
Thus, after minor adjustments, we have \eqref{eq:conv_sF}.
\end{proof}

We note that there are more cases where an identity similar to \eqref{eq:F_j+...+F_j+5=5Fj+4} leads to a closed sum formula.
Namely, if we consider $p+m\leq 8$ and let $S=F^{(7)}$ (Heptanacci numbers), then the only missing cases, not following from
the rules described by the three cases, are
$$
\sum\limits_{j=0}^{n} S_j F_{n-j},\qquad \sum\limits_{j=0}^{n}S_j Q_{n-j} \qquad\text{and}\qquad
\sum\limits_{j=0}^n \mathcal{O}_jT_{n-j}.
$$
These sums can be derived using an approach similar to \eqref{eq:conv_sF}, but this time with the aid of the following identities:
\begin{align}
\label{eq:weird_sums_1}    2 F_n + 2 F_{n+1} + 3 F_{n+2} + 3 F_{n+3} + 3 F_{n+4} + F_{n+5} + F_{ n+6} &= 11 F_{n+4},\\
    \label{eq:weird_sums_2}Q_n+Q_{n+1}+Q_{n+2}+Q_{n+3}+Q_{n+4}+2Q_{n+5}+2Q_{n+6}+2Q_{n+7}+Q_{n+8}&=3Q_{n+8},\\
    \label{eq:weird_sums_3}2T_n+2T_{n+1}+3T_{n+2}+5T_{n+3}+5T_{n+4}+3T_{n+5}+3T_{n+6}+2T_{n+7}&=11T_{n+6}.
\end{align}
To clarify how to use them, we write, for example,
\begin{align*}
    &Q_n+Q_{n+1}+Q_{n+2}+Q_{n+3}+Q_{n+4}+2Q_{n+5}+2Q_{n+6}+2Q_{n+7}+Q_{n+8}\\
    &\qquad =(Q_n+Q_{n+1}+Q_{n+2})+(Q_{n+3}+Q_{n+4}+Q_{n+5})\\
    &\qquad \qquad +(Q_{n+5}+Q_{n+6}+Q_{n+7})+(Q_{n+6}+Q_{n+7}+Q_{n+8})
\end{align*}
and it is clear that each bracket can be generated from the identity
$$S(x)-Q(x)=(x^4+x^5+x^6)Q(x)S(x).$$

\subsection{Open problems}

In the previous section we presented the algorithm for computation of the convolution sum of two sequences under (major) restrictions. We dealt with missing case $m=2$ and $p=4$ separately so that all convolution sums with $m+p\leq 8$ for $m\geq 2$ and $p\geq 1$ have their closed forms calculated. The first case that is not covered by our methods (that is, the case with smallest possible $m+p$ and smallest possible $p$) is the following convolution sum (see also Table \ref{tab:check}):
$$
\sum\limits_{j=0}^n P_j F^{(9)}_{n-j}.
$$
The trick that was used above could also work here but this does not replace a general approach to these sums (identities \eqref{eq:F_j+...+F_j+5=5Fj+4}--\eqref{eq:weird_sums_3} seem to only work in the presented form, we do not know if/how they generalize, as they were found by trial and error).
In our opinion, a good starting point is to search for an identity of the form
\begin{equation}\label{eq:general_identity_sums_arithmetic}
    \sum\limits_{k\in K}\sum\limits_{j=0}^{p-1}F_{n+j+k}^{(m)} = N\cdot F_{n+\ell}^{(m)}
\end{equation}
valid for any $n$, $m\geq 2$, with $N$ and $\ell$ being unknown, $K$ being a finite set, $\ell$ related to $n$ and $p$.
Identity \eqref{eq:F_j+...+F_j+5=5Fj+4} is the case $m=2$ and follows that pattern with $K=\{0,2\}$, $p=4$, $N=5$ and $\ell=4$. The identity \eqref{eq:sum_F_n^m=4F_n+2m} is the simplest example of that form, with $K=\{0\}$.
Any identity of the form \eqref{eq:general_identity_sums_arithmetic} would give us yet another convolution sum.
We believe that finding any other solution (or even an infinite family of solutions) to that equation is a good
motivation for further research in the topic.

\begin{small}
\begin{table}[h]
\begin{tabular}{@{}cccccccccc@{}}
\toprule
$m\ \backslash\ p$ & 2     & 3     & 4     & 5     & 6      & 7     & 8      & 9     \\ \midrule
2   & \eqref{FQ=Q+Q-F}    & \eqref{FP=P+P=F}  & \eqref{eq:conv_sF}    & \eqref{eq:weird_sums_1}      & \eqref{eq:conv_F&Oct} & ?    & ?     & ?    \\
3   & \eqref{PT=P+P-P-P-T-T+P+P} & $p|m$   & $p|m+1$ & \eqref{eq:weird_sums_3}    & ?     & ?    & $p=2m+2$ & ?    \\
4   & \eqref{eq:conv_sum_sQ}   & \eqref{eq:weird_sums_2}    & $p|m$   & $p|m+1$ & ?     & ?    & ?     & ?    \\
5   & $p|m+1$ & $p|m+1$ & ?    & $p|m$  & $p|m+1$  & ?    & ?     & ?    \\
6   & $p|m$  & $p|m$  & ?    & ?    & $p|m$   & $p|m+1$ & ?     & ?    \\
7   & $p|m+1$ & ?    & $p|m+1$ & ?    & ?     & $p|m$  & $p|m+1$  & ?    \\
8   & $p|m$  & $p|m+1$ & $p|m$  & ?    & ?     & ?    & $p|m$   & $p|m+1$ \\
9   & $p|m+1$ & $p|m$  & ?    & $p|m+1$ & ?     & ?    & ?     & $p|m$ \\
\bottomrule
\end{tabular}
\caption{Convolution sums $\sum F^{(m)}_jF^{(m+p)}_{n-j}$ with $2\leq m,p\leq 9$ covered directly or indirectly in this article. The cases $m=1$ and $p=1$ are covered by \eqref{eq:partial_sum_formuka} and \eqref{eq:simplest_conv_sum_in_the_article}, respectively. Question marks indicate unsolved cases.}
\label{tab:check}
\end{table}
\end{small}

\section{Conclusion}

This article was devoted to study convolutions involving Fibonacci $m$-step numbers.
We have applied the prominent generating function approach to prove several appealing results
that strengthen the understanding of these numbers. Many known identities for Fibonacci, Tribonacci,
Tetranacci and Pentanacci numbers now follow from our results as special cases. In addition,
mixed convolutions involving Fibonacci $m$-step numbers and Jacobsthal and Pell numbers were stated.
To keep things coherent and streamlined, we focused exclusively on Fibonacci $m$-step numbers.
There is still much work to be done. Identities for Lucas $m$-step numbers, Pell $m$-step numbers, and others,
and also mixed convolutions of these sequences can be studied in the future.

\makeatletter

\end{document}